\newtheorem {theorem}    {Theorem}[section]
\newtheorem {lemma}      [theorem]    {Lemma}
\newtheorem {proposition}[theorem]    {Proposition}
\numberwithin{equation}{section}
\newtheorem*{thmA}{Theorem A}
\newtheorem*{thmB}{Theorem B}
\newtheorem*{thmC1}{Theorem C1}
\newtheorem*{thmC2}{Theorem C2}
\newtheorem*{thmD1}{Theorem D1}
\newtheorem*{thmD2}{Theorem D2}
\title{Archimedean Zeta Integrals on $U(2,1)$}
\author{Dongwen Liu}
\address{Department of Mathematics, University of Connecticut, Storrs, CT 06269, USA}
\email{dongwen.liu@uconn.edu}
\subjclass[2010]{Primary 22E45; Secondary 11F27, 11F67}
\thanks{This work was partially supported by NSFC11201384.}
\begin{document}
\maketitle

\begin{abstract}
It is known that for a dual pair of unitary groups with equal size, zeta integrals arising from Rallis inner product formula give the central values of certain automorphic $L$-functions, which have applications to arithmetic. In this paper we explicitly calculate archimedean zeta integrals of this type for $U(2,1)$. In particular we compute the matrix coefficients of Weil representations using joint harmonics in the Fock model, and those of discrete series using Schmid operators.
\end{abstract}

\section{Introduction}

In the theory of automorphic forms, to obtain deep arithmetic applications it is often necessary to have effectively computable results locally at each place of a number field. In particular, at archimedean places we need some detailed calculations on the harmonic analysis of real Lie groups, and have to face certain combinatorial complexity.

In \cite{L1, L2} the author works out the local archimedean theory for cohomological representations and the connection to special values of $L$-functions in the stable range. The local theory of theta correspondence for unitary groups is developed in \cite{HKS} for $p$-adic fields and in \cite{P} for field of real numbers. It is made more precise in 
\cite{H1, H2} and applied to obtain results about special values of $L$-functions. The article \cite{HLS} outlines the general global theory of theta correspondence for cuspidal automorphic representations of unitary groups, in which the authors obtain a general formula for the local zeta integral on unitary groups and show that the central $L$-value is non-negative.

We shall explain the motivation and background of this paper by a brief review for above context, following \cite{HLS}. Let $F^+$ be a totally real number field, $F$ a totally imaginary quadratic extension of $F^+$, ${\bf A}={\bf A}_{F^+}$ the adele ring of $F^+$. Let $V$ (resp. $V'$) be a hermitian (resp. skew-hermitian) vector space of dimension $n$ over $F$, and $W=V\otimes_{F}V'$, a
symplectic space over $F^+$. Fixing an additive character $\psi$ and a complete polarization $W=X\oplus Y$, we have the Schr\"{o}dinger model of the
oscillator representation $\omega_\psi$ of $\widetilde{Sp}(W)({\bf A})$, realized on the space ${\mathcal S}(X({\bf A}))$ of Schwartz-Bruhat functions on $X({\bf A})$.
Let $G=U(V),$ $G'=U(V')$. By choosing a global splitting character $\chi$ of ${\bf A}_{F}^\times/F^\times$ (cf. \cite{HLS}), $\omega_\psi$ then defines a Weil representation $\omega_{V,V',\chi}$ of $G({\bf A})
\times G'({\bf A})$ on ${\mathcal S}(X({\bf A}))$. As usual, for $\Phi\in {\mathcal S}(X({\bf A}))$ we have the theta kernel
\[
\theta_\Phi(g,g')=\sum_{\gamma\in X(F^+)}\left(\omega_{V,V',\chi}(g,g')\Phi\right)(\gamma).
\]
For a cusp form $f$ on $G(F^+)\backslash G({\bf A})$ define
\[
\theta_\Phi(f)(g')=\int_{G(F^+)\backslash G({\bf A})}\theta_\Phi(g,g')f(g)dg,\quad g'\in G'({\bf A}).
\]

We now recall the Rallis inner product formula and the construction of the standard $L$-functions
for unitary groups. Let $\pi$, $\tilde{\pi}$ be cuspidal automorphic representations of $G$, $f\in\pi$, $f\in\tilde{\pi}$.
Let $H=U(V\oplus (-V))$ be the quasi-split unitary group of size $2n$, $i_V: G\times G\hookrightarrow H$ 
be the natural inclusion, following the doubling method. The Piatetski-Shapiro-Rallis zeta integral is then defined by
\begin{equation}
Z(s, f, \tilde{f},\varphi,\chi)=\int_{(G\times G)(F^+)\backslash (G\times G)({\bf A})}E(i_V(g,\tilde{g}), s, \varphi, \chi)f(g)\tilde{f}(\tilde{g})\chi^{-1}\circ\det(\tilde{g})dg d\tilde{g},
\end{equation}
where $E(\cdot, s,\varphi, \chi)$ is the Eisenstein series on $H({\bf A})$ as in \cite[\S 1]{H2}, and $\varphi=\varphi(s)$ is a section of a degenerate principal series
$I_n(s,\chi)$ varying in $s$. This integral converges absolutely for $Re(s)$ large enough and admits an Euler expansion if $\varphi$, $f$ and $\tilde{f}$ are factorizable. 
It vanishes unless $\tilde{\pi}\cong\pi^\vee$, which we assume in the sequel.

For cusp forms $f$, $\tilde{f}$ on $G({\bf A})$ define the paring
\[
\langle f, \tilde{f}\rangle =\int_{G(F^+)\backslash G({\bf A})}f(g)\tilde{f}(g)dg.
\]
Write $\pi =\otimes \pi_v$, $\pi^\vee=\otimes\pi_v^\vee$ and choose compatible invariant local parings $\langle\cdot,\cdot\rangle_v$ between $\pi_v$ and $\pi_v^\vee$ such that
\[
\langle f, \tilde{f}\rangle=\prod_v \langle f_v,\tilde{f}_v\rangle_v.
\]
There is a conjugate linear isomorphism $\pi_v\cong \pi_v^\vee$, $f_v\mapsto \bar{f}_v$ at each place $v$.

Choose $\phi=\otimes\phi_v\in {\mathcal S}(X({\bf A}))$. For $S$ a sufficiently large finite set of places of $F^+$, including archimedean ones, we have the Euler product
\[
Z(s, f, \tilde{f}, \varphi, \chi)=\prod_{v\in S}Z(s, f_v, \tilde{f}_v, \varphi_v, \chi_v)d_n^S(s)^{-1}L^S(s+\frac{1}{2},\pi, St,\chi),
\]
where $L^S(s+\frac{1}{2},\pi, St,\chi)$ is the partial $L$-function of $\pi$ twisted by $\chi$, attached to $2n$-dimensional standard representation of the $L$-group, and
$d_n^S(s)$ is a product of certain partial $L$-functions attached to the extension $F/F^+$ (cf. \cite{H2}). Assume now $\tilde{\pi}\cong\pi^\vee$, then after proper 
normalization the {\it Rallis inner product
formula} can be written as
\begin{equation}
\langle\theta_\phi(f),\theta_{\bar{\phi}}(\bar{f})\rangle=\prod_{v\in S}Z(0, f_v,\bar{f}_v,\varphi_v,\chi_v)d_n^S(0)^{-1}L^S(\frac{1}{2},\pi, St,\chi),
\end{equation}
where $\varphi=\delta(\phi\otimes\bar{\phi})$ in the notation of \cite[p.182]{L2}.

The central $L$-value $L(\frac{1}{2},\pi, St,\chi)$ is of great interest and has a bunch of applications to arithmetic (cf. \cite{HLS}). In particular, knowing some information about the explicit local value at each place is quite useful. In \cite{HLS} under certain assumptions it is shown that
$L^S(\frac{1}{2},\pi, St,\chi)\geq 0$ for any finite set $S$ of places of $F^+$. More precisely, non-negativity of local central values is established for the cases (i) $v$ real, $\pi_v$ in the discrete series (our standing hypothesis); (ii) $v$ finite, $F/F^+$ split at $v$ or $\pi_v$ tempered.

As explained in \cite[\S2]{L2}, one has
\[
Z(0,f_v, \bar{f}_v, \varphi_v,\chi_v)=\int_{G(F_v^+)}(\omega_{\chi_v} (g)\phi_v, \phi_v) (\pi_v(g)f_v, f_v)dg,
\]
which integrates matrix coefficient of the Weil representation against that of $\pi_v$. From now on we assume that $v$ is real, $\pi_v$ is in the discrete series, $\phi_v$ is in the space of joint harmonics, and we shall replace
$(\pi_v(g)f_v, f_v)$ by a canonical matrix coefficient $\psi_{\pi_v}(g)$ of $\pi_v$ (see Section 4 for details). Our purpose in this paper  is to explicitly compute the archimedean zeta integral
\begin{equation} \label{zeta'}
\int_{G(F_v^+)}(\omega_{\chi_v}(g)\phi_v,\phi_v)\cdot \psi_{\pi_v}(g)dg
\end{equation}
in the case that $G(F_v^+)$ is the real unitary group $U(2,1)$. The case $G(F_v^+)=U(1,1)$ is solved in the dissertation \cite{Lin}, and we are interested in the first noncompact example of larger size, namely, the group $U(2,1)$. 

The main results of this paper can be formulated as follows. Fix an additive character $\psi$ of $\mathbb{R}$. Let $V$ be a 3-dimensional complex
Hermitian space, and let $G$ denote the unitary group attached to $V$. For each
odd-dimensional complex skew-Hermitian space $V'$, the group $G$ is a subgroup of
the real symplectic group Sp$(V\otimes_\mathbb{C} V')$ as usual. Define the metaplectic double cover $\widetilde{G}$
of $G$ to be the double cover of $G$ induced by the metaplectic double
cover $\widetilde{\textrm{Sp}}(V\otimes_\mathbb{C}V')\to \textrm{Sp}(V 
\otimes_\mathbb{C}V')$. This is independent of $V'$. Let $\lambda_1$, $\lambda_2$, 
$\lambda_3$ be
three pairwise different elements of $\frac{1}{2}+\mathbb{Z}$.
Let $\pi_\lambda$ be the genuine discrete series
representation of $\widetilde{G}$
with Harish-Chandra parameter $\lambda:=(\lambda_1,\lambda_2, \lambda_3)$, to be viewed as an irreducible unitary representation. By theta dichotomy for real unitary groups \cite{P} and a result in \cite{L1} on discrete spectrum of local theta correspondence, up to isometry there exists a unique 3-dimensional skew-Hermtian space $V'$ such that $\pi_\lambda$ occurs as a subrepresentation in the unitary oscillator representation $\omega_{V,V',\psi}$ attached to $V$, $V'$ and
$\psi$. Let $P_\lambda:\omega_{V,V',\psi}\to \omega_{V,V',\psi}$ denote the orthogonal projection to the $\pi_\lambda$-isotypic subspace.
Fix a maximal compact subgroup $K$ of $G$, which induces a maximal compact
subgroup $\widetilde{K}$ of $\widetilde{G}$.
Denote by $\tau_\lambda$ the minimal $\widetilde{K}$-type of $\pi_\lambda$. Then there is a positive number $c_{\psi, V, \lambda}$ such that
\[
\| P_\lambda(\phi)\| = c_{\psi, V, \lambda} \|\phi\|
\]
for all $\phi$ which lays in the  $\tau_\lambda$-isotypic subspace of the space of joint harmonics
(with respect to $K$ and an arbitrary maximal compact subgroup of the unitary
group attached to $V'$). The constant $c_{\psi,V,\lambda}$ is 1 when either $V$ or $V'$ is anisotropic. The main result of this paper is equivalent to an explicit
calculation of $c_{\psi, V, \lambda}$ when $V$ is of signature $(2,1)$ and $\psi$ is chosen to be $\psi_a: t\mapsto e^{2\pi i a t}$ for some $a>0$. In this case we list the explicit values of $c_{\psi,V,\lambda}$ below (cf. Section 5, Theorem A to D2).

\begin{theorem} Follow above notations, assume that $V$ has signature $(2,1)$, $\psi=\psi_a$ for some $a>0$ and let $\mu$'s, $\nu$'s, $\alpha$'s and $\beta$'s  below stand for non-negative integers. Then

(1) if $\lambda=(-\mu_2-\frac{1}{2}, -\mu_1-\frac{3}{2}, \nu+\frac{1}{2})$, $\mu_1\geq\mu_2$ or $\lambda=(\nu_1+\frac{3}{2}, \nu_2+\frac{1}{2}, -\alpha-\frac{1}{2})$, $\nu_1\geq\nu_2$, then $V'$ is anisotropic and $c_{\psi, V, \lambda}=1$.

(2) if $\lambda=(-\mu_2-\frac{1}{2},-\mu_1-\frac{3}{2}, -\alpha+\frac{3}{2})$, $\alpha-4\geq \mu_1\geq\mu_2$, then
\[
c^2_{\psi, V,\lambda}=\frac{(\alpha-\mu_1-3)(\alpha-\mu_2-2)}{(\alpha-1)\alpha}.
\]

(3) if $\lambda=(-\mu_2+\frac{1}{2}, -\mu_1-\frac{1}{2}, -\alpha-\frac{1}{2})$, $\mu_1\geq \mu_2\geq \alpha+2$, then
\[
c^2_{\psi, V,\lambda}=\frac{(\mu_2-\alpha-1)(\mu_1-\alpha)}{(\mu_2+\alpha)(\mu_1+\alpha+1)}.
\]

(4) if $\lambda=(-\mu_2-\frac{1}{2}, -\mu_1-\frac{1}{2}, -\alpha+\frac{1}{2})$, $\mu_1\geq \alpha\geq \mu_2+2$, then
\[
c^2_{\psi, V,\lambda}=\frac{(\mu_1-\mu_2)(\mu_1-\alpha+1)(\alpha-\mu_2-1)}{(\mu_1-\mu_2+1)(\mu_1+1)\alpha}.
\]

(5) if $\lambda=(\nu+\frac{1}{2}, -\mu-\frac{1}{2}, \beta-\frac{1}{2})$, $\beta\geq\nu+2$, then
\[
c^2_{\psi, V,\lambda}=\frac{(\beta+\mu)(\beta-\nu-1)}{(\beta+\mu+1)\beta}.
\]

(6) if $\lambda=(\nu-\frac{1}{2}, -\mu-\frac{1}{2}, \beta+\frac{1}{2})$, $\nu\geq\beta+2$, then
\[
c^2_{\psi, V,\lambda}=\frac{(\nu-\beta-1)(\nu+\mu)}{\nu(\nu+\mu+1)}.
\]

(7)  if $\lambda=(\nu_1+\frac{1}{2},\nu_2-\frac{1}{2},\beta+\frac{1}{2})$, $\nu_1\geq\nu_2\geq \beta+2$, then
\[
c^2_{\psi, V,\lambda}=\frac{(\nu_1-\beta)(\nu_2-\beta-1)}{(\nu_1+1)\nu_2}.
\]

(8) if $\lambda=(\nu_1+\frac{3}{2},\nu_2+\frac{1}{2},\beta-\frac{3}{2})$, $\beta-4\geq\nu_1\geq\nu_2$, then
\[
c^2_{\psi, V,\lambda}=\frac{(\beta-\nu_1-3)(\beta-\nu_2-2)}{(\beta-1)\beta}.
\]

(9) if $\lambda=(\nu_1+\frac{1}{2},\nu_2+\frac{1}{2}, \beta-\frac{1}{2})$, $\nu_1\geq\beta\geq \nu_2+2$, then
\[
c^2_{\psi, V,\lambda}=\frac{(\nu_1-\nu_2)(\nu_1-\beta+1)(\beta-\nu_2-1)}{(\nu_1-\nu_2+1)(\nu_1+1)\beta}.
\]

(10) if $\lambda=(\nu+\frac{1}{2},-\mu-\frac{1}{2}, -\alpha+\frac{1}{2})$, $\alpha\geq\mu+2$, then
\[
c^2_{\psi, V,\lambda}=\frac{(\alpha+\nu)(\alpha-\mu-1)}{(\alpha+\nu+1)\alpha}.
\]

(11) if $\lambda=(\nu+\frac{1}{2},-\mu+\frac{1}{2}, -\alpha-\frac{1}{2})$, $\mu\geq\alpha+2$, 
then
\[
c^2_{\psi, V,\lambda}=\frac{(\mu-\alpha-1)(\mu+\nu)}{\mu(\mu+\nu+1)}.
\]
\end{theorem}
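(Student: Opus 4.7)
The plan is to reduce the computation of $c_{\psi,V,\lambda}$ to an explicit evaluation of the zeta integral (\ref{zeta'}). For $\phi$ in the $\tau_\lambda$-isotypic subspace of the joint harmonics and $\psi_{\pi_\lambda}$ a canonical matrix coefficient of $\pi_\lambda$ normalized at the identity, Schur orthogonality for the discrete series identifies (\ref{zeta'}) with a scalar multiple of $\|P_\lambda\phi\|^2$, the scalar being $d(\pi_\lambda)^{-1}$ times a normalization that can be tracked through the Fock pairing. Consequently $c_{\psi,V,\lambda}^2$ is, up to factors that cancel in the ratio $\|P_\lambda\phi\|^2/\|\phi\|^2$, exactly the value of (\ref{zeta'}), and the eleven cases of the theorem amount to eleven explicit evaluations of this integral.

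I would first use theta dichotomy for real unitary groups from \cite{P} together with the determination of discrete spectra of the local theta correspondence from \cite{L1} to pin down, for each $\lambda$, the unique odd-dimensional skew-Hermitian $V'$ such that $\pi_\lambda \hookrightarrow \omega_{V,V',\psi}$. The arrangement of signs and relative sizes of the components of $\lambda$ dictates both the signature of $V'$ and the position of the minimal $\widetilde K$-type $\tau_\lambda$ relative to the walls of the compact Weyl chamber; this partitions parameters into the eleven families in the statement. When $V'$ is anisotropic the theta lift is an isometric embedding on the level of joint harmonics, yielding $c_{\psi,V,\lambda}=1$ and covering case (1) at once. The remaining ten cases require genuine calculation.

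For each nontrivial case the core is to write out both matrix coefficients explicitly. For the Weil representation I would use the Fock model: joint harmonics are realized as polynomials on $V\otimes V'$ annihilated by the natural Laplacians, decomposing under $K\times K'$ into pieces indexed by pairs of highest weights via a Howe-duality style description. One selects a highest-weight vector $\phi$ in the $\tau_\lambda$-component, expresses the action of $G=U(2,1)$ in Fock coordinates, and reads off $(\omega_{\psi,V,V'}(g)\phi,\phi)$. For the discrete series coefficient $\psi_{\pi_\lambda}$ I would realize $\pi_\lambda$ via Schmid operators as the kernel of a first-order invariant differential operator between appropriate homogeneous vector bundles on $G/K$; applied to the minimal $\widetilde K$-type this gives a concrete closed-form expression for the canonical matrix coefficient, most efficiently expressed in $KAK$ coordinates. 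Plugging both expressions into (\ref{zeta'}) and integrating against the $KAK$ Haar measure collapses the $K$-integrals and reduces the problem to a one-variable integral on the split torus $A\cong\mathbb{R}_{>0}$, which in every case is a beta-type integral whose closed form yields the formulas in (2)--(11).

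The main obstacle is the combinatorial bookkeeping. First, the decomposition of joint harmonics into $\widetilde K$-isotypic components, and the identification of the summand isomorphic to $\tau_\lambda$, depends delicately on $\lambda$ and on the splitting character; mislabeling shifts the weight of $\phi$ and invalidates the subsequent computation. Second, the Schmid-operator realization of $\pi_\lambda$ takes different forms depending on whether $\lambda$ corresponds to a holomorphic, antiholomorphic, or one of the two mixed families of discrete series for $U(2,1)$, and the resulting radial matrix coefficient is a different hypergeometric expression in each family; tracking the normalizations so that all $c_{\psi,V,\lambda}^2$ come out as clean rational functions of the parameters is delicate. Finally, each of the ten nontrivial cases requires its own evaluation of the reduced torus integral and its own verification of the prediction from theta dichotomy, but once the Fock-model and Schmid-operator machinery is set up carefully, the computations are parallel across cases and the stated values of $c_{\psi,V,\lambda}^2$ emerge as the values of standard beta integrals.
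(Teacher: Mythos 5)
Your proposal follows essentially the same route as the paper: identify $c_{\psi,V,\lambda}^2$ with the normalized zeta integral via Schur orthogonality and the formula $\int_G(\omega(g)\phi,\phi)\psi_\pi(g)\,dg=\|P_\pi\phi\|^2/d(\pi_\lambda)$, use theta dichotomy to fix $V'$ (with $c_{\psi,V,\lambda}=1$ in the anisotropic case), compute the Weil-representation coefficients from explicit joint harmonics in the Fock model and the discrete-series coefficients from Schmid operators, and collapse the $K$-integrals in $KAK$ coordinates to a radial integral. The only caveat is that for the middle discrete series the radial integrand carries a hypergeometric factor, so the final evaluation needs Gauss's contiguous relations and Euler's integral representation rather than a bare beta integral, but this does not change the method.
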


We now describe the organization of the paper as well as the main tools and techniques used in our computation.  In Section 2 we review the local theta correspondence of real unitary groups, the Fock model for the Weil representation, and the important notion of joint harmonics introduced in \cite{Ho}. The explicit description for the theta correspondence between a dual pair of real unitary groups with equal size is given in \cite{L1, P}. After these preparations, starting from understanding the Lie group structures, in Section 3 we pick up the joint harmonics explicitly and then fully compute the matrix coefficients of Weil representations restricted on $U(2,1)$.

Section 4 treats the matrix coefficients of discrete series representations of $U(2,1)$, which are of independent interest. The main tool we use is the so-called Schmid operator 
\cite{S}, which has been explored to study the matrix coefficients of discrete series of $SU(2,2)$ (cf. \cite{HaKO, Y}), as well as Whittaker functions \cite{T} and 
Shintani functions \cite{Tsu} for rank 1 real unitary groups. The key idea is that the matrix coefficients of discrete series are annihilated by certain Schmid operators. Consequently, radial part of these functions are found to satisfy certain Riemann's differential equations, whose solutions involve hypergeometric functions, an extremely important family of special functions.

Combining the work in Section 3 and Section 4, eventually we are able to evaluate the zeta integral (\ref{zeta'}).  Overall, we have a bunch of possibilities for $G'$ and we also need to take care of different types of discrete series, namely, the (anti)-holomorphic discrete series and middle discrete series. It turns out that a great deal of combinatorial complexity pops out in our computation. The main results are given through Theorem A to D2 in Section 5. In all cases, the final expression for the explicit value  looks very neat, as can be seen from Theorem 1.1, although we apply a lot of machineries and carry out heavy calculations. 

The results of this paper might be useful for the study of certain two-dimensional Shimura varieties (cf. \cite{HLS}). In future work, we hope to address the applications to arithmetic.

{\bf Acknowledgement.} The author would like to thank J.-S. Li for suggesting this problem, thank B. Lin for sending him the dissertation \cite{Lin}, and thank A. W. Knapp for useful suggestions on computing matrix coefficients of discrete series. He also thanks the referee for numerous comments which help revise the paper.

\section{Preliminaries}

\subsection{Dual pairs  and joint harmonics} 

For $p, q\geq 0$, set
\[
I_{p,q}=\begin{bmatrix} I_p & 0\\ 0 & -I_q\end{bmatrix},
\]
where $I_p$ is the $p\times p$ identity matrix. If $g\in M_{p+q}(\mathbb{C})$, set $g^*$ equal to the conjugate transpose matrix of $g$.
Let $U(p,q)$ be the group of all $g\in M_{p+q}(\mathbb{C})$ such that
\[
g I_{p,q}\cdot g^*= I_{p,q}.
\]

Let $V$, $V'$ be complex vector spaces with hermitian and skew-hermitian forms $I_{p,q}$ and $\sqrt{-1}I_{r,s}$. Let $G, G'$ be the isometry groups of $V$ and $V'$ respectively. Then $(G,G')=(U(p,q),U(r,s))$ is an irreducible dual pair inside $Sp(W)=Sp_{2N}(\mathbb{R})$, where $W=V\otimes V'$, and $N=(p+q)(r+s)$.

Let $\{\xi_1,\ldots, \xi_p, \eta_1,\ldots \eta_q\}$ and $\{e_1,\ldots e_r, f_1, \ldots f_s\}$ be the standard basis of $V$ and $V'$ respectively.  Then
\[
\{\zeta_1, \ldots, \zeta_N\}:=\{\xi_1\otimes e_1,\ldots, \xi_1\otimes f_s, \ldots, \eta_q\otimes e_1, \ldots,
\eta_q\otimes f_s\}
\]
form a  basis of $W$ over $\mathbb{C}$.
Let $J=I_{p,q}\otimes I_{r,s}.$
Under the basis
\[
(\zeta_1,\ldots, \zeta_N, \sqrt{-1}\zeta_1,\ldots, \sqrt{-1}\zeta_N)\begin{bmatrix}
I_N & 0\\
0 & J
\end{bmatrix}
\]
of $W$ over $\mathbb{R}$, the symplectic form on $W$ is represented by the matrix
\[
\begin{bmatrix}
0 & I_N\\
-I_N & 0
\end{bmatrix}.
\]
With the basis of $V$, $V'$ and $W$ specified as above, we have the following embeddings
\begin{equation}
\label{gembed} 
\left\{
\begin{aligned}
&\iota: G\hookrightarrow Sp(W), \qquad X+\sqrt{-1}Y\mapsto\begin{bmatrix}
X\otimes I & (Y\otimes I)J\\
-J(Y\otimes I) & J(X\otimes I)J
\end{bmatrix}, \\
& \iota': G'\hookrightarrow Sp(W),\qquad X'+\sqrt{-1}Y'\mapsto\begin{bmatrix}
I\otimes X' & (I\otimes Y')J\\
-J(I \otimes Y') & J(I \otimes X')J
\end{bmatrix}.
\end{aligned}\right.
\end{equation}

We shall recall the notion of joint harmonics from \cite{Ho}. Let  $K=U(p)\times U(q)$, $K'=U(r)\times U(s)$ and $U=U(N)$ be the maximal compact subgroups of $G$, $G'$ and $Sp(W)$ with Lie algebras $\frak{k}$, $\frak{k}'$ and $\frak{u}$ respectively. Let $\frak{sp}$ be the Lie algebra of $Sp(W)$ and $\frak{sp}_{\mathbb{C}}=\frak{sp}(2N,\mathbb{C})$ be the complexification of $\frak{sp}$. Then we have
the Cartan decomposition
\[
\frak{sp}_{\mathbb{C}}=\frak{u}_{\mathbb{C}}\oplus \frak{p}_+ \oplus \frak{p}_-,
\]
where
\begin{align*}
&\frak{u}=\bigg\{\begin{bmatrix} A & B\\ -B & A\end{bmatrix}:{}^tA=-A, {}^tB=B, A, B\in M_N(\mathbb{R})\bigg\},\\
&\frak{u}_{\mathbb{C}}\cong\frak{gl}(n,\mathbb{C}),\\
&\frak{p}_+=\bigg\{\begin{bmatrix} A & \sqrt{-1}A\\ \sqrt{-1}A & -A\end{bmatrix}: {}^tA=A, A\in M_N(\mathbb{C})\bigg\},\\
&\frak{p}_-=\bigg\{\begin{bmatrix} A & -\sqrt{-1}A\\ -\sqrt{-1}A & -A\end{bmatrix}: {}^tA=A, A\in M_N(\mathbb{C})\bigg\}.
\end{align*}
Denote the centralizer of $\frak{k}$(resp. $\frak{k}'$) in $\frak{sp}$ by $\frak{m}'$(resp. $\frak{m}$), then we have the decompositions
\begin{align*}
&\frak{m}'_{\mathbb{C}}=\frak{m}'^{(1,1)}\oplus\frak{m}'^{(2,0)}\oplus \frak{m}'^{(0,2)},\\
&\frak{m}_{\mathbb{C}}=\frak{m}^{(1,1)}\oplus\frak{m}^{(2,0)}\oplus \frak{m}^{(0,2)},
\end{align*}
where $\frak{m}'^{(1,1)}=\frak{m}'_{\mathbb{C}}\cap \frak{u}_{\mathbb{C}}$,
$\frak{m}'^{(2,0)}=\frak{m}'_{\mathbb{C}}\cap \frak{p}_+$,
$\frak{m}'^{(0,2)}=\frak{m}'_{\mathbb{C}}\cap \frak{p}_-$, and similarly
$\frak{m}^{(1,1)}=\frak{m}_{\mathbb{C}}\cap \frak{u}_{\mathbb{C}}$,
$\frak{m}^{(2,0)}=\frak{m}_{\mathbb{C}}\cap \frak{p}_+$,
$\frak{m}^{(0,2)}=\frak{m}_{\mathbb{C}}\cap \frak{p}_-$. Fixing a non-trivial additive character $\psi$ of $\mathbb{R}$, the Harish-Chandra module $\omega^{HC}_\psi$ of the metaplectic representation $\omega_{\psi}$ of $\widetilde{Sp}_{2N}(\mathbb{R})$
can be realized on $\mathscr{P}_N$, the space of polynomials on $\mathbb{C}^N$. We define the $K$-harmonic space
\[
\mathscr{H}(K)=\{P\in\mathscr{P}_N: \ell(P)=0\textrm{ for all }\ell\in \frak{m}'^{(0,2)}\},
\]
and similarly the $K'$-harmonic space $\mathscr{H}(K')$ which consists of polynomials annihilated by $\frak{m}^{(0,2)}$. Then the space of joint harmonics is defined to be $\mathscr{H}=\mathscr{H}(K)\cap\mathscr{H}(K')$, which is $(\widetilde{K}\times\widetilde{K}')$-invariant, where $\widetilde{K}$ (resp. $\widetilde{K}'$) is the double cover of $K$ (resp. $K'$) in $\widetilde{Sp}_{2N}(\mathbb{R})$.

If $\sigma$ is a $\widetilde{K}$-type or $\widetilde{K}'$-type occurring in $\mathscr{P}_N$, then the {\it degree of $\sigma$} is the least degree of
nonzero polynomials occurring in the $\sigma$-isotypic component of $\mathscr{P}_N$. For the dual pair $(G,G')$ we know that
$\widetilde{K}$ and $\widetilde{K}'$ generate mutual commutants on $\mathscr{H}$,
\[
\mathscr{H}=\bigoplus_i\mathscr{H}_{\sigma_i, \sigma_i'}\cong \bigoplus_i \sigma_i\otimes\sigma_i',
\]
where for each $i$, $\sigma_i\in\widehat{\widetilde{K}}$ and $\sigma_i'\in\widehat{\widetilde{K}'}$ determine each other, and they have the same degree. 

From now on assume that $p+q=r+s$. Fix the character $\psi:  t\mapsto e^{2\pi i t}$ of $\mathbb{R}$.  One may fix a splitting character $\chi$ of $\mathbb{C}^\times/\mathbb{R}_+^\times$ as well such that $\chi|_{\mathbb{R}^\times}=\textrm{sgn}(\cdot)^{p+q}$, where $\textrm{sgn}(\cdot)$ is the sign character of $\mathbb{R}^\times$. For example one may take $\chi(z)=(z/|z|)^{p+q}$.

One has the following results from \cite{L1, P}.
Let $\pi$ and $\pi'$ be genuine irreducible admissible representations of $\widetilde{G}$ and $\widetilde{G}'$ respectively. If $\pi=\theta(\pi')$ under the theta correspondence, then every minimal $\widetilde{K}$-type of $\pi$ is of minimal degree and corresponds to a minimal $\widetilde{K}'$-type of $\pi'$ in the space of joint harmonics. Below, the highest weight of a representation of $K$ will be used to denote the corresponding representation of $\widetilde{K}$ by nontrivial extension across $\{\pm1\}$. 
The corresponding pairs $\sigma\in \widehat{\widetilde{K}}$, $\sigma'\in\widehat{\widetilde{K}'}$ are given by
\[
\left\{
\begin{aligned}
&\sigma=[(\mu_1,\ldots,\mu_m, 0,\ldots,0,-\nu_n,\ldots,-\nu_1)+\det{}^{(r-s)/2}]\\
&\qquad \otimes[(\alpha_1,\ldots,\alpha_k,0,\ldots,0,-\beta_l,\ldots,-\beta_1)+\det{}^{-(r-s)/2}],\\
&\sigma'=[(\mu_1,\ldots,\mu_m, 0,\ldots,0,-\beta_l,\ldots,-\beta_1)+\det{}^{(p-q)/2}]\\
&\qquad\otimes[(\alpha_1,\ldots,\alpha_k,0,\ldots,0,-\nu_n,\ldots,-\nu_1)+\det{}^{-(p-q)/2}]
\end{aligned}\right.
\]
where $\mu_1\geq\cdots\geq\mu_m>0,$ $\nu_1\geq\cdots\geq\nu_n>0$, $\alpha_1\geq\cdots\geq\alpha_k>0$, $\beta_1\geq\cdots\geq\beta_l>0$. The indices satisfy the obvious constraints $m+n\leq p$, $k+l\leq q$, $m+l\leq r$, $k+n\leq s$. In general, if a $\widetilde{K}$-type
\[
\tau=[(x_1,\ldots, x_p)+ \det{}^{(r-s)/2}]\otimes [(y_1,\ldots, y_q)+\det{}^{-(r-s)/2}]
\]
occurs in ${\mathscr H}$, then by \cite[Lemma 1.4.5]{P} the degree of $\tau$ is
\[
\sum^p_{i=1}|x_i| +\sum^q_{j=1} |y_j|.
\]
In particular this verifies that $\sigma$ and $\sigma'$ above have the same degree. With $\psi$ being fixed as above, we will then drop it from the notation and write $\omega_\psi$, $\omega^\infty_\psi$, $\omega^{HC}_\psi$ etc. as $\omega$, $\omega^\infty$,  $\omega^{HC}$ respectively.

\subsection{The Fock model}

We follow the treatment in \cite{F}. The smooth representation $\omega^\infty$ corresponding to $\omega$ can be realized as the Fock space
\[
{\mathscr F}_N=\{f: f \textrm{ is entire on }\mathbb{C}^N  \textrm{ and } \|f\|_{\mathscr{F}_N}^2=\int_{\mathbb{C}^N} |f(z)|^2 e^{-\pi|z|^2}dz<\infty\}.
\]
Let $z={}^t(z_1,\ldots, z_N)$ be the variables of polynomials in $\mathscr{P}_N$. Then $\left\{\sqrt{\frac{\pi^{|\alpha|}}{\alpha!}}z^\alpha, |\alpha|\geq 0\right\}$ forms an orthonormal basis for ${\mathscr F}_N$.
The Bargmann transform gives an isometry from $L^2(\mathbb{R}^N)$, the space on which the Schr\"{o}dinger representation is realized, onto ${\mathscr F}_N$. Based
on this transform, we introduce the following linear map
\begin{equation}\label{gc}
M_{2N}(\mathbb{R})\to M_{2N}(\mathbb{C}),\quad g=\begin{bmatrix} A & B\\ C & D\end{bmatrix}\mapsto g^c=\begin{bmatrix} \frac{A+D}{2}+\sqrt{-1}\frac{C-B}{2} &
\frac{A-D}{2}+\sqrt{-1}\frac{C+B}{2} \\ \frac{A-D}{2}-\sqrt{-1}\frac{C+B}{2} & \frac{A+D}{2}-\sqrt{-1}\frac{C-B}{2}\end{bmatrix}.
\end{equation}
Denote by $Sp^c$ the image of $Sp_{2N}(\mathbb{R})$. Let $\nu$ be the Fock projective representation of $Sp^c$ on ${\mathscr F}_N$. Then for
$g^c= \begin{bmatrix} P & Q\\ \overline{Q} & \overline{P}\end{bmatrix} \in Sp^c$,  up to a factor of $\pm 1$ the operator $\nu(g^c)$ is given by
\begin{equation}\label{fockaction}
\left\{
\begin{aligned}
&\nu(g^c)f(z) = \int_{\mathbb{C}^N}K_{g^c}(z, \bar{w})f(w)e^{-\pi |w|^2} dw,\\
&K_{g^c}(z,\bar{w})= (\det P)^{-\frac{1}{2}} \exp\bigg[\frac{\pi}{2}\big({}^tz\overline{Q}P^{-1}z+2{}^t\bar{w} P^{-1}z -{}^t\bar{w}P^{-1} Q \bar{w}\big)\bigg].
\end{aligned}\right.
\end{equation}
The unitary group $U(N)$ embeds into $Sp^c$ by
\[
U(N)\hookrightarrow Sp^c,\quad P\mapsto\begin{bmatrix} P & 0 \\ 0 & \overline{P}\end{bmatrix},
\]
with the action of the Fock representation
\begin{equation}\label{nuu}
\nu\begin{bmatrix} P & 0 \\ 0 & \overline{P}\end{bmatrix}f(z)=(\det P)^{-\frac{1}{2}}f(P^{-1}z).
\end{equation}
For convenience we shall also label $z_1, \ldots, z_N$ as $z_{11},\ldots, z_{1n}, \ldots, z_{n1},\ldots, z_{nn}$, where $n=p+q=r+s$.
If we write 
\[
z=\begin{bmatrix} z_{11} & \cdots & z_{1n} \\ & \cdots & \\ z_{n1} & \cdots & z_{nn}\end{bmatrix}=\begin{bmatrix} A_{p\times r} & B_{p\times s} \\ 
C_{q\times r} & D_{q\times s}\end{bmatrix}, 
\]
then from (\ref{gembed}), (\ref{gc}) and (\ref{nuu}) it follows that for $k=(k_1,k_2)\in K=U(p)\times U(q)$,
\begin{equation}\label{fockk}
\omega(k)f(z)=(\det k_1)^{\frac{r-s}{2}}(\det k_2)^{-\frac{r-s}{2}}f\begin{pmatrix}
{}^tk_1 A & k_1^{-1}B \\ k_2^{-1}C & {}^tk_2 D\end{pmatrix}.
\end{equation}

Let $d\nu$ be the infinitesimal Fock representation of $\frak{sp}$ on $\mathscr{P}_N$. We collect some formulas for the action of $d\nu$ from \cite{F}, which will be used to
determine the joint harmonics explicitly in the next section.
 Denote by $p_-(A)$ the following typical element in $\frak{p}_-$,
\[
p_-(A)=\begin{bmatrix} A & -\sqrt{-1}A\\ -\sqrt{-1}A & -A\end{bmatrix},
\]
where $^tA=A$, $A\in M_N(\mathbb{C})$. Then restricted on $\frak{p}_-$ one has
\begin{equation}\label{p-}
 d\nu[p_-(X_{ij}+X_{ji})]=\frac{2}{\pi}\frac{\partial^2}{\partial z_i\partial z_j},\quad i,j=1,\ldots, N,
\end{equation}
where the $X_{ij}$'s are elementary matrices given by
\[
X_{ij}=(\delta_{ik}\delta_{jl})_{1\leq k, l\leq N}\quad \textrm{with Kronecker's delta }\delta_{ik}.
\]
We also have another formula
\begin{equation}\label{cartan}
d\nu(X_{i,N+i}-X_{N+i,i})=\sqrt{-1}\bigg(z_i\frac{\partial}{\partial z_i}+\frac{1}{2}\bigg), \quad i=1,\ldots, N.
\end{equation}
Let $\frak{t}\subset \frak{k}$ and $\frak{t}'\subset\frak{k}'$ be the compact Cartan subalgebras which consist of $n\times n$ diagonal matrices with purely imaginary diagonal entries. Let $(\varepsilon_1, \ldots, \varepsilon_{n})$ and $(\varepsilon_1',\ldots,\varepsilon_{n}')$ denote the signatures of $I_{p,q}$ and $I_{r,s}$ respectively, i.e. $(\underbrace{+,\ldots,+}_p,\underbrace{-,\ldots,-}_q)$ and $(\underbrace{+,\ldots,+}_r,\underbrace{-,\ldots,-}_s)$. Then from either (\ref{fockk}) or (\ref{cartan}) we can see that
\begin{equation}
\label{taction}
\left\{
\begin{aligned}
&d\nu \big[\iota \cdot {\rm diag}(t_1, \ldots, t_n)\big]=\sum^n_{i=1}\varepsilon_i t_i\bigg(\sum^n_{j=1}\varepsilon'_jz_{ij}\frac{\partial}{\partial z_{ij}}+\frac{r-s}{2}\bigg),\\
&d\nu\big[\iota' \cdot {\rm diag}(t'_1, \ldots, t'_n)\big]=\sum^n_{j=1}  \varepsilon_j't'_j\bigg(\sum^n_{i=1}\varepsilon_iz_{ij}\frac{\partial}{\partial z_{ij}}+\frac{p-q}{2}\bigg),
\end{aligned}\right.
\end{equation}
where ${\rm diag}(t_1, \ldots, t_n)\in\frak{t}$,
${\rm diag}(t'_1, \ldots, t'_n)\in\frak{t}'$.

\section{Matrix coefficients of the Weil representations}

\subsection{Structures of $U(2,1)$}

Let $G=U(2,1)$ with Lie algebra
\[
\frak{g}=\frak{u}(2,1)=\bigg\{\begin{bmatrix} X & Y\\ Y^* & Z \end{bmatrix}: X+X^*= Z+Z^*=0\bigg\},
\]
where $X$ and $Y$ are $2\times 2$ and $2\times 1$ blocks respectively. Take a maximal compact subgroup $K=U(2)\times U(1)$ of $G$. Let $\theta(X)=-X^*$ be the Cartan involution and $\frak{g}=\frak{k}+\frak{p}$ the Cartan decomposition of $\frak{g}$ with respect to $\theta$. Then $\frak{k}=\frak{u}(2)\times\frak{u}(1)$, and we have the following maximal abelian subspace of $\frak{p}$
\begin{equation}\label{a}
\frak{a}=\mathbb{R}H:=\mathbb{R}(X_{13}+X_{31})=\left\{\begin{bmatrix}0 & 0 & t \\ 0 & 0 & 0\\  t & 0 & 0\end{bmatrix}: t\in\mathbb{R}\right\}.
\end{equation}
Let $\Sigma$ be the restricted root system $\Sigma(\frak{g},\frak{a})$. Let $\alpha$ be the linear functional $\frak{a}\to \mathbb{R}$, $t H\mapsto t$. Then we may choose a positive subsystem $\Sigma^+=\{\alpha,2\alpha\}$. We set,
\begin{align*}
 &H_1=\sqrt{-1}(X_{11}+X_{33}),\quad H_2=\sqrt{-1}X_{22},\\
 &E_1=\begin{bmatrix} 0 & 1 & 0\\ -1 & 0 & 1\\ 0 & 1 & 0\end{bmatrix},\quad E_2=\sqrt{-1} \begin{bmatrix}0 & 1 & 0\\ 1 & 0 & -1 \\ 0 & 1 & 0\end{bmatrix},\quad E_3=\sqrt{-1}\begin{bmatrix}1 & 0 & -1\\ 0 & 0 & 0\\ 1 & 0 & -1\end{bmatrix}.
\end{align*}
Then $\frak{g}=\frak{c}(\frak{a})+\sum_{\beta\in\Sigma}\frak{g}_\beta$, where
\begin{equation}\label{rootspace}
\left\{
\begin{aligned}
&\frak{c}(\frak{a})=\textrm{centralizer of }\frak{a}=\mathbb{R}H_1+\mathbb{R}H_2,\\
&\frak{g}_\alpha=\mathbb{R}E_1+\mathbb{R}E_2,\quad\frak{g}_{2\alpha}=\mathbb{R}E_3,\quad \frak{g}_{-\beta}={}^t\frak{g}_\beta.
\end{aligned}\right.
\end{equation}
The Haar measure on $G$ under the decomposition $G= KA^+K$, where $A^+=\exp\overline{\frak{a}^+}$, is given by
\[
dg=D_{A}(a)dk_1dadk_2
\]
where $g=k_1ak_2$, $k_1, k_2\in K=U(2)\times U(1)$, $a\in A^+$, and
\[
D_{A}(a)=\prod_{\beta\in\Sigma^+}\big[\sinh \beta(\log a)\big]^{\dim \frak{g}_{\beta}}.
\]
Let
\[
a_t=\exp(tH)=\begin{bmatrix}\cosh t & 0 & \sinh t \\ 0 & 1 & 0\\ \sinh t & 0 & \cosh t\end{bmatrix},
\]
then from (\ref{rootspace}) it follows that
\begin{equation}\label{da}
D(t):=D_{A}(a_t)=\sinh^2 t\sinh (2t),\quad da_t=dt, \quad t\geq 0.
\end{equation}
The Haar measure on $K$ is normalized such that the total volume equals 1. More explicitly, we shall parameterize $k\in K=U(2)\times U(1)$ and the measure $dk$ by
\begin{equation}\label{k}
\left\{
\begin{aligned}
&k=k(\zeta,\theta,\xi,\eta,\gamma)=\textrm{diag}(1,\zeta,1)\kappa(\theta)\textrm{diag}(\xi,\eta,\gamma),\\
& \kappa(\theta):=\begin{bmatrix}\cos\theta & \sin\theta & 0\\ -\sin\theta & \cos\theta & 0\\ 0 & 0 & 1\end{bmatrix},\\
&dk=\sin 2\theta d\theta d\zeta d\xi d\eta d\gamma, \quad \theta\in \left[0,\frac{\pi}{2}\right],\quad \zeta, \xi, \eta, \gamma\in U(1).
\end{aligned}\right.
\end{equation}

\subsection{Explicit joint harmonics} We shall carefully choose certain nonzero vector $\phi\in\mathscr{H}_{\sigma, \sigma'}$ such that our later computation of zeta integrals can be handled effectively. Since $\sigma$ and $\sigma'$ determine each other, it suffices to find some $\phi\in \mathscr{H}$ which is of $\widetilde{K}$-type $\sigma$. Let $\Lambda=(\Lambda_1, \Lambda_2, \Lambda_3)\in \sqrt{-1}\frak{t}^*$ be the highest weight of $\sigma$. In order to be compatible with the notations in the previous section, in the sequel we also write $\sigma$ for its highest weight $\Lambda$ by abuse of notation.
For later use we introduce two parameters
\begin{equation}\label{rs}
r=\Lambda(Z_{12})=\Lambda_1-\Lambda_2,\quad s=\Lambda(Z_{23})=\Lambda_2-\Lambda_3,
\end{equation}
where $Z_{ij}:=X_{ii}-X_{jj}$. The $r, s$ here should not be confused with those previously used in $U(r,s)$.
Then we have
\[
\sigma\cong \big(\rho_r\otimes\det{}^{\Lambda_2}\big)\boxtimes\chi^{\Lambda_3},
\]
where $r=\Lambda_1-\Lambda_2$, $\rho_r= Sym^r$ is the irreducible representation of $U(2)$ with highest weight $(r,0)$, and $\chi$ is the fundamental character
$u\mapsto u$ of $U(1)$. We shall realize $\rho_r$ as
\begin{equation}\label{rhor}
\mathbb{C}[x,y]_r=\{f\in \mathbb{C}[x,y]: f\textrm{ is homogeneous of degree }r\},
\end{equation}
with the group action given by
$\rho_r(g)f(x,y)=f\big((x,y)g\big),$ $g\in U(2)$.

We have the following cases for $G'$ and the pair $(\sigma, \sigma')$.

\

\noindent (A) $G'=U(3,0)=U(3)$. Then $G'$ is compact, thus $\frak{k}'=\frak{g}'$ and $\frak{m}=\frak{g}$. Recall that we have the embeddings $\iota: \frak{g}\hookrightarrow \frak{sp}$, $\iota': \frak{g}'\hookrightarrow\frak{sp}$ given by (\ref{gembed}),
where $J=I_{2,1}\otimes I_3$.
By direct calculations we find
\begin{equation}\label{m1}
\frak{m}^{(0,2)}=\frak{g}_{\mathbb{C}}\cap\frak{p}_-=\bigg\{p_-(A): A=\begin{bmatrix} 0 & Y\\ ^tY & 0\end{bmatrix}\otimes I_3, Y\in M_{2\times 1}(\mathbb{C})\bigg\}, \quad \frak{m}'^{(0,2)}=0.
\end{equation}
Then from (\ref{p-}) and (\ref{m1}) we see that in the Fock model $d\nu\big(\frak{m}^{(0,2)}\big)$ is spanned by
\begin{equation}\label{m1fock}
\frac{\partial^2}{\partial z_{11}\partial z_{31}}+\frac{\partial^2}{\partial z_{12}\partial z_{32}}+
\frac{\partial^2}{\partial z_{13}\partial z_{33}}\quad\textrm{and}\quad\frac{\partial^2}{\partial z_{21}\partial z_{31}}+\frac{\partial^2}{\partial z_{22}\partial z_{32}}+\frac{\partial^2}{\partial z_{23}\partial z_{33}}.
\end{equation}
The corresponding pair of weights are
\[
\sigma=\bigg(\mu_1+\frac{3}{2},\mu_2+\frac{3}{2},-\nu-\frac{3}{2}\bigg),\quad \sigma'=\bigg(\mu_1+\frac{1}{2},\mu_2+\frac{1}{2}, -\nu+\frac{1}{2}\bigg),
\]
where $\mu_1\geq\mu_2\geq 0$, $\nu\geq 0$.
By (\ref{fockk}) it is not hard to check that the following $\phi$ lies in $\mathscr{H}$ and is of $\widetilde{K}$-type $\sigma$
\[
\phi=z_{11}^r \det\begin{bmatrix} z_{11} & z_{12} \\ z_{21} & z_{22} \end{bmatrix}^{\mu_2} z_{33}^\nu=z_{11}^r (z_{11}z_{22}-z_{21}z_{12})^{\mu_2}z_{33}^\nu,
\]
where $r=\mu_1-\mu_2$ in this case.
In fact, modulo a character $\det{}^{\frac{3}{2}}$, the subgroup $U(2)$ acts on $z_{11}z_{22}-z_{21}z_{12}$ by the character $\det$, and acts on the space spanned by $z_{11}^iz_{21}^{r-i}$, $i=0,\ldots, r$ via the representation $\rho_r$.

\

\noindent (B) $G'=U(0,3)=U(3)$. Then $\frak{k}'=\frak{g}'$, $\frak{m}=
\frak{g}$, $J=-I_{2,1}\otimes I_3$. We still have (\ref{m1}) and (\ref{m1fock}).
The corresponding pair of weights are
\[
\sigma=\bigg(-\nu_2-\frac{3}{2}, -\nu_1-\frac{3}{2}, \alpha+\frac{3}{2}\bigg),\quad \sigma'=\bigg(\alpha-\frac{1}{2}, -\nu_2-\frac{1}{2}, -\nu_1-\frac{1}{2}\bigg)
\]
where  $\nu_1\geq\nu_2\geq 0$, $\alpha\geq 0$. Similar to case (A), we may choose
\[
\phi=z_{23}^r (z_{12}z_{23}-z_{22}z_{13})^{\nu_2} z_{31}^{\alpha},\quad r=\nu_1-\nu_2.
\]

\

\noindent (C) $G'=U(2,1)$. Then $J=I_{2,1}\otimes I_{2,1}$. We have
\[
\frak{m}^{(0,2)}=\bigg\{p_-(A): A=\begin{bmatrix} 0 & 0 & A_1\\ 0 & 0 & A_2\\
{}^tA_1 & {}^tA_2 & 0\end{bmatrix}, A_i=\begin{bmatrix} x_i I_2 & 0\\ 0 & y_i\end{bmatrix}, x_i, y_i\in\mathbb{C}, i=1,2\bigg\}
\]
thus $d\nu\big(\frak{m}^{(0,2)}\big)$ is spanned by
\[
\frac{\partial^2}{\partial z_{11}\partial z_{31}}+\frac{\partial^2}{\partial z_{12}\partial z_{32}},\quad \frac{\partial^2}{\partial z_{13}\partial z_{33}},\quad
\frac{\partial^2}{\partial z_{21}\partial z_{31}}+\frac{\partial^2}{\partial z_{22}\partial z_{32}},\quad \frac{\partial^2}{\partial z_{23}\partial z_{33}},
\]
and
\[
\frak{m}'^{(0,2)}=\bigg\{p_-(A): A=\begin{bmatrix} A_1 & 0 & 0\\ 0 & A_1 & 0\\ 0 & 0 & A_2\end{bmatrix}, A_i
=\begin{bmatrix} 0 & Y_i \\ ^tY_i & 0 \end{bmatrix}, Y_i\in M_{2\times 1}(\mathbb{C}), i=1,2\bigg\}
\]
thus $d\nu\big(\frak{m}'^{(0,2)}\big)$ is spanned by
\[
\frac{\partial^2}{\partial z_{11}\partial z_{13}}+\frac{\partial^2}{\partial z_{21}\partial z_{23}},\quad
\frac{\partial^2}{\partial z_{12}\partial z_{13}}+\frac{\partial^2}{\partial z_{22}\partial z_{23}},\quad \frac{\partial^2}{\partial z_{31}\partial z_{33}}, \quad \frac{\partial^2}{\partial z_{32}\partial z_{33}}.
\]
We have two cases for the corresponding pairs of weights:
\begin{equation}\label{C1}
\sigma=\sigma'=\bigg(\mu_1+\frac{1}{2}, \mu_2+\frac{1}{2}, \alpha-\frac{1}{2}\bigg)\tag{C1}
\end{equation}
where $\mu_1\geq \mu_2\geq 0, \alpha\geq 0$, for which we may choose
\[
\phi=z_{11}^r (z_{11}z_{22}-z_{21}z_{12})^{\mu_2}z_{33}^\alpha,\quad r=\mu_1-\mu_2.
\]
\begin{equation}\label{C2}
\sigma=\bigg(\mu+\frac{1}{2}, -\nu+\frac{1}{2}, -\beta-\frac{1}{2}\bigg), \quad \sigma'=\bigg(\mu+\frac{1}{2}, -\beta+\frac{1}{2}, -\nu-\frac{1}{2}\bigg)\tag{C2}
\end{equation}
where $\mu, \nu, \beta\geq 0$. We claim that the vector
\[
\phi=z_{11}^\mu z_{23}^\nu z_{32}^\beta
\]
lies in $\mathscr{H}_{\sigma,\sigma'}$. Clearly $\phi\in\mathscr{H}$. Again modulo a character $\det{}^\frac{1}{2}$, consider the irreducible $U(2)$-modules
$V_{\lambda_1}$, $V_{\lambda_2}$ which are generated by the highest weight vectors $v_{\lambda_1}=z_{11}^\mu$, $v_{\lambda_2}=z_{23}^\nu$ respectively, where $\lambda_1=(\mu, 0)$, $\lambda_2=(0,-\nu)$. Then $\lambda_1+\lambda_2=(\mu,-\nu)$. Under the projection
\[
V_{\lambda_1}\otimes V_{\lambda_2}\to V_{\lambda_1+\lambda_2},
\]
the image of the vector $v_{\lambda_1}\otimes v_{\lambda_2} =z_{11}^\mu z_{23}^\nu$ is a multiple of the highest weight vector $v_{\lambda_1+\lambda_2}$, which generates
$V_{\lambda_1+\lambda_2}$. This proves that $\phi$ is of $\widetilde{K}$-type $\sigma$.

\

\noindent (D) $G'=U(1,2)$. Then $J=I_{2,1}\otimes I_{1,2}$. We have
\[
\frak{m}^{(0,2)}=\bigg\{p_-(A): A=\begin{bmatrix} 0 & 0 & A_1\\ 0 & 0 & A_2\\
{}^tA_1 & {}^tA_2 & 0\end{bmatrix}, A_i=\begin{bmatrix} x_i  & 0\\ 0 & y_i I_2\end{bmatrix}, x_i, y_i\in\mathbb{C}, i=1,2\bigg\}
\]
thus $d\nu\big(\frak{m}^{(0,2)}\big)$ is spanned by
\[
\frac{\partial^2}{\partial z_{11}\partial z_{31}},\quad \frac{\partial^2}{\partial z_{12}\partial z_{32}}+\frac{\partial^2}{\partial z_{13}\partial z_{33}},\quad \frac{\partial^2}{\partial z_{21}\partial z_{31}},\quad \frac{\partial^2}{\partial z_{22}\partial z_{32}}+\frac{\partial^2}{\partial z_{23}\partial z_{33}},
\]
and
\[
\frak{m}'^{(0,2)}=\bigg\{p_-(A): A=\begin{bmatrix} A_1 & 0 & 0\\ 0 & A_1 & 0\\ 0 & 0 & A_2\end{bmatrix}, A_i
=\begin{bmatrix} 0 & Y_i \\ ^tY_i & 0 \end{bmatrix}, Y_i\in M_{1\times 2}(\mathbb{C}), i=1,2\bigg\}
\]
thus $d\nu\big(\frak{m}'^{(0,2)}\big)$ is spanned by
\[
\frac{\partial^2}{\partial z_{11}\partial z_{12}}+\frac{\partial^2}{\partial z_{21}\partial z_{22}},\quad
\frac{\partial^2}{\partial z_{11}\partial z_{13}}+\frac{\partial^2}{\partial z_{21}\partial z_{23}},\quad \frac{\partial^2}{\partial z_{31}\partial z_{32}},\quad \frac{\partial^2}{\partial z_{31}\partial z_{33}}.
\]
We have two cases for the corresponding pairs of weights:
\begin{equation}\label{D1}
 \sigma=\bigg(-\nu_2-\frac{1}{2},-\nu_1-\frac{1}{2},-\beta+\frac{1}{2}\bigg),\quad \sigma'=\bigg(-\beta+\frac{1}{2}, -\nu_2-\frac{1}{2}, -\nu_1-\frac{1}{2}\bigg) \tag{D1}
\end{equation}
where $\nu_1\geq \nu_2\geq 0$, $\beta\geq 0$, for which we may choose
\[
\phi=z_{23}^r (z_{12}z_{23}-z_{22}z_{13})^{\nu_2} z_{31}^{\beta},\quad r=\nu_1-\nu_2.
\]
\begin{equation}\label{D2}
\sigma=\bigg(\mu-\frac{1}{2}, -\nu-\frac{1}{2}, \alpha+\frac{1}{2}\bigg),\quad \sigma'=\bigg(\mu+\frac{1}{2}, \alpha-\frac{1}{2}, -\nu-\frac{1}{2}\bigg)\tag{D2}
\end{equation}
where $\mu, \nu, \alpha\geq 0$. Similarly with case (C2), we may choose
\[
\phi=z_{11}^\mu z_{23}^\nu z_{32}^\alpha.
\]

\subsection{Matrix coefficients for the joint harmonics}

For the cases (A)$\sim$(D), we shall calculate the matrix coefficients $( \omega(g)\phi,\phi)$ for $g\in G$ and $\phi$ being chosen as in Section 3.2. We first give some notations and formulas that are used in our computation.

Write $g$ according to the $KAK$ decomposition as $g= k'^{-1}a k$, then
\[
( \omega(g)\phi,\phi)=(\omega(ak)\phi, \omega(k')\phi).
\]
We parametrize $k=k(\zeta,\theta,\xi,\eta,\gamma)$, $k'=k(\zeta',\theta',\xi',\eta',\gamma')$ as in (\ref{k}). For $a_t\in A^+$, we find that its image in $Sp$ via the embedding $\iota: G\hookrightarrow Sp$ is
\[
\iota(a_t)=\begin{bmatrix} a_t\otimes I & 0 \\ 0 & a_{-t}\otimes I \end{bmatrix},
\]
hence its image  $a_t^c$ in $Sp^c$ reads
\[
a^c_t=\begin{bmatrix} P & Q \\ Q & P\end{bmatrix}, \quad P=\begin{bmatrix}\cosh t & 0 & 0\\ 0 & 1 & 0\\ 0 & 0 & \cosh t\end{bmatrix}\otimes I,
\quad Q=\begin{bmatrix}0 & 0 & \sinh t\\ 0 & 0 & 0\\ \sinh t & 0 & 0\end{bmatrix}\otimes I.
\]
Write $z=[z_{ij}]_{i,j=1,2,3}$ with ordering of entries as before, and similarly $w=[w_{ij}]$. Then
\[
\left\{
\begin{aligned}
&QP^{-1}=P^{-1}Q=\begin{bmatrix} 0 & 0 & \tanh t \\ 0 & 0 & 0\\ \tanh t & 0 & 0 \end{bmatrix}\otimes I,\\
&{}^tzQP^{-1}z=2\tanh t(z_{11}z_{31}+z_{12}z_{32}+z_{13}z_{33}),\\
& {}^t\bar{w}P^{-1}z=\sum^3_{i=1}(\cosh^{-1} t(z_{1i}\bar{w}_{1i}+z_{3i}\bar{w}_{3i}) + z_{2i}\bar{w}_{2i}),\\
& {}^t\bar{w}P^{-1}Q\bar{w}=2\tanh t(\bar{w}_{11}\bar{w}_{31}+\bar{w}_{12}\bar{w}_{32}+\bar{w}_{13}\bar{w}_{33}).
\end{aligned}
\right.
\]
We will frequently use the following formulas
\begin{equation}\label{formula}
\left\{
\begin{aligned}
&\int_{\mathbb{C}}z^l e^{\pi \lambda\bar{z}-\pi|z|^2}dz=\lambda^l,\quad \int_{\mathbb{C}}\bar{z}^{l+1}e^{\pi \lambda\bar{z}-\pi|z|^2}dz=0,\\
&\int_{\mathbb{C}}|z|^{2l}e^{\pi\lambda z -\pi|z|^2}dz=\int_{\mathbb{C}}|z|^{2l}e^{\pi\lambda \bar{z} -\pi|z|^2}dz=\frac{l!}{\pi^l},
\end{aligned}\right.
\end{equation}
where $l\geq 0$, $\lambda$ is a constant.

\

Let us calculate case (A) for example, and only present the final formulas for other cases. As in Section 3.2, now we have
 $\phi=z_{11}^r (z_{11}z_{22}-z_{21}z_{12})^{\mu_2}z_{33}^\nu$, $\sigma=(\mu_1+\frac{3}{2},\mu_2+\frac{3}{2}, -\nu-\frac{3}{2})$
 with $r=\mu_1-\mu_2$. We first calculate $\omega(k)\phi$. Then by (\ref{taction})
\[
\omega\big[\textrm{diag}(\xi,\eta,\gamma)\big]\phi= \xi^{\mu_1+\frac{3}{2}}\eta^{\mu_2+\frac{3}{2}}\bar{\gamma}^{\nu+\frac{3}{2}}\phi.
\]
Using (\ref{fockk}) we find that
\[
\omega\big[\kappa(\theta)\big]\phi=\nu\begin{bmatrix} \kappa(\theta)\otimes I & 0 \\ 0 & \kappa(\theta)\otimes I \end{bmatrix}\phi
=(z_{11}\cos\theta -z_{21}\sin\theta )^r(z_{11}z_{22}-z_{21}z_{12})^{\mu_2}z_{33}^\nu.
\]
Again from (\ref{taction}) it follows
\[
\omega\big[\textrm{diag}(1,\zeta,1)\kappa(\theta)\big]\phi=\zeta^{\mu_2+\frac{3}{2}}(z_{11}\cos\theta -z_{21}\zeta\sin\theta )^r(z_{11}z_{22}-z_{21}z_{12})^{\mu_2}z_{33}^\nu.
\]
Combing above equations, we have derived the formula for $\omega(k)\phi$. We have a similar formula for $\omega(k')\phi$. It remains to give the action of $\omega(a_t)$ using (\ref{fockaction}), i.e. calculate
\begin{align*}
&\nu(a_t^c)\big[(z_{11}\cos\theta -z_{21}\zeta\sin\theta )^r(z_{11}z_{22}-z_{21}z_{12} )^{\mu_2}z_{33}^\nu\big]\\
=&\int_{\mathbb{C}^9}K_{a_t^c}(z,\bar{w})(w_{11}\cos\theta -w_{21}\zeta\sin\theta )^r(w_{11}w_{22}-w_{21}w_{12} )^{\mu_2}w_{33}^\nu e^{-\pi|w|^2}dw.
\end{align*}
Applying (\ref{formula}) repeatedly, we find that
\begin{align*}
\omega(a_t k)\phi=& ~\xi^{\mu_1+\frac{3}{2}}(\eta\zeta)^{\mu_2+\frac{3}{2}}\bar{\gamma}^{\nu+\frac{3}{2}}(\cosh t)^{-\mu_2-\nu-3}e^{\pi \tanh t (z_{11}z_{31}+z_{12}z_{32}+z_{13}z_{33})}\\
&\times (z_{11}\cosh^{-1}t\cos\theta -z_{21}\zeta\sin\theta)^r(z_{11}z_{22}-z_{21}z_{12})^{\mu_2}z_{33}^\nu.
\end{align*}
Then the matrix coefficient
$(\omega(g)\phi,\phi)$ equals
\begin{align*}
(\omega(a_tk)\phi, \omega(k')\phi) =&\int_{\mathbb{C}^9}\omega(a_tk)\phi\cdot\overline{\omega(k')\phi}\cdot e^{-\pi |z|^2}dz\\
=&~(\xi\bar{\xi}')^{\mu_1+\frac{3}{2}}(\eta\bar{\eta}'\zeta\bar{\zeta}')^{\mu_2+\frac{3}{2}}(\bar{\gamma}\gamma')^{\nu+\frac{3}{2}}(\cosh t)^{-\mu_2-\nu-3} J_1 J_2,
\end{align*}
where
\begin{align*}
J_1=& \int_{\mathbb{C}^6} (z_{11}\cosh^{-1}t\cos\theta -z_{21}\zeta\sin\theta)^r\overline{ (z_{11}\cos\theta' -z_{21}\zeta'\sin\theta')}^r\\
& \times\big|z_{11}z_{22}-z_{21}z_{12}\big|^{2\mu_2}e^{-\pi(|Z_1|^2+ |Z_2|^2)}dZ_1dZ_2\\
=&\sum^r_{i=0}{r\choose i}^2(\zeta\bar{\zeta}')^i(\cosh t)^{i-r}(\sin\theta\sin\theta')^i(\cos\theta\cos\theta')^{r-i}\sum^{\mu_2}_{j=0}{\mu_2 \choose j}^2\| z_{11}^{j+r-i}z_{21}^{\mu_2-j+i}z_{22}^jz_{12}^{\mu_2-j}\|^2\\
=&\frac{\mu_2!}{\pi^{\mu_1+\mu_2}}\sum^r_{i=0}{r\choose i}^2(\zeta\bar{\zeta}')^i(\cosh t)^{i-r}(\sin\theta\sin\theta')^i(\cos\theta\cos\theta')^{r-i}\sum^{\mu_2}_{j=0}{\mu_2 \choose j}(j+r-i)!(\mu_2-j+i)!\\
J_2=& \int_{\mathbb{C}^3} |z_{33}|^{2\nu} e^{\pi \tanh t(z_{11}z_{31}+z_{12}z_{32}+ z_{13}z_{33})- \pi |Z_3|^2 } dZ_3= \frac{\nu !}{\pi^\nu}.
\end{align*}
In the above $Z_j=(z_{1j}, z_{2j}, z_{3j})$, $j=1,2,3$, and we have used (\ref{formula}) for $J_2$. The following elementary combinatorial lemma enables us
 to further simply $J_1$.

 \begin{lemma} \label{comb} Fix $\mu, r\geq 0$. Then for $i=0,\ldots, r$,
 \[
 {r\choose i}\sum^\mu_{j=0}{\mu\choose j}(j+r-i)!(\mu-j+i)!=\frac{(\mu+r+1)!}{r+1}.
 \]
\end{lemma}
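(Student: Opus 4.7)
The right-hand side $(\mu+r+1)!/(r+1)$ does not depend on $i$, so the lemma is simultaneously an evaluation and an independence statement. My plan is to recognize the LHS as a Chu--Vandermonde convolution, which handles both features at once.

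First I would convert the factorials to binomial coefficients. Using
\[
\binom{r}{i}\binom{\mu}{j}(j+r-i)!(\mu-j+i)! \;=\; r!\,\mu!\,\binom{j+r-i}{r-i}\binom{\mu-j+i}{i},
\]
and the observation that $(\mu+r+1)!/(r+1) = r!\,\mu!\,\binom{\mu+r+1}{r+1}$, the claim reduces to
\[
\sum_{j=0}^{\mu}\binom{j+r-i}{r-i}\binom{\mu-j+i}{i} \;=\; \binom{\mu+r+1}{r+1}.
\]

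The reduced identity is the standard convolution $\sum_{j=0}^{N}\binom{j+a}{a}\binom{N-j+b}{b}=\binom{N+a+b+1}{a+b+1}$, which I would prove most cleanly by comparing coefficients of $x^\mu$ on the two sides of the generating-function identity $(1-x)^{-(r-i+1)}(1-x)^{-(i+1)}=(1-x)^{-(r+2)}$. Since the exponents on the left combine into an exponent depending only on $r$, independence from $i$ is built into this step. Alternatively, one can argue by induction on $\mu$ using Pascal's rule, or by a direct lattice-path bijection, but the generating-function route is the shortest.

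There is no real obstacle here; the only care required is in the factorial-to-binomial bookkeeping and in confirming that the summation range matches the natural range of nonvanishing for the binomial coefficients (which it does, so the boundary terms cost nothing). Once the reduction is in place, Chu--Vandermonde finishes the proof immediately.
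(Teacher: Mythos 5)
Your proof is correct and is essentially the paper's own argument: the paper likewise compares coefficients of $x^\mu$ on both sides of $(1+x)^{i-r-1}(1+x)^{-i-1}=(1+x)^{-r-2}$, which is your generating-function identity up to the substitution $x\mapsto -x$ (equivalently, up to rewriting the negative binomial coefficients). The factorial-to-binomial reduction and the observation that the exponents combine independently of $i$ match the paper exactly, so nothing further is needed.
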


\begin{proof}
Compare the coefficients of $x^\mu$ on both sides of the equation
\[
(1+x)^{i-r-1}(1+x)^{-i-1}=(1+x)^{-r-2}.
\]
Then we get
\[
\sum^\mu_{j=0}{i-r-1\choose j} {-i-1\choose \mu-j}= {-r-2\choose \mu},
\]
which implies
\[
\sum^\mu_{j=0}\frac{(j+r-i)!}{(r-i)!j!} \frac{(\mu-j+i)!}{i!(\mu-j)!}=\frac{(\mu+r+1)!}{(r+1)!\mu!}.
\]
The lemma follows immediately.
\end{proof}

It turns out that this lemma is also useful in the computation of zeta integrals for some cases, see Theorem C2 in Section 5. Now notice that in particular we have
\[
\|\phi\|^2=\frac{\mu_2!\nu!}{\pi^{\mu_1+\mu_2+\nu}}\sum^{\mu_2}_{j=0}{\mu_2\choose j}(j+r)!(\mu_2-j)!=\frac{\mu_2!\nu!}{\pi^{\mu_1+\mu_2+\nu}}\frac{(\mu_1+1)!}{r+1}.
\]
Combining with above lemma, in summary for case (A) we obtain
\begin{align}
\label{focka}
(\omega(g)\phi, \phi)=&~\|\phi\|^2 (\xi\bar{\xi}')^{\mu_1+\frac{3}{2}}(\eta\bar{\eta}'\zeta\bar{\zeta}')^{\mu_2+\frac{3}{2}}(\bar{\gamma}\gamma')^{\nu+\frac{3}{2}}(\cosh t)^{-\mu_2-\nu-3} \\
&\times \sum^r_{i=0}{r\choose i}(\zeta\bar{\zeta}')^i(\cosh t)^{i-r}(\sin\theta\sin\theta')^i(\cos\theta\cos\theta')^{r-i}\nonumber\\
=&~\|\phi\|^2 (\xi\bar{\xi}')^{\mu_1+\frac{3}{2}}(\eta\bar{\eta}'\zeta\bar{\zeta}')^{\mu_2+\frac{3}{2}}(\bar{\gamma}\gamma')^{\nu+\frac{3}{2}}(\cosh t)^{-\mu_2-\nu-3} \nonumber\\
&\times (\cosh^{-1}t\cos\theta\cos\theta'+\zeta\bar{\zeta}'\sin\theta\sin\theta')^r.\nonumber
\end{align}

Similar calculations give us the following

\noindent (B) $\phi=z_{23}^r (z_{12}z_{23}-z_{22}z_{13})^{\nu_2} z_{31}^{\alpha}$, $\sigma=(-\nu_2-\frac{3}{2}, -\nu_1-\frac{3}{2}, \alpha+\frac{3}{2})$
with $r=\nu_1-\nu_2$. Then
\begin{align}
\label{fockb}(\omega(g)\phi,\phi)=&~\|\phi\|^2 (\bar{\xi}\xi')^{\nu_2+\frac{3}{2}}(\bar{\eta}\eta'\bar{\zeta}\zeta')^{\nu_1+\frac{3}{2}}(\gamma\bar{\gamma}')^{\alpha+\frac{3}{2}}(\cosh t)^{-\nu_2-\alpha-3} \\
&\times ( \cos \theta \cos \theta' + \cosh^{-1}t\zeta\bar{\zeta}'\sin \theta \sin \theta')^r.\nonumber
\end{align}

\noindent (C1) $z_{11}^r (z_{11}z_{22}-z_{21}z_{12})^{\mu_2}z_{33}^\alpha$, $\sigma=(\mu_1+\frac{1}{2},\mu_2+\frac{1}{2},\alpha-\frac{1}{2})$
with $r=\mu_1-\mu_2$. Then
\begin{align}
\label{fockc1}(\omega(g)\phi, \phi)=&~\|\phi\|^2 (\xi\bar{\xi}')^{\mu_1+\frac{1}{2}}(\eta\bar{\eta}'\zeta\bar{\zeta}')^{\mu_2+\frac{1}{2}}(\gamma\bar{\gamma}')^{\alpha-\frac{1}{2}}(\cosh t)^{-\mu_2-\alpha-3} \\
&\times (\cosh^{-1}t \cos \theta \cos \theta' + \zeta\bar{\zeta}'\sin \theta \sin \theta')^r.\nonumber
\end{align}

\noindent (C2) $\phi=z_{11}^\mu z_{23}^\nu z_{32}^\beta$, $\sigma=(\mu+\frac{1}{2}, -\nu+\frac{1}{2}, -\beta-\frac{1}{2})$. Then
\begin{align}
\label{fockc2}&(\omega(g)\phi,\phi)=\|\phi\|^2 (\xi\bar{\xi}')^{\mu+\frac{1}{2}}(\bar{\eta}\eta')^{\nu-\frac{1}{2}}(\bar{\gamma}\gamma')^{\beta+\frac{1}{2}}(\zeta\bar{\zeta}')^{\frac{1}{2}}(\cosh t)^{-\beta-3} \\
&\quad\times (\cosh^{-1} t \cos \theta \cos \theta' + \zeta\bar{\zeta}'\sin \theta \sin \theta')^\mu(\cosh^{-1}t \sin\theta \sin \theta' + \bar{\zeta}\zeta'\cos\theta \cos\theta' )^\nu.\nonumber
\end{align}

\noindent (D1) $\phi=z_{23}^r (z_{12}z_{23}-z_{22}z_{13})^{\nu_2} z_{31}^{\beta}$, $\sigma=(-\nu_2-\frac{1}{2}, -\nu_1-\frac{1}{2}, -\beta+\frac{1}{2})$
with $r=\nu_1-\nu_2$. Then
\begin{align}
\label{fockd1}(\omega(g)\phi,\phi)=&~\|\phi\|^2 (\bar{\xi}\xi')^{\nu_2+\frac{1}{2}}(\bar{\eta}\eta'\bar{\zeta}\zeta')^{\nu_1+\frac{1}{2}}(\bar{\gamma}\gamma')^{\beta-\frac{1}{2}}(\cosh t)^{-\nu_2-\beta-3} \\
&\times ( \cos \theta \cos \theta' +\cosh^{-1}t \zeta\bar{\zeta}'\sin \theta \sin \theta')^r. \nonumber
\end{align}

\noindent (D2) $\phi=z_{11}^\mu z_{23}^\nu z_{32}^\alpha$, $\sigma=(\mu-\frac{1}{2}, -\nu-\frac{1}{2}, \alpha+\frac{1}{2})$. Then
\begin{align}
\label{fockd2}&(\omega(g)\phi,\phi)=\|\phi\|^2 (\xi\bar{\xi}')^{\mu-\frac{1}{2}}(\bar{\eta}\eta')^{\nu+\frac{1}{2}}(\gamma\bar{\gamma}')^{\alpha+\frac{1}{2}}(\bar{\zeta}\zeta')^{\frac{1}{2}}(\cosh t)^{-\alpha-3} \\
&\quad\times (\cosh^{-1} t \cos \theta \cos \theta' + \zeta\bar{\zeta}'\sin \theta \sin \theta')^\mu(\cosh^{-1}t \sin\theta \sin \theta' + \bar{\zeta}\zeta'\cos\theta \cos\theta' )^\nu.\nonumber
\end{align}

\section{Matrix coefficients of the discrete series of $U(2,1)$}

Let $(\pi_\lambda, H_\lambda)$ be a discrete series representation of $U(2,1)$ with Harish-Chandra parameter $\lambda$. The aim of this section is to calculate explicitly the matrix coefficients of $\pi_\lambda$. We shall follow the method in \cite{HaKO}. We first remark that it is essentially equivalent to work with $SU(2,1)$ instead of $U(2,1)$, which we shall do for convenience. In particular the matrix coefficients for these two groups have the same restriction to $A$. Until subsection 4.5, we let $G=SU(2,1)$ and for convenience we also adopt the same notations $\frak{g}$, $\frak{k}$, $\frak{p}$ etc. for $SU(2,1)$.

\subsection{Discrete series and $K$-types}

Take a maximal compact subgroup $K=S(U(2)\times U(1))\cong U(2)$, with the obvious isomorphism given by $(g,\det g^{-1})\mapsto g$.  Let $\frak{g}=\frak{k}+\frak{p}$
be the Cartan decomposition. The maximal abelian subspace $\frak{a}$ of $\frak{p}$, the root systems $\Sigma$, $\Sigma^+$, and the root subspaces $\frak{g}_\beta$ are the same as in Section 3.1, while
 $\frak{c}(\frak{a})$ is now spanned by $H_0=\sqrt{-1}\textrm{diag}(1,-2,1)$. Denote by $\frak{n}$ the nilpotent subalgebra $\sum_{\beta\in\Sigma^+}\frak{g}_\beta$. Then we have the Iwasawa decomposition $G=NAK$ with $N=\exp\frak{n}$.

The discrete series of $SU(2,1)$ can be parametrized as follows. Take the compact Cartan subalgebra $\frak{t}\subset\frak{k}$ given by
\[
\frak{t}=\mathbb{R}\sqrt{-1}Z_{12} +\mathbb{R} \sqrt{-1}Z_{23}
\]
with $Z_{ij}=X_{ii}-X_{jj}$. The absolute root system of type $A_2$ is given by
\[
\Delta=\Delta(\frak{g}_{\mathbb{C}},\frak{t}_{\mathbb{C}})=\{e_i-e_j, 1\leq i\neq j\leq 3\}.
\]
Fix the compact positive root $\Delta_c^+=\{e_1-e_2\}$. The Weyl group $W=W(\frak{g}_{\mathbb{C}}, \frak{t}_{\mathbb{C}})\cong S_3$, and the compact Weyl group
is $W_c\cong S_2$. There are three positive subsystems containing $\Delta_c^+$,
\[
\left\{
\begin{aligned}
& \Delta_{I}^+=\{e_1-e_2, e_2-e_3, e_1-e_3\},\\
& \Delta_{II}^+=\{e_3-e_1, e_1-e_2, e_3-e_2\}=w_{ II}\Delta_{ I}^+,\\
& \Delta_{III}^+=\{e_1-e_3, e_3-e_2, e_1-e_2\}=w_{ III} \Delta_{ I}^+,
\end{aligned}\right.
\]
where $w_J\in W$ are given by $w_{II}=(132)$, $w_{III}= (23)$. The space of Harish-Chandra parameters $\Xi_c$ is given by
\[
\Xi_c=\{\lambda\in \sqrt{-1}\frak{t}^*:  \lambda\textrm{ is }\Delta\textrm{-regular, }K\textrm{-analytically integral and }\Delta_c^+\textrm{-dominant}\}.
\]
Denote by $\Delta_{J,n}^+$ the non-compact roots in $\Delta^+_J$.

Put $\Xi_J=\{\lambda\in \Xi_c: \lambda\textrm{ is }\Delta_J^+\textrm{-dominant}\}$. Then $\Xi_c=\Xi_I\cup \Xi_{II}\cup \Xi_{III}$. Let
\[
\rho_{J}=\frac{1}{2}\sum_{\beta\in \Delta^+_J}\beta,\quad \rho_c=\frac{1}{2}\sum_{\beta\in \Delta^+_c}\beta,\quad \rho_{J,n}=\frac{1}{2}\sum_{\beta\in \Delta^+_{J,n}}\beta
\]
be the half sum of positive roots, compact positive roots and non-compact positive roots respectively. For $\lambda\in \Xi_J$, let
$(\pi_\lambda, H_\lambda)$ be the corresponding discrete series representation. Then the Blatter parameter of $\pi_\lambda$ is
\[
\Lambda=\lambda+\rho_{J}-2\rho_c.
\]
Let $\tau=(\tau_\Lambda, V_\Lambda)$ be the minimal $K$-type of $\pi_\lambda$. Write $\Lambda=(\Lambda_1, \Lambda_2, \Lambda_3)\in \sqrt{-1}\frak{t}^*$, and let $r, s$ be the parameters given by (\ref{rs}). Then under the isomorphism $K\cong U(2)$, we have
\[
\tau_\Lambda\cong \rho_r\otimes \det{}^s,
\]
where $\rho_r$ is realized as (\ref{rhor}). Let $f_i=x^i y^{r-i}$, $i=0,1,\ldots, r$ be the standard basis of $\rho_r$, and denote by $f^\Lambda_i$ the corresponding basis of $V_\Lambda$.
The action of $\frak{k}$ is then given by
\begin{equation}\label{kaction}
\left\{
\begin{aligned}
&\tau(Z_{12})f^\Lambda_i= (2i-r)f^\Lambda_i, \quad \tau(Z_{23})f^\Lambda_i=(r+s-i)f^\Lambda_i, \\
&\tau(X_{12})f^\Lambda_i= (r-i)f^\Lambda_{i+1}, \quad \tau(X_{21})f^\Lambda_i= i f^{\Lambda}_{i-1}.
\end{aligned}\right.
\end{equation}
For convenience we shall also write $(\tau_{r,s}, V_{r,s})$ for $(\tau_\Lambda, V_\Lambda)$, and $f^{r,s}_i$ for $f^\Lambda_i$. When $s=0$ we further write $f^r_i$ for $f^{r,0}_i$.

We have the decomposition (cf.  \cite[Lemma 4.1]{Y})
\[
\rho_r\otimes \rho_1\cong \rho_{r+1}\oplus \tau_{r-1,1} =\rho_{r+1}\oplus [\rho_{r-1}\otimes \det(\cdot)].
\]
Let $P^\pm_r$ be the projections from $\rho_r\otimes \rho_1$ to $\rho_{r+1}$ and $\tau_{r-1,1} $. Then with above notations we have (cf. \cite[Lemma 3.11]{Ha})

\begin{lemma}\label{proj}
Let $0\leq i\leq r$ and $e=0,1$. Then
\[
\left\{
\begin{aligned} & P^+_r(f^r_i\otimes f^1_e)=f^{r+1}_{i+e},\\
& P^-_r(f^r_i\otimes f^1_e)=(r\delta_{e1}-i)f^{r-1,1}_{i-1+e}.
\end{aligned}
\right.
\]
\end{lemma}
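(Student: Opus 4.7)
The plan is to give explicit realizations of both projections as elementary polynomial operations on $\mathbb{C}[x,y]$ and then read off the formulas on the standard basis.

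For $P^+_r$, I would identify it with the polynomial multiplication map $\mu:\mathbb{C}[x,y]_r\otimes\mathbb{C}[x,y]_1\to\mathbb{C}[x,y]_{r+1}$, $\mu(f\otimes g)=fg$. Equivariance follows at once from the right-action formula $\rho_r(g)f(x,y)=f((x,y)g)$ used in (\ref{rhor}). The map $\mu$ is surjective with $\dim\mathrm{im}(\mu)=r+2=\dim\rho_{r+1}$, so under the multiplicity-free decomposition $\rho_r\otimes\rho_1\cong\rho_{r+1}\oplus\tau_{r-1,1}$ it must coincide with $P^+_r$. The elementary computation $\mu(f^r_i\otimes f^1_e)=x^iy^{r-i}\cdot x^ey^{1-e}=x^{i+e}y^{r+1-i-e}=f^{r+1}_{i+e}$ then yields the first formula.

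For $P^-_r$, I would introduce the contraction map $\delta:\mathbb{C}[x,y]_r\otimes\mathbb{C}[x,y]_1\to\mathbb{C}[x,y]_{r-1}$ defined by
\[
\delta\bigl(f\otimes(\alpha x+\beta y)\bigr)=(\alpha\,\partial_y-\beta\,\partial_x)f,
\]
viewed as a map into $\rho_{r-1}\otimes\det$. The main technical point is that $\delta$ is $U(2)$-equivariant once the $\det$-twist is inserted on the target: for $g=[g_{ij}]\in U(2)$ a chain-rule computation shows that the new coefficients $(\alpha',\beta')=(\alpha,\beta)g$ and the transformed differential operators $\partial_x,\partial_y$ combine to produce exactly the factor $\det g$ that promotes $\rho_{r-1}$ to $\rho_{r-1}\otimes\det$. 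Equivalently, the standard symplectic form $\omega(v_1,v_2)=\alpha_1\beta_2-\beta_1\alpha_2$ on $V_1$ is invariant up to $\det g$ under $v\mapsto vg$, and $\delta$ is just the contraction of this form with the gradient of $f$. By Schur, $\delta$ is a scalar multiple of $P^-_r$, and the scalar is fixed by evaluation on basis vectors: for $e=0$ (so $v=y$) one gets $\delta(f^r_i\otimes f^1_0)=-\partial_x(x^iy^{r-i})=-i\,f^{r-1,1}_{i-1}$, while for $e=1$ (so $v=x$) one gets $\delta(f^r_i\otimes f^1_1)=\partial_y(x^iy^{r-i})=(r-i)\,f^{r-1,1}_i$. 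These combine uniformly into $(r\delta_{e1}-i)\,f^{r-1,1}_{i-1+e}$, which is the second formula and simultaneously pins down the normalizing constant to be $1$.

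The hard part is the careful verification of the $\det$-twisted equivariance of $\delta$, since the determinant factor on the target has to be tracked through the chain rule; once this is checked (either by the short direct computation on a generating set of $U(2)$, or abstractly via the isomorphism $V_1^*\cong V_1\otimes\det^{-1}$ for the defining representation), the lemma reduces to the elementary polynomial identities above, and the stated basis conventions from (\ref{rhor}) and (\ref{kaction}) make the result take exactly the claimed form.
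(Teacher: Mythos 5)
Your argument is correct. Note that the paper itself gives no proof of this lemma: it is quoted directly from Hayata (\cite[Lemma 3.11]{Ha}), so there is nothing internal to compare against, and a self-contained verification like yours is a legitimate way to justify it. Your identification of $P^+_r$ with polynomial multiplication and of $P^-_r$ with the contraction $f\otimes(\alpha x+\beta y)\mapsto(\alpha\,\partial_y-\beta\,\partial_x)f$ checks out: with the convention $\rho_r(g)f(x,y)=f\bigl((x,y)g\bigr)$ the coefficient vector of a linear form transforms by $(\alpha,\beta)\mapsto(\alpha,\beta)\,{}^tg$, and the chain rule gives $\alpha'c-\beta'a=-\beta\det g$ and $\alpha'd-\beta'b=\alpha\det g$, which is exactly the $\det$-twist needed to land in $\tau_{r-1,1}=\rho_{r-1}\otimes\det$; the basis evaluations then reproduce both stated formulas, including the vanishing boundary cases $i=0$, $e=0$ and $i=r$, $e=1$. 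The only point worth making explicit is that an equivariant ``projection'' onto an irreducible summand of a multiplicity-free tensor product is canonical only up to a nonzero scalar, so the displayed formulas are really the \emph{definition} of the normalization of $P^\pm_r$; your explicit maps realize that normalization, and since the later use of these operators is only through the kernels of the Schmid operators, the choice of scalar is immaterial anyway.
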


The adjoint representation Ad of $K$ on $\frak{p}_{\mathbb{C}}$ is decomposed into a direct sum of two irreducible subrepresentations $\frak{p}_{\mathbb{C}}=\frak{p}_++
\frak{p}_-$, where
\[
\frak{p}_+=\mathbb{C}X_{13}+\mathbb{C}X_{23},\quad \frak{p}_-={}^t\frak{p}_+.
\]
For later use we fix the $K$-isomorphisms
\begin{equation}\label{ad}
\left\{
\begin{aligned}
&\textrm{Ad}_+=\textrm{Ad}\big|_{\frak{p_+}}\cong \tau_{1,1},\quad (X_{13}, X_{23})\mapsto (f^{1,1}_1, f^{1,1}_0)\\
&\textrm{Ad}_-=\textrm{Ad}\big|_{\frak{p_-}}\cong \tau_{1,-2},\quad(X_{31}, X_{32})\mapsto (f^{1,-2}_0, - f^{1,-2}_1).
\end{aligned}\right.
\end{equation}
The irreducible decomposition of $\frak{k}_{\mathbb{C}}$-module $V_\Lambda\otimes \frak{p}_{\mathbb{C}}$ is therefore given by
 \begin{align*}
& V_\Lambda\otimes\frak{p}_{\mathbb{C}} =V_\Lambda\otimes\frak{p}_+\oplus  V_\Lambda\otimes\frak{p}_-,\\
& V_\Lambda\otimes \frak{p}_+\cong V_{r+1, s+1}\oplus V_{r-1,s+2},\\
& V_\Lambda\otimes\frak{p}_-\cong V_{r+1,s-2}\oplus V_{r-1,s-1}.
  \end{align*}
Using $P_r^\pm$ in Lemma \ref{proj}, we obtain the projectors $P_\tau^\pm$ defined on $V_\Lambda\otimes\frak{p}_{\mathbb{C}}$:
\begin{equation}\label{projector}
\left\{
\begin{aligned}
&P_\tau^+: V_\Lambda\otimes \frak{p}_+\to V_{r+1,s+1},\quad V_\Lambda\otimes \frak{p}_-\to V_{r+1,s-2},\\
&P_\tau^-: V_\Lambda\otimes \frak{p}_+\to V_{r-1,s+2},\quad V_\Lambda\otimes \frak{p}_-\to V_{r-1,s-1}.
\end{aligned}
\right.
\end{equation}
The explicit formulas for $P_\tau^\pm$ in terms of standard basis can be derived easily from Lemma \ref{proj} and (\ref{ad}).

The contragredient representation $\tau^\vee=(\tau_\Lambda^\vee, V^\vee_\Lambda)$ of $(\tau_\Lambda, V_\Lambda)$ is isomorphic to
$(\tau_{\Lambda^\vee}, V_{\Lambda^\vee})$, where $\Lambda^\vee=(-\Lambda_2,-\Lambda_1, -\Lambda_3)$. Then $\tau_{\Lambda^\vee}=\tau_{r, -r-s}$. Let $h_i^\Lambda \in V_\Lambda^\vee$ be the dual basis of $f^\Lambda_i$, i.e. $\langle f^\Lambda_i, h^\Lambda_j\rangle=\delta_{ij}$. Then the correspondence between the basis,
\begin{equation}\label{iso}
h^\Lambda_i\mapsto (-1)^i{r \choose i}f^{\Lambda^\vee}_{r-i}
\end{equation}
determines the  isomorphism $\tau^\vee\cong \tau_{\Lambda^\vee}$, which is unique up to constant (cf. \cite[Proposition 3.9]{Ha}). In particular one has the paring $\langle f^\Lambda_i, f_{r-i}^{\Lambda^\vee}\rangle=(-1)^i{r\choose i}^{-1}$.

\subsection{$(\tau,\tau^\vee)$-matrix coefficients and Schmid operator}

 We shall identify the representation spaces $V_\Lambda$, $V_\Lambda^\vee$ with their unique images in $H_\lambda$, $H_\lambda^\vee$ respectively.
 Then the matrix coefficient of $\pi_\lambda$ is given by
 \[
\langle \pi_\lambda(g)u, u^\vee\rangle
 \]
 for $u\in V_\Lambda$, $u^\vee\in V_\Lambda^\vee$. For convenience we consider a vector-valued function
 \[
 \phi_\lambda(g)=\sum_{v,w}\langle\pi_\lambda(g) v^\vee, w^\vee\rangle w\otimes v,
 \]
 where $\{w\}$ (resp. $\{v\}$) runs over a basis of $V_\Lambda$ (resp. $V_{\Lambda^\vee}$) and $\{w^\vee\}$ (resp. $\{v^\vee\}$) is its dual basis. Note that we shall identify $V^\vee_{\Lambda^\vee}$ with $V_\Lambda$ whenever it is necessary. Then $\phi_\lambda$ is independent of the choice of basis, and belongs to the function space
 \[
 C^\infty_{\tau, \tau^\vee}(K\backslash G/ K)=\{\phi: G\to V_{\Lambda}\otimes V_{\Lambda^\vee} : \phi(k_1gk_2)=\tau(k_1)\otimes \tau^\vee(k_2^{-1})\cdot\phi(g), k_i\in K\}.
 \]
 The spaces $C^\infty_\tau(K\backslash G)$ and $C^\infty_{\tau^\vee}(G/K)$ are also defined in the obvious manner.

By composing $\phi_\lambda$ with the natural map $V_\Lambda\otimes V_{\Lambda^\vee}\cong V_\Lambda\otimes V_\Lambda^\vee\to \mathbb{C}$, we obtain
a scalar-valued function
\begin{equation}\label{canonical}
\psi_\lambda(g)=\sum_{v,w}\langle\pi_\lambda(g) v^\vee, w^\vee\rangle \langle w, v\rangle \in  C^\infty(G),
\end{equation}
which is of right $K$-type $\tau$ and left $K$-type $\tau^\vee$, and satisfies
\[
\left\{\begin{aligned} &\psi_\lambda(kgk^{-1})=\psi_\lambda(g),\quad g\in G, ~k\in K,\\
& \psi_\lambda(1)=\dim\tau.
\end{aligned}\right.
\]
Let $P_\tau$ be the orthogonal projection to the $\tau$-component of $\pi_\lambda$, then we see that
\[
\psi_\lambda(g)=\textrm{Tr}(P_\tau\pi_\lambda(g)P_\tau),\quad g\in G.
\]
This is the canonical matrix coefficient considered in \cite{HLS}, which differs from that in \cite{F-J} by a factor of
$\dim \tau$ (cf. \cite[Lemma 3.1, Lemma 3.3]{L1}). To calculate  $\phi_\lambda(g)$ explicitly, we take the standard basis $\{f^\Lambda_i\}_{0\leq i\leq r}$, $\{f^{\Lambda^\vee}_j\}_{0\leq j\leq r}$ of $V_\Lambda$,
$V_{\Lambda^\vee}$ respectively. Then we can write $\phi_\lambda(g)$ as
\[
\phi_\lambda(g)=\sum_{0\leq i, j\leq r}c_{ij}(g)f^\Lambda_i\otimes f^{\Lambda^\vee}_j
\]
with the coefficients $c_{ij}(g)=\langle \pi_\lambda(g) h^{\Lambda^\vee}_j, h^\Lambda_i \rangle$.
Due to the Cartan decomposition $G=KAK$, $c_{ij}(g)$ is uniquely determined by its restriction to $A$.

\begin{lemma}
$c_{ij}(a)=0$ unless $i+j=r$.
\end{lemma}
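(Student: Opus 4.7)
I will exploit the centralizer $M = Z_K(A)$: since $m^{-1}am = a$ for $m\in M$ and $a\in A$, the $(\tau,\tau^\vee)$-equivariance of $\phi_\lambda$ will force $\phi_\lambda(a)\in V_\Lambda\otimes V_{\Lambda^\vee}$ to be fixed under the diagonal $M$-action, and reading off this fixed-point condition in the standard basis will produce exactly the constraint $i+j=r$.

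First, I identify $\frak m=\frak k\cap\frak c(\frak a)$. From (\ref{rootspace}) in the $SU(2,1)$ setup of Section 4.1, one has $\frak c(\frak a)=\mathbb{R}H_0$ with $H_0=\sqrt{-1}\,\mathrm{diag}(1,-2,1)\in\frak k$, so $\frak m=\mathbb{R}H_0$. Rewriting $H_0 = \sqrt{-1}(Z_{12}-Z_{23})$ puts it in the form needed to apply the $K$-type action formulas (\ref{kaction}).

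Second, for $m\in M$ the equivariance law together with $m^{-1}am=a$ gives $\phi_\lambda(a)=(\tau(m^{-1})\otimes\tau^\vee(m^{-1}))\phi_\lambda(a)$. Differentiating at $m=1$ in the direction of $H_0$ produces
\[
\bigl(\tau(H_0)\otimes\mathrm{id}+\mathrm{id}\otimes\tau^\vee(H_0)\bigr)\phi_\lambda(a)=0.
\]
Using (\ref{kaction}), I compute $\tau(H_0)f_i^\Lambda = \sqrt{-1}(3i-2r-s)f_i^\Lambda$, and via the identification (\ref{iso}) of $\tau^\vee$ with $\tau_{\Lambda^\vee}$ (whose $K$-type parameters are $(r,-r-s)$) the analogous formula gives $\tau^\vee(H_0)f_j^{\Lambda^\vee}=\sqrt{-1}(3j-r+s)f_j^{\Lambda^\vee}$. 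Substituting $\phi_\lambda(a)=\sum_{i,j}c_{ij}(a)f_i^\Lambda\otimes f_j^{\Lambda^\vee}$ and matching the coefficient of each basis tensor, the two eigenvalues sum to $3\sqrt{-1}(i+j-r)$, so $3\sqrt{-1}(i+j-r)c_{ij}(a)=0$. This forces $c_{ij}(a)=0$ whenever $i+j\neq r$, as claimed.

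The only delicate point is keeping track of the $H_0$-action on the dual representation through the isomorphism (\ref{iso}); once this is in place, the argument reduces to a single eigenvalue match on $H_0$. Conceptually the lemma is just the statement that $\phi_\lambda|_A$ takes values in the $M$-fixed part of $V_\Lambda\otimes V_{\Lambda^\vee}\cong\mathrm{End}(V_\Lambda)$, which, since $H_0$ has $r+1$ distinct eigenvalues on $V_\Lambda$, is precisely the $(r+1)$-dimensional subspace spanned by the pairs $(i,r-i)$.
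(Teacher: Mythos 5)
Your proof is correct and follows essentially the same route as the paper: both exploit the invariance of $\phi_\lambda(a)$ under the centralizer $Z_K(A)=\{\mathrm{diag}(u,\bar u^2,u)\}$, and you simply make explicit (infinitesimally, via $H_0=\sqrt{-1}(Z_{12}-Z_{23})$) the eigenvalue computation that the paper leaves implicit. The weight calculation $3\sqrt{-1}(i+j-r)$ checks out against (\ref{kaction}) applied to $\tau_{r,s}$ and $\tau_{r,-r-s}$.
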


\begin{proof}
The centralizer of $A$ in $K$ is
\[
Z_K(A)=\{m=\textrm{diag}(u,\bar{u}^2, u) : u\in U(1)\}.
\]
Applying spherical properties of $\phi_\lambda\in C^\infty_{\tau,\tau^\vee}(K\backslash G/K)$ to the equation
\[
\phi_\lambda(mam^{-1})=\phi_\lambda(a),\quad a\in A, ~m\in Z_K(A)
\]
yields the assertion.
\end{proof}

This lemma enables us to write
\begin{equation}\label{ci}
\phi_\lambda(a)=\sum^r_{i=0}c_i(a)f^\Lambda_i\otimes f^{\Lambda^\vee}_{r-i},
\end{equation}
where $c_i(a):=c_{i,r-i}(a)$.

Now let us recall the Schmid operator from \cite{S}.  Since the adjoint representation Ad of $K$ on $\frak{p}_{\mathbb{C}}$ has all weights of multiplicity
one, it follows that we have a decomposition
\[
\tau_\Lambda\otimes \textrm{Ad}= \sum_{\beta\in \Delta_n}m_\beta \tau_{\Lambda+\beta}
\]
with each $m_\beta$ equal to $0$ or $1$. In fact we see that each $m_\beta$ equals $1$ in our case. Let $\tau^-_\Lambda$
be the subrepresentation of this tensor product given by
\begin{equation}\label{tau-}
\tau^-_\Lambda=\sum_{\beta\in \Delta_{J,n}^+}\tau_{\Lambda-\beta}.
\end{equation}
Then $\tau^-_\Lambda$ acts in a subspace $V^-_\Lambda$ of $V_\Lambda\otimes \frak{p}_{\mathbb{C}}$, and we let
\begin{equation}\label{pj}
P^J_\tau: V_\Lambda\otimes\frak{p}_{\mathbb{C}}\to V^-_\Lambda = \bigoplus_{\beta\in \Delta_{J,n}^+}V_{\Lambda-\beta}
\end{equation}
be the orthogonal projection, which is $K$-equivariant, i.e.
\[
P^J_\tau \circ (\tau\otimes \mathrm{Ad})(k)=\tau_\Lambda^-(k)\circ P^J_\tau,\quad k\in K.
\]

 The Hermitian form
 \[
 \langle X, Y\rangle =- B(X, \theta \overline{Y})
 \]
 gives a positive definite inner product on $\frak{g}_{\mathbb{C}}$, where $B$ is the Killing form. Let $\{X_i\}$ be an orthonormal basis of $\frak{p}$ with respect
 to this inner product. Define
 \begin{equation}\label{nabla}
 \left\{
 \begin{aligned}
 &\nabla^L\phi(g)=\sum_i  l_{X_i} \phi(g)\otimes X_i,\quad \phi\in C^\infty_\tau(K\backslash G),\\
 &\nabla^R\phi(g)=\sum_i r_{X_i}\phi(g)\otimes X_i,\quad \phi\in C^\infty_{\tau^\vee}(G/K)
 \end{aligned}\right.
\end{equation}
 where $l_X$ (resp. $r_X$) is the left (resp. right) differentiation given by
 \[
 l_X\phi(g)=\frac{d}{dt}\phi\big(\exp(-tX)g\big)_{t=0} \quad \textrm{resp.}\quad r_X\phi(g)=\frac{d}{dt}\phi\big(g\exp(tX)\big)_{t=0}.
 \]
 Obviously $\nabla^L$ (resp. $\nabla^R$) is independent of the choice of basis, and is equivariant with respect to right (resp. left) translation by $G$.

If we write $\lambda=(\lambda_1,\lambda_2,\lambda_3)$ and $\lambda^\vee=(-\lambda_2, -\lambda_1, -\lambda_3)$, then $\pi_{\lambda}^\vee\cong \pi_{\lambda^\vee}$, the discrete series with Harish-Chandra parameter $\lambda^\vee$. Note that $\lambda\in \Xi_J$ implies  $\lambda^\vee\in \Xi_{J^\vee}$, where $(J, J^\vee)=(I, II)$, $(II, I)$ or $(III, III)$. The Schmid operators are defined by
\begin{equation}\label{schmid}
\left\{
\begin{aligned}
&{\mathscr D}^{J, L}_{\tau}: \quad C^\infty_{\tau}(K\backslash G)\to C^\infty_{\tau^-_\Lambda}(K\backslash G),\quad \phi\mapsto P^J_\tau \circ (\nabla^L\phi),\\
&{\mathscr D}^{J^\vee, R}_{\tau^\vee}: \quad  C^\infty_{\tau^\vee}(G/K)\to C^\infty_{\tau^-_{\Lambda^\vee}}(G/K),\quad \phi\mapsto P^{J^\vee}_{\tau^\vee}\circ(\nabla^R\phi)
\end{aligned}\right.
 \end{equation}
where $\tau^-_{\Lambda^\vee}$ and $P^{J^\vee}_{\tau^\vee}$ are defined in a manner similar to $\tau^-_\Lambda$ and $P^J_\tau$, i.e. defined by (\ref{tau-}) and (\ref{pj}) respectively with $\Lambda, J$ replaced by $\Lambda^\vee, J^\vee$.
Since $\pi_\lambda^\vee$ contains its minimal $K$-type $\tau^\vee$ with multiplicity one, for each $(\frak{g}, K)$-embedding $\iota: \pi_\lambda^\vee\hookrightarrow C^\infty_\tau(K\backslash G)$ there corresponds to a unique $\phi_\iota \in C^\infty_\tau(K\backslash G)$ such that 
\[
\iota(v^\vee)=\langle \phi_\iota(\cdot), v^\vee\rangle
\]
for all $v^\vee\in \tau^\vee$.
In \cite{Y} it is proved that $\iota\mapsto \phi_\iota$ gives an isomorphism
\[
 \textrm{Hom}_{(\frak{g},K)}(\pi^\vee_\lambda, C^\infty_{\tau}(K\backslash G))\cong\textrm{Ker}( {\mathscr D}^{J, L}_{\tau})
\]
and similarly there is another isomorphism
\[
 \textrm{Hom}_{(\frak{g},K)}(\pi_\lambda, C^\infty_{\tau^\vee}(G/K))\cong\textrm{Ker}( {\mathscr D}^{J^\vee, R}_{\tau^\vee}).
\]
This pair of isomorphisms is crucial for our computation. It follows that $\phi_\lambda$ lies in both kernel spaces, hence it can be determined by solving differential equations.

\subsection{Differential equations for spherical functions}

We shall calculate the $A$-radial part of the Schmid operator. Let us choose the following orthogonal basis of $\frak{p}$,
\[
\{X_{ij}+X_{ji}, \sqrt{-1}(X_{ij}-X_{ji})\}_{i=1,2,~j=3}.
\]
Then we see that $\nabla^{L/R}$ can be decomposed into $\nabla^{L/R}_++\nabla^{L/R}_-$ along the decomposition $\frak{p}_{\mathbb{C}}=\frak{p}_++\frak{p}_-$, where up to a constant we have
\[
\nabla^L_+\phi(a)=\sum_{i=1, 2,~j=3} l_{X_{ji}}\phi(a)\otimes X_{ij},\quad \nabla^L_-\phi(a)=\sum_{i=1, 2,~j=3} l_{X_{ij}}\phi(a)\otimes X_{ji}.
\]
The formulas for $\nabla^R_{\pm}$ are similar. To calculate the left differentiation $l_X$, we need the following lemma (cf. \cite[p.307]{HaKO}).

\begin{lemma}\label{iwasawa}
For $X\in\frak{p}_{\mathbb{C}}$, let $X=\sum_{\beta\in \Sigma^+} X_\beta+X_A +X_K$ be the Iwasawa decomposition. Decompose
$X_\beta=X_\beta^- +X_\beta^+$ along $\frak{p}_{\mathbb{C}} +\frak{k}_{\mathbb{C}}$. Then for $a\in A^+$,
\[
X=X_A+\sum_{\beta\in\Sigma^+} \big(\big[ \sinh \beta(\log a)\big]^{-1}\mathrm{Ad }(a) X_{\beta}^+ - \coth \beta(\log a) X_\beta^+\big).
\]
\end{lemma}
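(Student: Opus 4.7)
The plan is to reduce the identity to a direct algebraic manipulation using the $\theta$-stability of the relevant subspaces together with the root-space eigenvalue property $\operatorname{Ad}(a)Y_\beta = e^{\beta(\log a)} Y_\beta$ for $Y_\beta \in \mathfrak{g}_\beta$.

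First, I would write the Iwasawa decomposition $X = X_A + X_N + X_K$ with $X_N = \sum_{\beta \in \Sigma^+} X_\beta$ and each $X_\beta \in \mathfrak{g}_\beta$. Since $X \in \mathfrak{p}_{\mathbb{C}}$ forces $\theta X = -X$, and since $\theta X_A = -X_A$, $\theta X_K = X_K$, $\theta \mathfrak{g}_\beta = \mathfrak{g}_{-\beta}$, applying $\theta$ to the Iwasawa decomposition and adding the two equations gives $X_K = -\tfrac{1}{2}(X_N + \theta X_N) = -\sum_{\beta \in \Sigma^+} X_\beta^+$, where $X_\beta^+ = \tfrac{1}{2}(X_\beta + \theta X_\beta)$ is the $\mathfrak{k}_{\mathbb{C}}$-component of $X_\beta$ under $\mathfrak{g}_{\mathbb{C}} = \mathfrak{k}_{\mathbb{C}} + \mathfrak{p}_{\mathbb{C}}$. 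Consequently $X - X_A = X_N + X_K = \sum_{\beta \in \Sigma^+}(X_\beta - X_\beta^+) = \sum_{\beta \in \Sigma^+} X_\beta^-$.

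Next, for $a \in A^+$ and $t = \beta(\log a)$, I would compute
\[
\operatorname{Ad}(a) X_\beta^+ = \tfrac{1}{2}\bigl(e^{t} X_\beta + e^{-t}\, \theta X_\beta\bigr),
\]
and verify by direct substitution that
\[
\frac{1}{\sinh t}\operatorname{Ad}(a)X_\beta^+ - \coth t \cdot X_\beta^+ = \tfrac{1}{2\sinh t}\bigl[(e^t - \cosh t)X_\beta + (e^{-t} - \cosh t)\theta X_\beta\bigr] = \tfrac{1}{2}(X_\beta - \theta X_\beta) = X_\beta^-.
\]
Summing over $\beta \in \Sigma^+$ then yields $\sum_\beta\bigl(\sinh \beta(\log a)^{-1}\operatorname{Ad}(a) X_\beta^+ - \coth \beta(\log a)\, X_\beta^+\bigr) = \sum_\beta X_\beta^- = X - X_A$, which is exactly the claimed identity.

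Since $X_\beta^+ \in \mathfrak{k}_{\mathbb{C}}$, note that along the decomposition the first term $\operatorname{Ad}(a)X_\beta^+$ genuinely mixes $\mathfrak{p}_{\mathbb{C}}$- and $\mathfrak{k}_{\mathbb{C}}$-parts, and the magic of the formula is that the precise coefficients $(\sinh t)^{-1}$ and $-\coth t$ produce cancellation of the $\mathfrak{k}_{\mathbb{C}}$-contribution, leaving only the $\mathfrak{p}_{\mathbb{C}}$-piece $X_\beta^-$. No real obstacle arises: the argument is purely linear algebra on root spaces; the only care needed is tracking signs under $\theta$ and invoking $a \in A^+$ so that $\sinh \beta(\log a) \neq 0$ for all $\beta \in \Sigma^+$, which is automatic since $\beta(\log a) > 0$ on the open Weyl chamber.
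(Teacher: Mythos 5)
Your proof is correct and follows essentially the same route as the paper: both reduce the identity to $X - X_A = \sum_\beta X_\beta^-$ via the $\theta$-eigenspace decomposition and then check that the coefficients $(\sinh t)^{-1}$ and $-\coth t$ reproduce $X_\beta^-$ from $X_\beta^+$ and $\mathrm{Ad}(a)X_\beta^+$, using $\mathrm{Ad}(a)$-eigenvalues on root spaces. The only cosmetic difference is that the paper cites Knapp's Lemma 8.24 for the inversion formula where you verify it directly (and you also get the sign of $X_K=-\sum_\beta X_\beta^+$ right, which the paper's intermediate display misstates but does not use).
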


\begin{proof}  We have $X_\beta^-=\frac{1}{2}(X_\beta-\theta X_\beta),$
$X_\beta^+= \frac{1}{2}(X_\beta+\theta X_\beta)$. Since $X\in\frak{p}_{\mathbb{C}}$, we see that
\[
X=X_A+ \sum_{\beta\in \Sigma^+}X_\beta^-,\quad X_K=\sum_{\beta\in\Sigma^+}X_\beta^+.
\]
By [K, Lemma 8.24], it follows that
\[
X_\beta=\frac{2}{1-a^{2\beta}}X_\beta^+ - \frac{2a^\beta}{1-a^{2\beta}} \textrm{Ad}(a)X_\beta^+.
\]
The assertion follows from substituting the last equation into $X=X_A +\sum_\beta (X_\beta - X_\beta^+)$.
\end{proof}

Put $\tau_\pm = \tau\otimes \textrm{Ad}_\pm$. Applying Lemma \ref{iwasawa} to $l_X\phi(a)\otimes Y$ with
$\phi\in C^\infty_{\tau,\tau^\vee}(K\backslash G/ K)$, $Y\in \frak{p}_\pm$, we get
\begin{align}
\label{lx}&~l_X\phi(a)\otimes Y\\
=&~l_{X_A}\phi(a)\otimes Y+\sum_\beta \big(\big[\sinh \beta(\log a)\big]^{-1}l_{\mathrm{Ad }(a) X_{\beta}^+}-\coth \beta(\log a)l_{X_\beta^+}\big)\phi(a)\otimes Y\nonumber\\
=&~l_{X_A}\phi(a)\otimes Y-\sum_\beta \big(\big[\sinh \beta(\log a)\big]^{-1}r_{X_{\beta}^+}+\coth \beta(\log a)l_{X_\beta^+}\big)\phi(a)\otimes Y\nonumber\\
=&~ l_{X_A}\phi(a)\otimes Y + \sum_{\beta} \big(\big[\sinh \beta(\log a)\big]^{-1} \tau^\vee(X_\beta^+)+\coth \beta(\log a)\tau_\pm(X_\beta^+)\big)(\phi(a)\otimes Y)\nonumber\\
&-\sum_\beta \coth \beta(\log a)\phi(a)\otimes [X_\beta^+, Y].\nonumber
\end{align}
In the above, we view $\phi(a)\otimes Y$ as a vector in $(V_\Lambda\otimes \frak{p}_\pm)\otimes V_{\Lambda^\vee}$, $\tau^\vee$ acts on the factor $V_{\Lambda^\vee}$, and $\tau_\pm$ acts on the factor $V_\Lambda\otimes \frak{p}_\pm$. A similar formula for $r_X\phi(a)\otimes Y$ can be obtained if we replace $a$ by $a^{-1}$ in Lemma \ref{iwasawa}
and replace $\tau$ by $\tau^\vee$.

\begin{lemma}\label{iwasawa1}
With above notations,
\begin{align*}
&X_{13, 2\alpha}^+=-\frac{1}{2}Z_{13}, \quad X_{13, A}=\frac{1}{2}H,\quad X_{13, \alpha}=0\\
&X_{31, 2\alpha}^+=\frac{1}{2}Z_{13},\quad X_{31, A}=\frac{1}{2}H,\quad X_{13,\alpha}=0\\
&X_{23, \alpha}^+= -X_{21},\quad X_{23, A}=X_{23, 2\alpha}=0\\
&X_{32, \alpha}^+=X_{12},\quad X_{32,A}=X_{32, 2\alpha}=0.
\end{align*}
\end{lemma}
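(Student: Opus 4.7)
The lemma is a direct computation of the Iwasawa decomposition (and its $\mathfrak{p}/\mathfrak{k}$-refinement $X_\beta^\pm = \tfrac12(X_\beta \pm \theta X_\beta)$) for the four noncompact generators $X_{13}, X_{31}, X_{23}, X_{32} \in \mathfrak{p}_\mathbb{C}$. The plan is to first diagonalize $\mathrm{ad}(H)$ on $\mathfrak{g}_\mathbb{C}$, express each target vector in that eigenbasis together with its $\mathfrak{a}$-component, and then convert any $\mathfrak{g}_{-\beta}$-contribution into genuine Iwasawa form via the elementary identity $Y = -\theta Y + (Y + \theta Y)$ valid for $Y \in (\mathfrak{g}_{-\beta})_\mathbb{C}$, which produces a $\mathfrak{g}_\beta$-piece ($-\theta Y$) and a $\mathfrak{k}_\mathbb{C}$-piece ($Y + \theta Y$).

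Using $[X_{ij}, X_{kl}] = \delta_{jk} X_{il} - \delta_{li} X_{kj}$, one checks that $\mathrm{ad}(H)$ preserves a splitting $\mathfrak{g}_\mathbb{C} = W_0 \oplus W_1 \oplus W_2$, where $W_2 = \mathrm{span}\{X_{13}, X_{31}, X_{11}-X_{33}\}$ carries eigenvalues $0, \pm 2$, $W_1 = \mathrm{span}\{X_{12}, X_{21}, X_{23}, X_{32}\}$ carries eigenvalues $\pm 1$, and $W_0 = \mathrm{span}\{X_{22}, X_{11}+X_{33}\}$ lies in the kernel. Under this, the $\pm 2$-eigenvectors in $W_2$ are $E_3$ and $\theta E_3 = \sqrt{-1}(X_{11} + X_{13} - X_{31} - X_{33})$, satisfying the useful side identity $E_3 + \theta E_3 = 2\sqrt{-1}\,Z_{13}$. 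Inside $W_1$, the $+1$-eigenspace $\mathfrak{g}_\alpha$ is spanned by $X_{12}+X_{32}$ and $X_{21}-X_{23}$, its $\theta$-image giving $\mathfrak{g}_{-\alpha}$.

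For $X_{13}$, inverting the change of basis on $W_2$ yields
\[
X_{13} = \tfrac12 H + \tfrac{\sqrt{-1}}{4}\, E_3 - \tfrac{\sqrt{-1}}{4}\, \theta E_3,
\]
and applying the rewriting identity to the term in $\mathfrak{g}_{-2\alpha}$ converts this to true Iwasawa form. One reads off $X_{13,A} = \tfrac12 H$, $X_{13,\alpha} = 0$, $X_{13,2\alpha} = \tfrac{\sqrt{-1}}{2} E_3$, whence
\[
X_{13, 2\alpha}^+ = \tfrac12(X_{13,2\alpha} + \theta X_{13,2\alpha}) = \tfrac{\sqrt{-1}}{4}(E_3 + \theta E_3) = -\tfrac12 Z_{13}
\]
via the side identity. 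The case $X_{31}$ is obtained by flipping the signs in front of $E_3$ and $\theta E_3$, giving $X_{31,2\alpha}^+ = \tfrac12 Z_{13}$.

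For $X_{23}$ and $X_{32}$, the point is that both vectors lie entirely in $W_1$, which forces $X_A = X_{2\alpha} = 0$ and accounts for the four zero entries in the lemma. Decomposing $X_{23} = \tfrac12((X_{21}+X_{23}) - (X_{21}-X_{23}))$ into its $\mathfrak{g}_{-\alpha}$ and $\mathfrak{g}_\alpha$ summands, applying the rewriting identity to the first (using $\theta X_{21} = X_{21}$ and $\theta X_{23} = -X_{23}$), yields $X_{23,\alpha} = X_{23} - X_{21}$ and $X_{23,\alpha}^+ = -X_{21}$; the parallel computation for $X_{32}$ produces $X_{32,\alpha}^+ = X_{12}$. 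The whole argument is bookkeeping: the only genuine care required is to track the factors of $\sqrt{-1}$ and the sign of $\theta$ on each $X_{ij}$, so no conceptual obstacle arises.
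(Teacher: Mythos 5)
Your proof is correct, and it is the same argument the paper intends: the paper's proof is literally the one line ``Straightforward computations using (\ref{rootspace})'', and your write-up just carries out that computation, correctly identifying $(\mathfrak{g}_{2\alpha})_{\mathbb{C}}=\mathbb{C}E_3$ with $E_3+\theta E_3=2\sqrt{-1}Z_{13}$ and $(\mathfrak{g}_{\alpha})_{\mathbb{C}}=\mathbb{C}(X_{12}+X_{32})+\mathbb{C}(X_{21}-X_{23})$, and then reading off the Iwasawa components via $Y=-\theta Y+(Y+\theta Y)$ for $Y\in\mathfrak{g}_{-\beta}$. All signs and factors of $\sqrt{-1}$ check out against the stated formulas.
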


\begin{proof}
Straightforward computations using (\ref{rootspace}).
\end{proof}

Now we are ready to calculate explicitly the Schmid operators. As a first step, they can be decomposed as

\begin{lemma} For the pair $(J, J^\vee)$, one has the following
\begin{align*}
&(I, II):\quad {\mathscr D}^{I, L}_\tau\phi= P^+_\tau\circ (\nabla^L_-\phi)+P^-_\tau \circ (\nabla^L_-\phi),\quad {\mathscr D}^{II, R}_{\tau^\vee}\phi=P^+_{\tau^\vee}\circ (\nabla^R_+\phi)+P^-_{\tau^\vee} \circ (\nabla^R_+\phi),\\
&(II, I):\quad {\mathscr D}^{II, L}_\tau\phi= P^+_\tau\circ( \nabla^L_+\phi)+P^-_\tau (\circ \nabla^L_+\phi),\quad {\mathscr D}^{I, R}_{\tau^\vee}\phi=P^+_{\tau^\vee}\circ ( \nabla^R_-\phi)+P^-_{\tau^\vee} \circ (\nabla^R_-\phi),\\
&(III, III):\quad{\mathscr D}^{III, L}_\tau\phi= P^-_\tau\circ (\nabla^L_+\phi)+P^-_\tau \circ (\nabla^L_-\phi),\quad {\mathscr D}^{III, R}_{\tau^\vee}\phi=P^-_{\tau^\vee}\circ (\nabla^R_+\phi)+P^-_{\tau^\vee} \circ (\nabla^R_-\phi),
\end{align*}
where $P^\pm_\tau$ are given by $($\ref{projector}$)$ and $P^\pm_{\tau^\vee}$ are defined in a similar way.
\end{lemma}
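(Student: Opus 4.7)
The claim is essentially a bookkeeping exercise: for each positive system $J$, I must identify which irreducible $K$-submodules of $V_\Lambda\otimes\frak{p}_{\mathbb{C}}$ belong to the target space $V^-_\Lambda=\bigoplus_{\beta\in\Delta^+_{J,n}}V_{\Lambda-\beta}$, and then read off $P^J_\tau$ in terms of the projectors $P^\pm_\tau$ of (\ref{projector}). The analysis on the right is identical after replacing $(\Lambda,J)$ by $(\Lambda^\vee,J^\vee)$.

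\emph{Step 1 (noncompact roots).} From the lists $\Delta^+_I,\Delta^+_{II},\Delta^+_{III}$ one reads off
$\Delta^+_{I,n}=\{e_1-e_3,\,e_2-e_3\}$, $\Delta^+_{II,n}=\{e_3-e_1,\,e_3-e_2\}$, $\Delta^+_{III,n}=\{e_1-e_3,\,e_3-e_2\}$. Recall that $\frak{p}_+=\mathbb{C}X_{13}+\mathbb{C}X_{23}$ carries the $\frak{t}$-weights $e_1-e_3$ and $e_2-e_3$, while $\frak{p}_-$ carries their negatives.

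\emph{Step 2 (identify the summands $V_{\Lambda-\beta}$).} Using the parametrization $r=\Lambda_1-\Lambda_2$, $s=\Lambda_2-\Lambda_3$, a direct computation of $(r',s')$ for each $V_{\Lambda-\beta}$ gives
\[
\tau^-_\Lambda=\begin{cases} V_{r+1,s-2}\oplus V_{r-1,s-1}, & J=I,\\
V_{r+1,s+1}\oplus V_{r-1,s+2}, & J=II,\\
V_{r-1,s-1}\oplus V_{r-1,s+2}, & J=III. \end{cases}
\]

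\emph{Step 3 (match with the Cartan decomposition).} Comparing with
$V_\Lambda\otimes\frak{p}_+\cong V_{r+1,s+1}\oplus V_{r-1,s+2}$ and
$V_\Lambda\otimes\frak{p}_-\cong V_{r+1,s-2}\oplus V_{r-1,s-1}$,
I conclude that
$V^-_\Lambda=V_\Lambda\otimes\frak{p}_-$ for $J=I$,
$V^-_\Lambda=V_\Lambda\otimes\frak{p}_+$ for $J=II$,
and $V^-_\Lambda$ is the image of $P^-_\tau$ applied separately to both $V_\Lambda\otimes\frak{p}_+$ and $V_\Lambda\otimes\frak{p}_-$ for $J=III$.

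\emph{Step 4 (assemble the Schmid operator).} Since $\nabla^L=\nabla^L_++\nabla^L_-$ with $\nabla^L_\pm$ taking values in $V_\Lambda\otimes\frak{p}_\pm$, Step~3 forces $P^J_\tau$ to kill one piece and act as $P^+_\tau+P^-_\tau$ on the other, yielding the three stated formulas. For instance when $J=I$, $P^I_\tau$ annihilates $V_\Lambda\otimes\frak{p}_+$ and is the identity on $V_\Lambda\otimes\frak{p}_-=V^-_\Lambda$, which in turn decomposes as $P^+_\tau+P^-_\tau$. The case $J=III$ is the one where both $\nabla^L_+$ and $\nabla^L_-$ contribute, but only via $P^-_\tau$.

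\emph{Step 5 (right operators).} Verbatim repetition with $\Lambda$ replaced by $\Lambda^\vee=(-\Lambda_2,-\Lambda_1,-\Lambda_3)$, positive system $J^\vee$, and projectors $P^\pm_{\tau^\vee}$ produces the right-hand formulas for $\mathscr{D}^{J^\vee,R}_{\tau^\vee}$. There is no conceptual obstacle; the only step requiring care is the bookkeeping in Steps~2--3 to make sure one has correctly paired each $V_{\Lambda-\beta}$ (resp.\ $V_{\Lambda^\vee-\beta}$) with the correct piece $V_{r\pm 1,\cdot}$ of the Cartan decomposition, which in particular singles out the case $J=III$ where the two noncompact positive roots lie on opposite sides.
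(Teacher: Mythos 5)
Your proof is correct and is exactly the routine weight bookkeeping that the paper leaves implicit (the lemma is stated there without proof): the identifications of $\Delta^+_{J,n}$, of the parameters $(r\pm1,\cdot)$ for each $V_{\Lambda-\beta}$, and the matching with $V_\Lambda\otimes\frak{p}_\pm\cong V_{r+1,s+1}\oplus V_{r-1,s+2}$ and $V_{r+1,s-2}\oplus V_{r-1,s-1}$ all check out. The only point worth making explicit in Step 4 is that the four summands are pairwise non-isomorphic (their $(r',s')$ parameters all differ), so the orthogonal projection $P^J_\tau$ onto any sub-sum is unambiguously the sum of the corresponding isotypic projectors, which justifies reading it off as $P^+_\tau+P^-_\tau$ or $P^-_\tau$ on each piece.
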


Hence for each case, $\phi_\lambda(a)$ is annihilated by four operators appearing in above decompositions.  However, it turns out that
\[
P^\pm_{\tau}\circ \nabla^L_{\varepsilon}\phi_\lambda(a)=0 \quad \textit{ is equivalent to } \quad P^\pm_{\tau^\vee}\circ \nabla^R_{-\varepsilon}\phi_\lambda(a)=0,
\]
where $\varepsilon=\pm$. By the lemma, this implies that for $U(2,1)$ one has
\[
{\mathscr D}^{J, L}_\tau\phi_\lambda(a)=0\quad \textit{ is equivalent to } \quad {\mathscr D}^{J^\vee, R}_{\tau^\vee}\phi_\lambda(a)=0.
\]
Then we only need to give our results for ${\mathscr D}^{J, L}_\tau$. In the sequel we put $a=a_t=\exp(tH)$ as in Section 3.1, and by abuse of notation write $\phi_\lambda(t)=\phi_\lambda(a_t)$, $c_i(t)=c_i(a_t)$ etc.

\begin{proposition}
The equation $P^+_\tau\circ \nabla^L_-\phi_\lambda(t)=0$ reads
\[
\sum^r_{i=0}\bigg[\bigg(-\frac{1}{2}\frac{d}{dt}-\frac{i+s}{2}\tanh t +i \coth t\bigg)c_i(t)+(r-i+1)(\sinh t)^{-1}c_{i-1}(t)\bigg]f^{r+1,s-2}_i\otimes f^{\Lambda^\vee}_{r-i}=0,
\]
hence implies that
\begin{equation}\label{+-}
\bigg(-\frac{1}{2}\frac{d}{dt}-\frac{i+s}{2}\tanh t +i \coth t\bigg)c_i(t)+(r-i+1)(\sinh t)^{-1}c_{i-1}(t)=0,\quad i=0,\ldots, r.
\end{equation}
\end{proposition}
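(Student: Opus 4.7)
The plan is to substitute the radial expansion
$\phi_\lambda(a_t)=\sum_{i=0}^r c_i(t)\,f^\Lambda_i\otimes f^{\Lambda^\vee}_{r-i}$
from \eqref{ci} into the operator equation
$P^+_\tau\circ\nabla^L_-\phi_\lambda(a_t)=0$, and then extract the scalar ODE by reading off the coefficient of each basis vector $f^{r+1,s-2}_i\otimes f^{\Lambda^\vee}_{r-i}$ of the target space $V_{r+1,s-2}\otimes V_{\Lambda^\vee}$. Since
$\nabla^L_-\phi_\lambda(a_t)=l_{X_{13}}\phi_\lambda(a_t)\otimes X_{31}+l_{X_{23}}\phi_\lambda(a_t)\otimes X_{32}$,
the computation splits into evaluating two left differentials on $A^+$ and then composing with the $K$-projector.

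For each left differential I would feed the Iwasawa data of Lemma~\ref{iwasawa1} into the master formula \eqref{lx}. For $X_{13}$ only the $A$-component $\tfrac12 H$ and the $2\alpha$-component $-\tfrac12 Z_{13}$ survive, which yields the term $-\tfrac12\tfrac{d}{dt}c_i(t)$ (using $l_H\phi_\lambda(t)=-\tfrac{d}{dt}\phi_\lambda(t)$) together with contributions carrying factors $(\sinh 2t)^{-1}$ and $\coth 2t$. For $X_{23}$ only the $\alpha$-component $-X_{21}$ survives, producing factors $(\sinh t)^{-1}$ and $\coth t$, and here the action of $X_{21}$ via \eqref{kaction} shifts the index $i\mapsto i-1$ and brings in the term $c_{i-1}(t)$. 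The $2\alpha$-contributions from $X_{13}$ must then be combined with the signs from the dual-basis identification \eqref{iso} using the identities $\coth 2t-(\sinh 2t)^{-1}=\tanh t$ and $\coth 2t+(\sinh 2t)^{-1}=\coth t$; this is what converts hyperbolic functions of $2t$ into the $\tanh t$ and $\coth t$ appearing in \eqref{+-}.

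The Cartan eigenvalue is computed from \eqref{kaction}: $\tau(Z_{13})=\tau(Z_{12})+\tau(Z_{23})$ acts on $f^\Lambda_i$ by $i+s$, so via \eqref{iso} the operator $\tau^\vee(Z_{13})$ contributes the coefficient $(i+s)$ on $f^{\Lambda^\vee}_{r-i}$, accounting for the $-\tfrac{i+s}{2}\tanh t$ term. Similarly $\tau_-(Z_{13})$ and $\tau_-(X_{21})$ act in the second tensor slot; under \eqref{ad} the elements $X_{31}, X_{32}$ correspond to $f^{1,-2}_0, -f^{1,-2}_1$, and the commutators $[Z_{13},X_{31}]$ and $[X_{21},X_{32}]$ are handled by the explicit bracket relations in $\frak{g}$. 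After all substitutions the result is a vector in $(V_\Lambda\otimes V_{1,-2})\otimes V_{\Lambda^\vee}$ to which I would apply $P^+_\tau$ via the projector $P^+_r(f^r_i\otimes f^1_e)=f^{r+1}_{i+e}$ of Lemma~\ref{proj}; this projection supplies the combinatorial factor $(r-i+1)$ attached to $c_{i-1}(t)$ in \eqref{+-}, coming from the multiplicity with which $f^{r+1}_i$ appears in the image of $f^r_{i-1}\otimes f^1_1$ under $P^+_r$ balanced against the action of $\tau_-$.

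The proposition then follows by linear independence of $\{f^{r+1,s-2}_i\otimes f^{\Lambda^\vee}_{r-i}\}_{0\le i\le r}$: each coefficient of the sum must vanish separately, giving the scalar recursion \eqref{+-}. The main obstacle is the bookkeeping, not any conceptual difficulty: each of the two left differentials $l_{X_{ij}}\phi_\lambda\otimes Y$ contributes four different pieces (the $A$-derivative, the right $\tau^\vee$-action, the tensor $\tau_-$-action, and the commutator shift in the $\frak{p}_-$ factor), and these must be added with the correct signs from \eqref{iso}, the correct hyperbolic coefficients from Lemma~\ref{iwasawa1}, and the correct multiplicities from Lemma~\ref{proj} before they collapse into the compact two-term recursion displayed in \eqref{+-}.
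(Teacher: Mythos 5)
Your plan is exactly the paper's proof: expand $\phi_\lambda(a_t)=\sum_i c_i(t)f^\Lambda_i\otimes f^{\Lambda^\vee}_{r-i}$, write $\nabla^L_-\phi_\lambda=l_{X_{13}}\phi_\lambda\otimes X_{31}+l_{X_{23}}\phi_\lambda\otimes X_{32}$, feed Lemma \ref{iwasawa1} into \eqref{lx}, project with Lemma \ref{proj}, and read off coefficients of $f^{r+1,s-2}_i\otimes f^{\Lambda^\vee}_{r-i}$; carried out faithfully this does yield \eqref{+-}.

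One attribution in your narrative is wrong and worth correcting before you execute the computation. You claim the factor $(r-i+1)$ on $c_{i-1}(t)$ is "supplied by the projection," as the multiplicity of $f^{r+1}_i$ in $P^+_r(f^r_{i-1}\otimes f^1_1)$. By Lemma \ref{proj} that multiplicity is exactly $1$: $P^+_r(f^r_{i-1}\otimes f^1_1)=f^{r+1}_i$ with no extra coefficient. The factor $(r-i+1)$ in fact comes from the $\tau^\vee$-term of \eqref{lx} with coefficient $(\sinh t)^{-1}$, namely $\tau_{\Lambda^\vee}(X_{21})f^{\Lambda^\vee}_{r-i+1}=(r-i+1)f^{\Lambda^\vee}_{r-i}$ on the contragredient factor $V_{\Lambda^\vee}=V_{r,-r-s}$ (this is consistent with your earlier, correct remark that the lowering operator $X_{21}$ produces the index shift $i\mapsto i-1$). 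Two smaller points: the $i\coth t$ term arises directly from the $\alpha$-root data of $X_{23}$ (as the combination $(i+1)\coth t$ from $\tau_{r+1,s-2}(X_{21})$ minus $\coth t$ from the commutator $[X_{21},X_{32}]=-X_{31}$), not from the identity $\coth 2t+(\sinh 2t)^{-1}=\coth t$, which is not needed here; and $\tau_{\Lambda^\vee}(Z_{13})$ acts on $f^{\Lambda^\vee}_{r-i}$ by $-(i+s)$, the sign being compensated by $X^+_{13,2\alpha}=-\tfrac12 Z_{13}$. None of this derails the argument, since the formulas you cite force the correct coefficients, but as stated the justification of the $(r-i+1)$ is not the right one.
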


\begin{proof} Since the projection $P^+_\tau$ is $K$-equivariant,  for $X\in\frak{k}_{\mathbb{C}}$, $Y\in\frak{p}_-$ one has
\[
P_\tau^+\big(\tau_-(X)(f^\Lambda_i \otimes Y)\big)=\tau_{r+1,s-2}(X)P_\tau^+(f^\Lambda_i\otimes Y).
\]
By Lemma \ref{proj}, (\ref{kaction}) (\ref{ad}) (\ref{lx}) and Lemma \ref{iwasawa1}, it follows that
\begin{eqnarray}
\label{x31}&& P^+_\tau \big(l_{X_{13}}\phi_\lambda(t)\otimes X_{31}\big)\\
&=&\frac{1}{2}P^+_\tau \sum_i \big\{\big[l_H-(\sinh 2t)^{-1}\tau^\vee(Z_{13})-\coth 2t \cdot \tau_-(Z_{13})\big]c_i(t) (f^\Lambda_i \otimes f^{1,-2}_0)\otimes f^{\Lambda^\vee}_{r-i}\nonumber\\
&& +\coth 2t \cdot c_i(t)f^\Lambda_i\otimes [Z_{13}, X_{31}] \otimes f^{\Lambda^\vee}_{r-i}\big\}\nonumber\\
&=& -\frac{1}{2}\sum_i \bigg[\frac{d}{dt}+(\sinh 2t)^{-1}\tau^\vee(Z_{13})+\coth 2t \big(\tau_{r+1,s-2}(Z_{13})+2\big)\bigg]c_i(t)f^{r+1,s-2}_i\otimes f^{\Lambda^\vee}_{r-i}
\nonumber\\
&=&-\frac{1}{2}\sum_i\bigg[\frac{d}{dt}-(i+s)(\sinh 2t)^{-1}+(i+s)\coth 2t\bigg]c_i(t)f^{r+1,s-2}_i\otimes f^{\Lambda^\vee}_{r-i}
\nonumber\\
&=&-\frac{1}{2}\bigg[\frac{d}{dt}+(i+s)\tanh t \bigg]c_i(t)f^{r+1,s-2}_i\otimes f^{\Lambda^\vee}_{r-i},\nonumber
\end{eqnarray}
noting that $Z_{13}$ acts as a scalar,  $X_{31}=f^{1,-2}_0$ and $[Z_{13}, X_{31}]=-2X_{31}=-2f^{1,-2}_0$.

Similarly, we have
\begin{align}
\label{x32}&~ P^+_\tau \big(l_{X_{23}}\phi_\lambda(t)\otimes X_{32}\big)\\
=&~ P^+_\tau\sum_i\big\{ \big[(\sinh t)^{-1}\tau^\vee(X_{21})+\coth t \cdot \tau_-(X_{21})\big]c_i(t)(f_i^\Lambda\otimes f^{1,-2}_1)\otimes f^{\Lambda^\vee}_{r-i} \nonumber\\
&~+\coth t \cdot c_i(t)f_i^\Lambda\otimes[X_{21}, X_{32}] \otimes f^{\Lambda^\vee}_{r-i}\big\}\nonumber\\
=&~ \sum_i \big\{\big[(\sinh t)^{-1}\tau^\vee(X_{21})+\coth t \cdot \tau_{r+1, s-2}(X_{21})]c_i(t) f^{r+1, s-2}_{i+1}\otimes f^{\Lambda^\vee}_{r-i}\nonumber\\
&~-\coth t \cdot c_i(t) f^{r+1, s-2}_i\otimes
f^{\Lambda^\vee}_{r-i}\big\}\nonumber\\
=&~ \sum_i \big[ i\coth t \cdot c_i(t) f_i^{r+1,s-2}\otimes f^{\Lambda^\vee}_{r-i} +(r-i)(\sinh t)^{-1} c_i(t) f^{r+1,s-2}_{i+1}\otimes f^{\Lambda^\vee}_{r-i-1}\big],\nonumber
\end{align}
noting that $X_{32}=-f^{1,-2}_1$ and $[X_{21}, X_{32}]=-X_{31}=-f^{1,-2}_0$. Adding (\ref{x31}) and (\ref{x32}), then (\ref{+-}) follows from the fact that the coefficient of
$f^{r+1,s-2}_i\otimes f^{\Lambda^\vee}_{r-i}$ is zero.
\end{proof}

Analogous calculations show that

\begin{proposition}
$P^-_\tau\circ \nabla^L_-\phi_\lambda(t)=0$ reads
\begin{equation}\label{--}
i\bigg(\frac{1}{2}\frac{d}{dt}+\frac{i+s}{2}\tanh t +(r+1-i)\coth t\bigg)c_i(t)+(r+1-i)^2(\sinh t)^{-1}c_{i-1}(t)=0;
\end{equation}
$P^+_\tau\circ \nabla^L_+\phi_\lambda(t)=0$ reads
\begin{equation}\label{++}
\bigg(-\frac{1}{2}\frac{d}{dt}+\frac{i+s}{2}\tanh t +(r-i) \coth t\bigg)c_i(t)+(i+1)(\sinh t)^{-1}c_{i+1}(t)=0;
\end{equation}
$P^-_\tau\circ \nabla^L_+\phi_\lambda(t)=0$ reads
\begin{equation}\label{-+}
(r-i)\bigg(\frac{1}{2}\frac{d}{dt}-\frac{i+s}{2}\tanh t +(i+1)\coth t\bigg)c_i(t)+(i+1)^2(\sinh t)^{-1}c_{i+1}(t)=0
\end{equation}
with $i=0,\ldots, r$.
\end{proposition}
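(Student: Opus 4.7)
The plan is to carry out, for each of the three remaining cases, exactly the same calculation that produced (\ref{+-}) in the previous proposition, with the relevant projection $P^\pm_\tau$ and gradient $\nabla^L_\pm$ substituted in. Concretely, I write
\[
\nabla^L_-\phi_\lambda(a_t) = l_{X_{13}}\phi_\lambda(t)\otimes X_{31} + l_{X_{23}}\phi_\lambda(t)\otimes X_{32},
\qquad
\nabla^L_+\phi_\lambda(a_t) = l_{X_{31}}\phi_\lambda(t)\otimes X_{13} + l_{X_{32}}\phi_\lambda(t)\otimes X_{23},
\]
apply (\ref{lx}) to expand each $l_X\phi_\lambda(t)\otimes Y$ using Lemma \ref{iwasawa1} to identify the $A$-, $\alpha$-, and $2\alpha$-components of $X_{13}, X_{23}, X_{31}, X_{32}$, and then push the $K$-equivariant projector $P^\pm_\tau$ through the $\tau_\pm$-action. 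Writing $\phi_\lambda(a_t)=\sum_i c_i(t)\, f^\Lambda_i\otimes f^{\Lambda^\vee}_{r-i}$ as in (\ref{ci}), the output is an expression in the basis $\{f^{r\pm 1, s+\delta}_j\otimes f^{\Lambda^\vee}_{r-i}\}$; setting the coefficient of each basis vector to zero yields the asserted ODEs.

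For $P^-_\tau\circ\nabla^L_-\phi_\lambda$ the target isotypic component is $V_{r-1,s-1}$, and Lemma \ref{proj} together with (\ref{ad}) gives
$P^-_\tau(f^\Lambda_i\otimes f^{1,-2}_0)=-i\,f^{r-1,s-1}_{i-1}$ and $P^-_\tau(f^\Lambda_i\otimes f^{1,-2}_1)=-(r-i)\cdot(-1)\,f^{r-1,s-1}_{i}$ (with the sign coming from $X_{32}=-f^{1,-2}_1$). Replacing the scalar action of $Z_{13}$ on $V_{r+1,s-2}$ by its scalar action on $V_{r-1,s-1}$, the $X_{13}$-term contributes $i\bigl(\tfrac12 \tfrac{d}{dt}+\tfrac{i+s}{2}\tanh t+(r+1-i)\coth t\bigr)c_i(t)$ after combining the $(\sinh 2t)^{-1}$ and $\coth 2t$ pieces via the half-angle identity $\coth 2t\pm(\sinh 2t)^{-1}=\coth t$ or $\tanh t$. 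The $X_{23}$-term produces the $(r+1-i)^2(\sinh t)^{-1}c_{i-1}(t)$ piece. Setting the coefficient of $f^{r-1,s-1}_{i-1}\otimes f^{\Lambda^\vee}_{r-i}$ to zero gives (\ref{--}).

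For the two $\nabla^L_+$ equations the computation is symmetric with $X_{31}\leftrightarrow X_{13}$, $X_{32}\leftrightarrow X_{23}$, $X_{21}\leftrightarrow X_{12}$. The relevant commutators $[Z_{13},X_{13}]=2X_{13}$ and $[X_{12},X_{23}]=X_{13}$ replace those used in the previous proposition, and Lemma \ref{proj} together with (\ref{ad}) gives the projections $P^+_\tau(f^\Lambda_i\otimes f^{1,1}_e)=f^{r+1,s+1}_{i+e}$ and $P^-_\tau(f^\Lambda_i\otimes f^{1,1}_e)=(r\delta_{e1}-i)f^{r-1,s+2}_{i-1+e}$. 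Reading off the coefficient of $f^{r+1,s+1}_{i+1}\otimes f^{\Lambda^\vee}_{r-i-1}$ in $P^+_\tau\circ\nabla^L_+\phi_\lambda(t)$ delivers (\ref{++}); the coefficient of $f^{r-1,s+2}_{i}\otimes f^{\Lambda^\vee}_{r-i-1}$ in $P^-_\tau\circ\nabla^L_+\phi_\lambda(t)$ delivers (\ref{-+}), with the outer factors $i$, $r-i$ emerging from the second clause of Lemma \ref{proj}.

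The main obstacle is purely bookkeeping: keeping the correct signs in (\ref{ad}), the correct scalar eigenvalues of $Z_{13}$ on each destination module $V_{r\pm 1,s+\delta}$ (which determine the combined tanh/coth coefficients), and the correct normalization in Lemma \ref{proj} when projecting $f^\Lambda_i\otimes f^{1,\ast}_e$. No new analytic input is needed beyond what was already used for (\ref{+-}); once the four coefficient identities are assembled on the model basis, the proposition follows by reading off each coefficient.
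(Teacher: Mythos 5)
Your proposal is correct and follows exactly the route the paper intends: the paper's own ``proof'' is just the phrase ``analogous calculations,'' and you have spelled out precisely those calculations, with the right ingredients from Lemma \ref{proj}, (\ref{ad}), (\ref{lx}) and Lemma \ref{iwasawa1}, and the correct target modules $V_{r-1,s-1}$, $V_{r+1,s+1}$, $V_{r-1,s+2}$. One small bookkeeping slip in your narrative: in the $P^-_\tau\circ\nabla^L_-$ case the $X_{13}$-term actually contributes only $i\bigl(\tfrac12\tfrac{d}{dt}+\tfrac{i+s}{2}\tanh t\bigr)c_i(t)$ (the $(\sinh 2t)^{-1}$, $\coth 2t$ and commutator pieces collapse to the single $\tanh t$ term), while the $i(r+1-i)\coth t\,c_i(t)$ piece comes from the $X_{23}$-term together with the $(r+1-i)^2(\sinh t)^{-1}c_{i-1}(t)$ piece; the assembled coefficient identity is nonetheless exactly (\ref{--}).
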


\subsection{Solutions using hypergeometric functions}

Let us give the explicit formulas for $c_i(t)$ and $\psi_\lambda(t)$ by solving differential equations.
For convenience we put
\[
\tilde{c}_i(t)=\langle f^\Lambda_i, f^{\Lambda^\vee}_{r-i}\rangle c_i(t)=(-1)^i{r\choose i}^{-1}c_i(t).
\]
Then $
\psi_\lambda(t)=\sum\limits^r_{i=0}\tilde{c}_i(t)$. We first deal with the (anti-)holomorphic discrete series.

\begin{theorem}\label{hol}
$\mathrm{(i)}$ For $\lambda\in\Xi_{I}$,
\[\tilde{c}_i(t)=(\cosh t)^{-i-s},\quad i=0,\ldots, r,\quad\psi_\lambda(t)=\sum^r_{i=0}(\cosh t)^{-i-s}.\]
$\mathrm{(ii)}$ For $\lambda\in \Xi_{II}$,
\[
\tilde{c}_i(t)=(\cosh t)^{i+s},\quad i=0,\ldots, r,\quad
\psi_\lambda(t)=\sum^r_{i=0}(\cosh t)^{i+s}.\]
\end{theorem}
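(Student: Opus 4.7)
The strategy is to exploit the fact that $\phi_\lambda$ lies in $\mathrm{Ker}(\mathscr{D}^{J,L}_\tau)$ for the relevant $J\in\{I,II\}$, so that its $A$-radial coefficients $c_i(t)$ must satisfy simultaneously both of the paired ODEs produced at the end of the previous subsection. For $\lambda\in\Xi_I$, equations (\ref{+-}) and (\ref{--}) both hold for every $i$; for $\lambda\in\Xi_{II}$, equations (\ref{++}) and (\ref{-+}) both hold. The point is that each pair can be combined linearly so as to annihilate simultaneously the derivative $c_i'(t)$ and the $\tanh t$ contribution, leaving a purely algebraic recursion in $t$ plus a separable first-order ODE to pin down the starting value.

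For the holomorphic case, dividing (\ref{--}) by $i$ and adding the result to (\ref{+-}) yields
\[
c_i(t)\;=\;-\frac{r-i+1}{i}\,(\cosh t)^{-1}\,c_{i-1}(t)\qquad (1\leq i\leq r),
\]
while taking $i=0$ in (\ref{+-}) (the $c_{-1}$ term is absent) reduces to $c_0'(t)+s\tanh t\cdot c_0(t)=0$, whence $c_0(t)=C(\cosh t)^{-s}$. The normalization $\phi_\lambda(1)=\mathrm{Id}_{V_\Lambda}$ together with the $K$-spherical symmetry of $\phi_\lambda$ forces $\tilde c_i(0)=1$ for every $i$, so $C=1$. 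Iterating the recursion produces $c_i(t)=(-1)^i\binom{r}{i}(\cosh t)^{-i-s}$; dividing through by $(-1)^i\binom{r}{i}$ gives $\tilde c_i(t)=(\cosh t)^{-i-s}$, and summation yields the claimed formula for $\psi_\lambda$.

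The anti-holomorphic case is entirely parallel. Adding $1/(r-i)$ times (\ref{-+}) to (\ref{++}) for $0\leq i\leq r-1$ produces the reversed recursion
\[
c_{i+1}(t)\;=\;-\frac{r-i}{i+1}\,(\cosh t)\,c_i(t),
\]
and (\ref{++}) at $i=0$, after substituting the just-derived expression for $c_1$, decouples to $c_0'(t)=s\tanh t\cdot c_0(t)$, giving $c_0(t)=(\cosh t)^s$ and hence $\tilde c_i(t)=(\cosh t)^{i+s}$.

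The main obstacle is organizational rather than conceptual: one must track the signs and binomial factors introduced by the passage $c_i\mapsto\tilde c_i$ and by the dualization (\ref{iso}), and one should verify that the initial condition really is $\tilde c_i(0)=1$ independently of $i$ (this follows from the $(\tau,\tau^\vee)$-spherical nature of $\phi_\lambda$ evaluated at the identity). Once the right linear combination of the two Schmid equations is identified, the remainder of the argument collapses to iterating an explicit algebraic recursion and integrating a single separable first-order ODE of type $f'=c\tanh t\cdot f$.
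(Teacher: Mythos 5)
Your argument is correct and follows essentially the same route as the paper: both proofs take linear combinations of the paired Schmid equations ((\ref{+-}) with (\ref{--}) for $\Xi_I$, (\ref{++}) with (\ref{-+}) for $\Xi_{II}$) to extract the one-step recursion $\tilde c_{i-1}=\cosh t\cdot\tilde c_i$ (resp. $\tilde c_{i+1}=\cosh t\cdot\tilde c_i$) together with a separable first-order ODE, and then impose $\tilde c_i(0)=1$. The only cosmetic difference is that the paper eliminates the off-diagonal term to get the decoupled equation $\tilde c_i'+(i+s)\tanh t\,\tilde c_i=0$ for every $i$ and integrates each one, whereas you integrate only the $i=0$ equation and propagate via the recursion; the two are equivalent rearrangements of the same system.
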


\begin{proof}
If $\lambda\in\Xi_{I}$, then we have (\ref{+-}) and (\ref{--}), which imply that
\begin{equation}
\label{dc1}
\left\{
\begin{aligned}
&\frac{d\tilde{c}_i}{dt}+(i+s)\tanh t \cdot \tilde{c}_i(t)=0,\\
& \tilde{c}_{i-1}(t)=\cosh t \cdot \tilde{c}_i(t).
\end{aligned}\right.
\end{equation}
By (\ref{iso}) we have the initial condition $\tilde{c}_i(0)=1$. Thus by separation of variables we get the desired solution for (\ref{dc1}).
If $\lambda\in\Xi_{II}$ then we have (\ref{++}) and (\ref{-+}), and the computation is similar.
\end{proof}

The rest of this subsection is devoted to finding $\psi_\lambda$ for middle discrete series, i.e. the case $\lambda\in\Xi_{III}$. Then we have
to solve (\ref{--}) and (\ref{-+}). As preparations let us recall some basic facts about Riemann's differential equation and
hypergeometric functions (cf. \cite{WW}). Riemann's $P$-symbol
\begin{equation}\label{riemann}
u=P\begin{bmatrix} a & b & c &\\ \alpha & \beta & \gamma & z\\  \alpha' & \beta' & \gamma' & \end{bmatrix}
\end{equation}
denotes that $u$ is a solution of the second-order differential equation in $z$
\begin{align*}
&\frac{d^2u}{dz^2}+\bigg[\frac{1-\alpha-\alpha'}{z-a}+\frac{1-\beta-\beta'}{z-b} + \frac{1-\gamma-\gamma'}{z-c}\bigg]\frac{du}{dz}\\
& + \bigg[\frac{\alpha\alpha'(a-b)(a-c)}{z-a}+\frac{\beta\beta'(b-a)(b-c)}{z-b}+\frac{\gamma\gamma'(c-a)(c-b)}{z-c}\bigg]\frac{u}{(z-a)(z-b)(z-c)}=0,\nonumber
\end{align*}
where $a, b, c$ are three distinct singularities, and $\alpha+\alpha'+\beta+\beta'+\gamma+\gamma'=1$. Besides the obvious symmetry with respect to parameters, Riemann's equation satisfies the transformation
\begin{equation}\label{tran}
P\begin{bmatrix} a & b & c &\\ \alpha & \beta & \gamma & z\\  \alpha' & \beta' & \gamma' & \end{bmatrix}=P\begin{bmatrix} a_1 & b_1 & c_1 &\\ \alpha & \beta & \gamma & z_1\\  \alpha' & \beta' & \gamma' & \end{bmatrix},
\end{equation}
where $z_1, a_1, b_1, c_1$ are derived from $z, a, b, c$ by the same fractional linear transformation, i.e. a transformation of the form
$x\mapsto \dfrac{Ax+B}{Cx+D}$.

\

An important class of special functions is the hypergeometric series
\begin{align*}
F(a,b,c,z)=&~\sum^\infty_{n=0}\frac{a(a+1)\cdots (a+n-1) b(b+1)\cdots (b+n-1)}{n!c(c+1)\cdots (c+n-1)}z^n,\\
=&~ \frac{\Gamma(c)}{\Gamma(a)\Gamma(b)}\sum^\infty_{n=0}\frac{\Gamma(a+n)\Gamma(b+n)}{n!\Gamma(c+n)}z^n,
\end{align*}
which converges absolutely for $|z|<1$, provided that $c$ is not zero or a negative integer. For $|z|\geq1$ it can be analytically continued along any path that avoids the branching points $0$ and $1$. Gua{\ss}'s theorem asserts that for $Re(c-a-b)>0$,
\[
F(a,b,c,1)=\frac{\Gamma(c)\Gamma(c-a-b)}{\Gamma(c-a)\Gamma(c-b)}.
\]
If $c-a-b$ is a negative integer, then $F(a,b,c,z)$ has a pole of order $a+b-c$ at $z=1$, which can be seen from the relation
\begin{equation}\label{1-z}
F(a,b,c,z)=(1-z)^{c-a-b}F(c-a,c-b,c,z).
 \end{equation}
 Hypergeometric series includes many familiar mathematical functions as special cases, e.g. $F(1,b,1,z)=(1-z)^{-b}$.

The derivative of hypergeometric series is given by
\begin{equation}\label{deri}
\frac{d}{dz}F(a,b,c,z)=\frac{ab}{c}F(a+1,b+1,c+1,z).
\end{equation}
The following lemma, which we collect for later use, is a typical example of Gau{\ss}'s contiguous relations.

\begin{lemma} \label{hplem}
If $b\neq 0$, then
\[
zF(a+1, b+1, c+1, z)=\frac{c}{b}\big(F(a+1,b,c,z)-F(a,b,c,z)\big).
\]
\end{lemma}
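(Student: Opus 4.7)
The plan is to prove the identity by direct comparison of power series coefficients on both sides, using the standard series representation
\[
F(a,b,c,z)=\sum_{n\geq 0}\frac{(a)_n(b)_n}{n!(c)_n}z^n,
\]
where $(a)_n=a(a+1)\cdots(a+n-1)$ denotes the Pochhammer symbol. Since everything is an absolutely convergent power series inside $|z|<1$, an identity of formal power series will give the identity of analytic functions, and then analytic continuation takes care of the general case.

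First I would compute the coefficient of $z^n$ on the left-hand side: for $n\geq 1$ it is $\frac{(a+1)_{n-1}(b+1)_{n-1}}{(n-1)!\,(c+1)_{n-1}}$, and for $n=0$ it is $0$. Next I would compute the coefficient of $z^n$ on the right-hand side, namely
\[
\frac{c}{b}\left(\frac{(a+1)_n(b)_n}{n!\,(c)_n}-\frac{(a)_n(b)_n}{n!\,(c)_n}\right)
=\frac{c(b)_n}{b\cdot n!\,(c)_n}\bigl[(a+1)_n-(a)_n\bigr].
\]
Both sides are visibly $0$ when $n=0$.

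The key computational step is the telescoping identity $(a+1)_n-(a)_n=n\,(a+1)_{n-1}$, which follows from factoring out the common product $(a+1)(a+2)\cdots(a+n-1)$ and noting that $(a+n)-a=n$. Combining this with the elementary shifts $(b)_n=b\,(b+1)_{n-1}$ and $(c)_n=c\,(c+1)_{n-1}$ reduces the right-hand coefficient to
\[
\frac{c\cdot b(b+1)_{n-1}\cdot n\,(a+1)_{n-1}}{b\cdot n!\,c(c+1)_{n-1}}
=\frac{(a+1)_{n-1}(b+1)_{n-1}}{(n-1)!\,(c+1)_{n-1}},
\]
matching the left-hand coefficient. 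The assumption $b\neq 0$ is exactly what is needed to divide, and $c$ being neither zero nor a negative integer is implicit so that all series converge.

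I do not expect any real obstacle: the entire argument is a one-line Pochhammer manipulation. One could alternatively derive the relation from the standard Gau\ss{} contiguous relations
\[
c\,F(a,b,c,z)-c\,F(a+1,b,c,z)+bz\,F(a+1,b+1,c+1,z)\cdot\tfrac{c}{c}=0
\]
by rewriting one of them, but the direct coefficient comparison above is the cleanest route and is what I would present.
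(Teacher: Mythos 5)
Your proof is correct and amounts to essentially the same computation as the paper's: the factor $\bigl(1+\frac{n}{a}\bigr)$ appearing in the paper's series manipulation is exactly your telescoping identity $(a+1)_n-(a)_n=n\,(a+1)_{n-1}$, the only cosmetic difference being that the paper packages the conclusion through the derivative formula (\ref{deri}) rather than comparing coefficients of $z^n$ directly. No issues.
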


\begin{proof}
By series expansion we have
\begin{align*}
F(a+1, b, c, z)=&~\frac{\Gamma(c)}{\Gamma(a+1)\Gamma(b)}\sum^\infty_{n=0}\frac{\Gamma(a+n+1)\Gamma(b+n)}{n!\Gamma(c+n)}z^n\\
=&~\frac{\Gamma(c)}{\Gamma(a)\Gamma(b)}\sum^\infty_{n=0}\bigg(1+\frac{n}{a}\bigg)\frac{\Gamma(a+n)\Gamma(b+n)}{n!\Gamma(c+n)}z^n\\
=&~F(a,b,c,z)+\frac{z}{a}\frac{d}{dz}F(a,b,c,z).
\end{align*}
The lemma follows from (\ref{deri}) immediately.
\end{proof}




$F(a,b,c,z)$ is a solution of the hypergeometric differential equation
\[
P\begin{bmatrix} 0 & \infty & 1 &\\ 0 & a & 0 & z\\  1-c & b & c-a-b & \end{bmatrix},
\]
which has three singularities $0$, $1$ and $\infty$.
Using transformation properties of Riemann's equation, it can be shown that a solution of (\ref{riemann}) is
\begin{equation}\label{soln}
\bigg(\frac{z-a}{z-b}\bigg)^\alpha\bigg(\frac{z-c}{z-b}\bigg)^\gamma F\bigg(\alpha+\beta+\gamma, \alpha+\beta'+\gamma, 1+\gamma-\gamma', \frac{(a-b)(z-c)}{(a-c)(z-b)}\bigg)
\end{equation}
provided that $\gamma-\gamma'$ is not a negative integer.

By switching parameters one can obtain Kummer's 24 solutions
of (\ref{riemann}) in terms of hypergeometric series, which can be further reduced to 6 linearly independent ones.
For example the solution (\ref{soln}) is given as $u_{17}$ in \cite[Chap XIV 14.3]{WW}.

Getting back to the matrix coefficients $\tilde{c}_i(t)$ and $\psi_\lambda(t)$, we now prove the following

\begin{theorem}\label{mid}
For $\lambda\in\Xi_{III}$,
\begin{align*}
\tilde{c}_i(t)=&~ (\cosh t)^{-2-i+s}F(1+i, 1-s, r+2, \tanh^2 t) \\
=&~(\cosh t)^{-2r-2+i-s}F(1+r-i, 1+r+s, r+2, \tanh^2 t),\quad i=0,\ldots, r,\\
 \psi_\lambda(t)=&~\sum^r_{i=0}\tilde{c}_i(t).
\end{align*}
\end{theorem}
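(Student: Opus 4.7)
The plan is to reduce the coupled first-order system \eqref{--}, \eqref{-+} (both of which must hold for $\lambda\in\Xi_{III}$) to a second-order ODE for each $\tilde c_i$, transform it into the hypergeometric equation by an appropriate change of variable, and identify the specific solution using the initial condition $\tilde c_i(0)=1$. The Euler identity (\ref{1-z}) will then yield the equivalent second form.

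First I would rewrite (\ref{--}) and (\ref{-+}) in terms of $\tilde c_i = (-1)^i\binom{r}{i}^{-1} c_i$. Using $\binom{r}{i-1}/\binom{r}{i} = i/(r+1-i)$ and $\binom{r}{i+1}/\binom{r}{i} = (r-i)/(i+1)$, and cancelling the common factors $i$ and $r-i$ which appear, these equations collapse to the clean shift relations
\begin{align*}
(r+1-i)(\sinh t)^{-1}\tilde c_{i-1} &= \tfrac{1}{2}\tilde c_i' + \tfrac{i+s}{2}\tanh t\,\tilde c_i + (r+1-i)\coth t\,\tilde c_i,\\
(i+1)(\sinh t)^{-1}\tilde c_{i+1} &= \tfrac{1}{2}\tilde c_i' - \tfrac{i+s}{2}\tanh t\,\tilde c_i + (i+1)\coth t\,\tilde c_i.
\end{align*}
Substituting $\tilde c_{i+1}$ from the second relation into the first relation applied at index $i+1$ yields a linear second-order ODE for $\tilde c_i$ alone. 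After the substitution $z = \tanh^2 t$ and the gauge change $\tilde c_i(t) = (\cosh t)^{-2-i+s}u(z)$, a direct computation (in which the apparent $\coth t$ singularity at $t=0$ is absorbed by the prefactor) collapses the equation to the hypergeometric equation with parameters $a = 1+i$, $b = 1-s$, $c = r+2$; equivalently it is Riemann's $P$-equation with singularities moved to $z=0,1,\infty$.

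The indicial exponents at $z=0$ are $0$ and $1-c = -(r+1)$, and the initial value $\tilde c_i(0)=1$ (which itself follows from $c_i(0) = \langle h^{\Lambda^\vee}_{r-i}, h^\Lambda_i\rangle = (-1)^i\binom{r}{i}$ obtained via (\ref{iso})) forces the regular solution $u(z) = F(1+i, 1-s, r+2, z)$. This gives the first formula. The second formula is the Euler transformation (\ref{1-z}) applied with $(a,b,c)=(1+i,1-s,r+2)$: since $c-a-b = r-i+s$ and $1-\tanh^2 t = (\cosh t)^{-2}$,
\[
F(1+i, 1-s, r+2, \tanh^2 t) = (\cosh t)^{-2(r-i+s)} F(r+1-i, r+1+s, r+2, \tanh^2 t),
\]
and multiplying by the original prefactor $(\cosh t)^{-2-i+s}$ produces $(\cosh t)^{-2r-2+i-s}$, matching the second form. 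The identity $\psi_\lambda(t) = \sum_{i=0}^r \tilde c_i(t)$ follows at once from (\ref{canonical}), (\ref{ci}) and $\langle f^\Lambda_i, f^{\Lambda^\vee}_{r-i}\rangle = (-1)^i\binom{r}{i}^{-1}$.

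The main obstacle will be the algebra of reducing the coupled shift relations to the second-order ODE and verifying that the gauge transform together with $z = \tanh^2 t$ produces precisely the hypergeometric coefficients: the $\coth t$, $\tanh t$ and $(\sinh t)^{-1}$ terms must conspire to cancel all apparent poles at $t=0$ and to leave coefficients of exactly the shape $z(1-z)$, $(r+2)-\bigl((1+i)+(1-s)+1\bigr)z$, $-(1+i)(1-s)$. Once this is verified, the identification of the hypergeometric solution is immediate from uniqueness of the solution regular at the origin with $u(0) = 1$.
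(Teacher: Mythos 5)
Your proposal is correct and follows essentially the same route as the paper: reduce the coupled Schmid-operator recursions (\ref{--}), (\ref{-+}) to a second-order Riemann/hypergeometric equation for each $\tilde c_i$, pick out the right solution by its analytic behavior, and deduce the second expression from the Euler transformation (\ref{1-z}). The one genuine (and slightly cleaner) deviation is the identification step: the paper pins down $\tilde c_i$ by its behavior at \emph{both} endpoints --- regularity and nonvanishing at $t=0$ together with vanishing as $t\to\infty$, the latter using $r+s>0$ for $\lambda\in\Xi_{III}$ --- whereas you use only analyticity at $t=0$ plus $\tilde c_i(0)=1$, which indeed suffices because the second exponent of the hypergeometric equation at $\tanh^2 t=0$ is the negative integer $-(r+1)$, so any solution independent of $F(1+i,1-s,r+2,\cdot)$ has a pole or logarithmic singularity there; to make this airtight you should also record that $\tilde c_i$ is an even real-analytic function of $t$ (conjugating $a_t$ by $\mathrm{diag}(-1,-1,1)\in K$), hence genuinely analytic in the variable $\tanh^2 t$ at the origin.
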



\begin{proof} (Sketch) Introduce a variable $z=(\cosh t)^{-2}$. Combining (\ref{--}) and (\ref{-+}), after a bunch of messy computations one can derive that $\tilde{c}_i$ satisfies the Riemann's equation
 \begin{equation}\label{P1}
 P\begin{bmatrix} 0 & \infty &  1 & \\ 1+\dfrac{i-s}{2} &  \dfrac{i+s}{2} & 0 &  z \\ r+1-\dfrac{i-s}{2} & -\dfrac{i+s}{2} & -r-1 &
 \end{bmatrix}.
 \end{equation}
Let $z_1=\dfrac{1-z}{1+z}$, then by transformation rule (\ref{tran}) we see that $\tilde{c}_i$ satisfies
\[
P\begin{bmatrix} 1 & -1 & 0 & \\ 1+\dfrac{i-s}{2} &  \dfrac{i+s}{2} & 0 &  z_1 \\ r+1-\dfrac{i-s}{2} & -\dfrac{i+s}{2} & -r-1 &
 \end{bmatrix}.
\]
Thus from (\ref{soln}) we get a solution of (\ref{P1})
\begin{align}
\label{soln1} &~\bigg(\frac{z_1-1}{z_1+1}\bigg)^{1+(i-s)/2}F\bigg(1+i, 1-s, r+2, \frac{2z_1}{z_1+1}\bigg)\\
=&~(-z)^{1+(i-s)/2}F\bigg(1+i, 1-s, r+2, 1-z\bigg)\nonumber\\
=&~(-1)^{1+(i-s)/2} (\cosh t)^{-2-i+s}F(1+i, 1-s, r+2, \tanh^2 t).\nonumber
\end{align}
Since $\tanh^2 t<1$, the solution has a convergent series expansion. We claim that, up to constant it is the unique solution
of (\ref{P1}) that is regular and nonvanishing at $z=1$, and vanishing at $z=0$, corresponding to $t=0$ and $t=\infty$ respectively.
To see the vanishing at $z=0$, note that for $\lambda\in\Xi_{III}$ one has $\lambda=\Lambda$, $r+s=\lambda_1-\lambda_3>0$. Thus if $F(1+i, 1-s, r+2, 1-z)$ has a pole at $z=0$, the order is at most
\[
i-s-r < \frac{i-s}{2} < 1+\frac{i-s}{2}.
\]
This implies the claim. Since the matrix coefficient $\tilde{c}_i(t)$ has the same analytic properties as stated in above claim, it must be a multiple
of (\ref{soln1}) satisfying the condition
$\tilde{c}_i(0)=1$. 

This proves the first formula for $\tilde{c}_i(t)$. The second one follows from (\ref{1-z}), or from switching the parameters $\{\alpha,\beta\}$ with $\{\alpha',\beta'\}$ in (\ref{P1}).
\end{proof}

The two equivalent formulas for $\tilde{c}_i(t)$ in Theorem \ref{mid} can be used for different cases in Section 5.

\subsection{General formulas}

We close this section by giving the full formulas for the matrix coefficients of discrete series $\pi_\lambda$ of $U(2,1)$.
Most of previous notations and results, with slight modifications when necessary, still work in this case. In particular,
the representation $\tau_\Lambda$ of $S(U(2)\times U(1))$ extends to $U(2)\times U(1)$ and we have (see Section 3.2)
\[
\tau_\Lambda\cong \big(\rho_r\otimes\det{}^{\Lambda_2}\big)\boxtimes\chi^{\Lambda_3}.
\]
We still use the basis $f^\Lambda_i$, $i=0,\ldots, r$ of $V_\Lambda$ and the matrix coefficient $\psi_\lambda$, whose $A$-radial part is
\[
\psi_\lambda(t)=\sum^r_{i=0}c_i(t)\langle f^\Lambda_i, f^{\Lambda^\vee}_{r-i}\rangle.
\]
We also write $g=k'^{-1}a_tk$ with $k$, $k'$ being parameterized as in Section 3.3. Then by spherical properties of $\phi_\lambda$ we get
\[
\psi_\lambda(g)=\psi_\lambda(kk'^{-1}a_t)=\sum^r_{i=0}c_i(t)\langle\tau(kk'^{-1})f^\Lambda_i, f^{\Lambda^\vee}_{r-i}\rangle=\sum^r_{i=0}\tilde{c}_i(t)\langle\tau(kk'^{-1})f^\Lambda_i, h^\Lambda_i\rangle.
\]
The function $\tilde{c}_i(t)$ is given in Section 4.4, and we have the following straightforward computation for the pairing $\langle\tau(kk'^{-1})f^\Lambda_i, h^\Lambda_i\rangle$.

We have
\begin{align*}
kk'^{-1}=&~\textrm{diag}(1,\zeta,1)\kappa(\theta)\textrm{diag}(\xi\bar{\xi}',\eta\bar{\eta}',\gamma\bar{\gamma}')\kappa(-\theta')\textrm{diag}(1,\bar{\zeta}',1)\\
=&~\begin{bmatrix}
a & \bar{\zeta}'b & 0\\
\zeta c & \zeta\bar{\zeta}'d & 0\\
0 & 0 & \gamma\bar{\gamma}'
\end{bmatrix},
\end{align*}
where
\begin{align*}
&a=\xi\bar{\xi}'\cos\theta\cos\theta'+\eta\bar{\eta}'\sin\theta\sin\theta',\\
&b=-\xi\bar{\xi}'\cos\theta\sin\theta'+\eta\bar{\eta}'\sin\theta\cos\theta',\\
&c=-\xi\bar{\xi}'\sin\theta\cos\theta'+\eta\bar{\eta}'\cos\theta\sin\theta',\\
&d=\xi\bar{\xi}'\sin\theta\sin\theta'+\eta\bar{\eta}'\cos\theta\cos\theta'.
\end{align*}
Recall that by our convention $f^\Lambda_i=x^i y^{r-i}$, with dual basis $h^\Lambda_i$. Thus
\begin{align*}
&\langle\tau(kk'^{-1})f_i^\Lambda, h^\Lambda_i\rangle\\
=&~\big(\zeta\bar{\zeta}'\xi\bar{\xi}'\eta\bar{\eta}'\big)^{\Lambda_2}\big(\gamma\bar{\gamma}'\big)^{\Lambda_3}\langle (ax+\zeta cy)^i(\bar{\zeta}'bx+\zeta\bar{\zeta}'dy)^{r-i}, h^\Lambda_i\rangle \\
=&~\big(\zeta\bar{\zeta}'\big)^{\Lambda_1-i}\big(\xi\bar{\xi}'\eta\bar{\eta}'\big)^{\Lambda_2}\big(\gamma\bar{\gamma}'\big)^{\Lambda_3}\sum^i_{l=0}{i\choose l}{r-i \choose i-l}a^l(bc)^{i-l}d^{r-2i+l}.
\end{align*}
In summary, we may write 
\begin{equation}\label{pairing}
\psi_\lambda(g)=\sum^r_{i=0}\tilde{c}_i(t)\big(\zeta\bar{\zeta}'\big)^{\Lambda_1-i}\big(\xi\bar{\xi}'\eta\bar{\eta}'\big)^{\Lambda_2}\big(\gamma\bar{\gamma}'\big)^{\Lambda_3}\sum^i_{l=0}{i\choose l}{r-i \choose i-l}a^l(bc)^{i-l}d^{r-2i+l}.
\end{equation}
We notice the following useful identity
\begin{equation}\label{id}
\sum^i_{l=0}{i\choose l}{r-i \choose i-l}={r\choose i},
\end{equation}
which is a special case of 
\[
\sum_{l=0}^{j}{m \choose l} {n\choose j-l} ={m+n \choose j}.
\]

\section{Explicit values of zeta integrals}

In this section we shall apply the results from previous sections to calculate the zeta integrals on $G=U(2,1)$.
As in the introduction, let $\pi=\pi_\lambda$ be a discrete series of $\widetilde{G}$ with minimal $\widetilde{K}$-type $\sigma$ and Harish-Chandra parameter $\lambda$, $\pi'=\theta(\pi^\vee)$ be the theta lifting of $\pi^\vee$
with minimal $\widetilde{K}'$-type $\sigma'$, where $\pi^\vee$ is the contragredient of $\pi$ with minimal $\widetilde{K}$-type $\sigma^\vee$. In Section 2 we write $(\sigma, \sigma')$ for the pair of weights under theta correspondence, but now we change to  $(\sigma^\vee, \sigma')$ and we hope this change of notation does not cause much confusion. Recall that we have chosen a nonzero vector $\phi\in\mathscr{H}_{\sigma^\vee,\sigma'}\subset \mathscr{H}$ in the space of joint harmonics.  Let $\psi_{\pi}=\psi_\lambda$ be the canonical matrix coefficient of $\pi$ defined in the fashion of (\ref{canonical}). Then our purpose is to calculate the explicit value of the zeta integral
\begin{equation}\label{zeta}
\int_G(\omega(g)\phi,\phi)\cdot \psi_{\pi}(g)dg.
\end{equation}
Note that both $(\omega(g)\phi,\phi)$ and $\psi_\pi(g)$ are functions on $\widetilde{G}$, but their product descends to $G$, so above integration (\ref{zeta}) makes sense.

In this section, by abuse of notation  we write $\sigma$ for 
both the Blatter parameter $\Lambda$ and the minimal $K$-type $\tau_\Lambda$ in Section 4. For convenience we also put $\sigma=(\Lambda_1, \Lambda_2, \Lambda_3)$, then we have the two parameters $r, s$ defined by (\ref{rs}). The space of Harish-Chandra parameters for $\widetilde{G}$ is
\[
\widetilde{\Xi}_c=\{\lambda=(\lambda_1,\lambda_2,\lambda_3)\in \sqrt{-1}\frak{t}^*: \lambda_j\equiv\frac{1}{2}\textrm{ mod }\mathbb{Z}, ~\lambda\textrm{ is }\Delta\textrm{-regular and }\Delta_c^+\textrm{-dominant}\}.
\]
Put $\widetilde{\Xi}_J=\{\lambda\in\widetilde{\Xi}_c: \lambda\textrm{ is }\Delta_J^+\textrm{-dominant}\}$. Then our formulas for the matrix coefficients of discrete series in Section 4 still hold, up to a factor of $\pm1$.

Let $\hat{G}$ denote the Langlands dual group of $G$ and fix a Gal$(\mathbb{C}/\mathbb{R})$-splitting $(\mathcal{B},\mathcal{T}, \{\mathcal{X}_{\check{\alpha}}\}_{\check{\alpha}\in\check{\Delta}})$ of $\hat{G}$, where $\check{\Delta}$ is the set of roots of $\mathcal{T}$ in $\hat{G}$.
Then $\mathcal{B}$ determines a set of positive roots $\check{\Delta}^+$.
By Corollary of  \cite[Lemma 23.1]{H-C}, up to a constant which depends on the choice of the Haar measure on $G$, the discrete series $\pi_\lambda$ has formal degree
\begin{equation}\label{fdegree}
d(\pi_\lambda)=\prod_{\check{\alpha}\in\check{\Delta}^+}\big|\langle\lambda,\check{\alpha}\rangle\big|.
\end{equation}
It is obviously independent of the choice of the splitting. For the measure on $G=U(2,1)$ chosen in Section 3.1, we have exactly (\ref{fdegree}). More precisely, $\hat{G}=GL_3(\mathbb{C})$, and if
$\lambda=(\lambda_1,\lambda_2,\lambda_3)$ then
\[
d(\pi_\lambda)=|(\lambda_1-\lambda_2)(\lambda_2-\lambda_3)(\lambda_1-\lambda_3)|.
\]

We have the following results from \cite{HLS}. If $G$ or $G'$ is compact, then by Schur's lemma for any $\phi\in\mathscr{H}_{\sigma^\vee,\sigma'}$,
\begin{equation}\label{cpt}
\int_G (\omega(g)\phi,\phi) \cdot \psi_{\pi}(g)dg =\frac{\|\phi\|^2}{d(\pi_\lambda)}.
\end{equation}
In general, let $P_\pi: \omega\to \sigma^\vee\otimes \pi'$ be the orthogonal projection, then
\begin{equation}\label{ncpt}
\int_G(\omega(g)\phi,\phi) \cdot \psi_{\pi}(g)dg =\frac{\|P_\pi\phi\|^2}{d(\pi_\lambda)}.
\end{equation}
In \cite{L1} it is proved that $P_\pi$ is nonzero on $\mathscr{H}_{\sigma^\vee, \sigma'}$. Essentially, our results give the ratio $\|P_\pi\phi\|/\|\phi\|$, which is a positive constant independent of the choice of $\phi$, due to Schur's lemma. Note that this is the constant denoted by $c_{\psi, V,\lambda}$ in the introduction.

Now we  state and prove the main results of this section. Because of (\ref{cpt}), we are primarily interested in the situation where $G'$ is non-compact, i.e. computing (\ref{ncpt}) explicitly for cases (C) and (D). But we remark that our treatment is unform and it is not hard to verify (\ref{cpt}) for cases (A) and (B) using the formulas in Subsection 3.3 and Section 4. Due to the discrepancy between (anti-)holomorphic and middle discrete series, and also between the corresponding matrix coefficients of
the Fock model, we sketch the calculations for (C1) and (C2) case by case. Those for case (D) are very similar and will be omitted.

\begin{thmA}
Assume that $\sigma=(-\mu_2-\frac{3}{2}, -\mu_1-\frac{3}{2}, \nu+\frac{3}{2})$. Then
$\lambda=(-\mu_2-\frac{1}{2}, -\mu_1-\frac{3}{2}, \nu+\frac{1}{2})\in \widetilde{\Xi}_{ II}$, 
\[
\int_G(\omega(g)\phi,\phi)\cdot \psi_{\pi}(g)dg=\frac{\|\phi\|^2}{(\mu_1-\mu_2+1)(\mu_1+\nu+2)(\mu_2+\nu+1)}.
\]
 \end{thmA}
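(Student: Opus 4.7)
The plan is to reduce the theorem to the Schur's-lemma identity \eqref{cpt}, which is available precisely when the dual partner $G'$ is compact. I expect no integration will actually be carried out; the whole proof is a matter of matching conventions and then computing the formal degree.

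First, I would identify which of the cases (A)--(D) of Section 3.2 applies. Since in Section 5 the vector $\phi$ is chosen in $\mathscr{H}_{\sigma^\vee,\sigma'}$, its $\widetilde{K}$-type is the contragredient of $\sigma$, namely
\[
\sigma^\vee = \bigg(\mu_1+\frac{3}{2},\,\mu_2+\frac{3}{2},\,-\nu-\frac{3}{2}\bigg),
\]
which is precisely the weight appearing in case (A) with the same parameters $\mu_1\geq\mu_2\geq 0$ and $\nu\geq 0$. Hence $G' = U(3,0) = U(3)$ is compact and $V'$ is anisotropic.

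Next, I would verify the assertion $\lambda\in\widetilde{\Xi}_{II}$ together with the match $\Lambda=\sigma$. Using $\Delta_{II}^+ = \{e_3-e_1,\,e_1-e_2,\,e_3-e_2\}$ one computes $\rho_{II} = e_3-e_2$ and $2\rho_c = e_1-e_2$, so that the Blatter shift is $\rho_{II}-2\rho_c = -e_1+e_3$. Adding this to $\lambda = (-\mu_2-\tfrac{1}{2},\,-\mu_1-\tfrac{3}{2},\,\nu+\tfrac{1}{2})$ recovers $\Lambda = \sigma$ exactly. The $\Delta_{II}^+$-dominance of $\lambda$ amounts to the three inequalities $\mu_1-\mu_2+1>0$, $\mu_2+\nu+1>0$, $\mu_1+\nu+2>0$, all automatic from $\mu_1\geq\mu_2\geq 0$, $\nu\geq 0$.

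Finally, since $G'$ is compact, \eqref{cpt} gives
\[
\int_G(\omega(g)\phi,\phi)\,\psi_\pi(g)\,dg = \frac{\|\phi\|^2}{d(\pi_\lambda)},
\]
and the formal degree is read off \eqref{fdegree}: with $\widehat{G}=GL_3(\mathbb{C})$ and $\check{\Delta}^+ = \{e_i-e_j:\,i<j\}$,
\[
d(\pi_\lambda) = |\lambda_1-\lambda_2|\,|\lambda_2-\lambda_3|\,|\lambda_1-\lambda_3| = (\mu_1-\mu_2+1)(\mu_1+\nu+2)(\mu_2+\nu+1),
\]
which is the claimed denominator. The main hazard is purely bookkeeping: tracking the convention change from $\sigma$ in Sections 2--3 to $\sigma^\vee$ in Section 5, and correctly converting between the Harish-Chandra parameter $\lambda$ and the Blatter/highest-weight parameter $\Lambda=\sigma$. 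Once these alignments are confirmed, the theorem follows with no computation in the Fock model and no use of the Schmid-operator analysis of Section 4.
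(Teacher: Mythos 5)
Your proposal is correct and is exactly the route the paper intends: for case (A) the partner $G'=U(3,0)$ is compact, so the theorem reduces to the Schur's-lemma identity (\ref{cpt}) together with the formal degree formula (\ref{fdegree}), and your bookkeeping (the identification $\sigma^\vee=(\mu_1+\tfrac32,\mu_2+\tfrac32,-\nu-\tfrac32)$, the shift $\rho_{II}-2\rho_c=(-1,0,1)$ giving $\Lambda=\sigma$, the $\Delta_{II}^+$-dominance, and $d(\pi_\lambda)=(\mu_1-\mu_2+1)(\mu_1+\nu+2)(\mu_2+\nu+1)$) all checks out. The paper itself gives no separate proof of Theorem A beyond this remark, merely noting that one could alternatively verify it directly from the Fock-model and Schmid-operator formulas.
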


\begin{thmB}
Assume that $\sigma=(\nu_1+\frac{3}{2},\nu_2+\frac{3}{2},-\alpha-\frac{3}{2})$. Then
$\lambda=(\nu_1+\frac{3}{2}, \nu_2+\frac{1}{2}, -\alpha-\frac{1}{2})\in \widetilde{\Xi}_{ I}$, 
\[
\int_G(\omega(g)\phi,\phi) \cdot \psi_{\pi}(g)dg=\frac{\|\phi\|^2}{(\nu_1-\nu_2+1)(\nu_1+\alpha+2)(\nu_2+\alpha+1)}.
\]
\end{thmB}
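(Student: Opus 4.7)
The plan is to observe that Theorem B lies in case (A) of Section 3.2, where $G'=U(3,0)=U(3)$ is compact, so the integral is handled by the Schur-lemma identity (\ref{cpt}) from \cite{HLS} combined with the formal-degree formula (\ref{fdegree}). The work is almost entirely in matching parameters, and I expect no serious obstacle; the anisotropic case is precisely the one for which $c_{\psi,V,\lambda}=1$.

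First I would verify that $\lambda=(\nu_1+\tfrac{3}{2},\nu_2+\tfrac{1}{2},-\alpha-\tfrac{1}{2})$ lies in $\widetilde{\Xi}_I$: the three positive-root pairings $\lambda_1-\lambda_2=\nu_1-\nu_2+1$, $\lambda_2-\lambda_3=\nu_2+\alpha+1$, and $\lambda_1-\lambda_3=\nu_1+\alpha+2$ are all strictly positive under the standing hypothesis $\nu_1\ge\nu_2\ge 0$, $\alpha\ge 0$. Next I would compute the Blatter parameter: with $\rho_I=(1,0,-1)$ and $2\rho_c=(1,-1,0)$, one has
\[
\Lambda=\lambda+\rho_I-2\rho_c=(\nu_1+\tfrac{3}{2},\nu_2+\tfrac{3}{2},-\alpha-\tfrac{3}{2})=\sigma,
\]
so $\pi_\lambda$ is exactly the discrete series whose minimal $\widetilde{K}$-type underlies the case (A) choice of $\phi$ in Section 3.2. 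Since $G'$ is compact, (\ref{cpt}) immediately gives the integral as $\|\phi\|^2/d(\pi_\lambda)$, and by (\ref{fdegree})
\[
d(\pi_\lambda)=|(\lambda_1-\lambda_2)(\lambda_2-\lambda_3)(\lambda_1-\lambda_3)|=(\nu_1-\nu_2+1)(\nu_2+\alpha+1)(\nu_1+\alpha+2),
\]
yielding the claimed formula (and confirming item (1) of Theorem 1.1).

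For completeness, and to match the paper's promise of a uniform derivation, I would also sketch a direct verification of (\ref{cpt}) in this case by plugging (\ref{focka}) and Theorem \ref{hol}(i) (with parameter $s=\nu_2+\alpha+3$) into the zeta integral and integrating against the Haar measure (\ref{da})--(\ref{k}). Orthogonality of $U(1)$-characters on the torus factors of $K\times K$ kills all but one term of the sum (\ref{pairing}); the residual binomial sum in the expansion of (\ref{focka}) collapses via Lemma \ref{comb}; and the remaining $t$-integral against $D(t)=\sinh^2 t\sinh 2t$ reduces to an elementary beta-type evaluation whose denominator reproduces $d(\pi_\lambda)$. The only place in this alternative approach requiring care is the combinatorial cancellation on the $K$-factors, but it yields nothing beyond (\ref{cpt}).
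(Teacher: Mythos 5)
Your main argument is correct and is exactly the paper's: since $G'$ is compact, the integral equals $\|\phi\|^2/d(\pi_\lambda)$ by the Schur-lemma identity (\ref{cpt}), and the formal degree (\ref{fdegree}) evaluates to $(\nu_1-\nu_2+1)(\nu_2+\alpha+1)(\nu_1+\alpha+2)$; the paper itself disposes of Theorems A and B in precisely this way, reserving the detailed integration for the non-compact cases (C) and (D). One correction: Theorem B corresponds to case (B) of Section 3.2 (so $G'=U(0,3)$ and the relevant matrix coefficient is (\ref{fockb})), not case (A) — in Section 5 the symbol $\sigma$ denotes the minimal $\widetilde K$-type of $\pi$, while the harmonic $\phi$ lies in the $\sigma^\vee=(-\nu_2-\tfrac32,-\nu_1-\tfrac32,\alpha+\tfrac32)$ isotypic space, which is the weight labelled $\sigma$ in case (B). This mix-up is harmless for the stated formula, since both $U(3,0)$ and $U(0,3)$ are compact, but it would matter for identifying $V'$ and for your supplementary direct verification, which should use (\ref{fockb}) rather than (\ref{focka}).
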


\begin{thmC1} Assume that $\sigma=(-\mu_2-\frac{1}{2},-\mu_1-\frac{1}{2}, -\alpha+\frac{1}{2})$. Then we have three subcases:
\\
(C1, I) If $\alpha\geq \mu_1+4$, then $\lambda=(-\mu_2-\frac{1}{2},-\mu_1-\frac{3}{2}, -\alpha+\frac{3}{2})\in\widetilde{\Xi}_{ I}$,
\[
\int_G(\omega(g)\phi,\phi) \cdot \psi_{\pi}(g)dg=\frac{\|\phi\|^2}{(\mu_1-\mu_2+1)(\alpha-1)\alpha}.
\]
(C1, II) If $\mu_2\geq \alpha+2$, then
$\lambda=\left(-\mu_2+\frac{1}{2},-\mu_1-\frac{1}{2}, -\alpha-\frac{1}{2}\right)\in\widetilde{\Xi}_{ II}$,
\[
\int_G(\omega(g)\phi,\phi) \cdot \psi_{\pi}(g)dg=\frac{\|\phi\|^2}{(\mu_1-\mu_2+1)(\mu_2+\alpha)(\mu_1+\alpha+1)}.
\]
(C1, III) If $\mu_1\geq\alpha\geq \mu_2+2$, then
 $\lambda=\sigma\in\widetilde{\Xi}_{ III}$,
\[
\int_G(\omega(g)\phi,\phi) \cdot \psi_{\pi}(g)dg=\frac{\|\phi\|^2}{(\mu_1-\mu_2+1)(\mu_1+1)\alpha}.
\]
\end{thmC1}


\begin{proof} We have $r=\mu_1-\mu_2$, $s=\alpha-\mu_1-1$. Combining previous results, we see that the zeta integral equals
\[
\int^\infty_0\int_{K \times  K} ( \omega(a_t k)\phi, \omega(k')\phi)\sum^r_{i=0}\tilde{c}_i(t)\langle\sigma(kk'^{-1})f_i^{\sigma}, h^\sigma_i\rangle dk  dk' D(t)dt,
\]
where $( \omega(a_t k)\phi, \omega(k')\phi)$, $\langle\sigma(kk'^{-1})f_i^{\sigma}, h^\sigma_i\rangle$ and $D(t)$ are given by (\ref{fockc1}), (\ref{pairing}) and (\ref{da}) respectively. We only need to consider the terms in the integrand that have nonzero contributions to the integral. Due to the simple fact $\int_{U(1)} u^n du=\delta_{n,0}$, we only need to keep the terms where $\eta\bar{\eta}'$ has zero exponent. By comparing the exponent of $\eta\bar{\eta}'$ in (\ref{fockc1}) and (\ref{pairing}), we see that the zeta integral is reduced to
\begin{align*}
&\sum^r_{i=0}\int^\infty_0(\cosh t)^{-\mu_2-\alpha-3}\tilde{c}_i(t)D(t) dt \int_{[0,\pi/2]^2}\int_{U(1)^2} (\cosh^{-1}t \cos \theta \cos \theta' + \zeta\bar{\zeta}'\sin \theta \sin \theta')^r\\
&\hspace{1cm} \times (\zeta\bar{\zeta}')^{-i} {r\choose i} (\sin\theta\sin\theta')^i (\cos\theta\cos\theta')^{r-i}\sin2\theta\sin2\theta' d\zeta d\zeta' d\theta d\theta'  \|\phi\|^2.
\end{align*}
Similarly, by considering the exponent of $\zeta\bar{\zeta}'$, above integral reads
\[
\sum^r_{i=0}\int^\infty_0(\cosh t)^{i-\mu_1-\alpha-3}\tilde{c}_i(t)D(t)dt \cdot {r\choose i}^2\bigg(\int_0^{\pi/2}\sin^{2i}\theta \cos^{2(r-i)}\theta\sin 2\theta d\theta\bigg)^2\|\phi\|^2.
\]
Using beta functions we see that
\[
\int_0^{\pi/2}\sin^{2i}\theta \cos^{2(r-i)}\theta\sin 2\theta d\theta=\frac{i! (r-i)!}{(r+1)!}.
\]
Hence the zeta integral is equal to
\begin{equation}\label{c11}
\frac{1}{(r+1)^2}\sum^r_{i=0}\int^\infty_{0}(\cosh t)^{i-\mu_1-\alpha-3}\tilde{c}_i(t)D(t)dt \|\phi\|^2.
\end{equation}
Note that $D(t)dt=\sinh^2 t\sinh 2t dt= (\cosh^2 t-1) d\cosh^2 t$, for $n>2$ one has
\[
\int^\infty_0 (\cosh t)^{-2n}D(t)dt= \frac{1}{(n-2)(n-1)}.
\]
We now consider the three types of discrete series.

(C1, I) With $\tilde{c}_i(t)=(\cosh t)^{-i-s}$ given by Theorem \ref{hol} (i), (\ref{c11})  equals
\[
\frac{1}{(r+1)^2}\sum^r_{i=0}\int^\infty_0(\cosh t)^{-2\alpha-2}D(t)dt \|\phi\|^2=\frac{1}{(r+1)(\alpha-1)\alpha}\|\phi\|^2.
\]

(C1, II) With $\tilde{c}_i(t)=(\cosh t)^{i+s}$ given by Theorem \ref{hol} (ii), (\ref{c11}) equals
\begin{align*}
&~\frac{1}{(r+1)^2}\sum^r_{i=0}\int^\infty_0 (\cosh t)^{2i-2\mu_1-2\alpha-4}D(t)dt\|\phi\|^2 \\
=&~\frac{1}{(r+1)^2}\sum^r_{i=0} \frac{1}{(\mu_1-i+\alpha)(\mu_1-i+\alpha+1)}\|\phi\|^2\\
=&~\frac{1}{(r+1)^2}\bigg(\frac{1}{\mu_2+\alpha}-\frac{1}{\mu_1+\alpha+1}\bigg)\|\phi\|^2\\
=&~\frac{1}{(r+1)(\mu_2+\alpha)(\mu_1+\alpha+1)}\|\phi\|^2.
\end{align*}

(C1, III) Using the first formula for $\tilde{c}_i(t)$ in Theorem \ref{mid}, (\ref{c11}) equals
\begin{align*}
&~\frac{1}{(r+1)^2}\sum^r_{i=0}\int^\infty_0(\cosh t)^{-2\mu_1-6}F(1+i, 1-s, r+2, \tanh^2 t)D(t)dt\|\phi\|^2\\
=&~\frac{1}{(r+1)^2}\sum^r_{i=0}\int^1_0 (1-z)^{\mu_1}zF(1+i,1-s, r+2,z) dz  \|\phi\|^2\\
=&-\frac{1}{s(r+1)}\sum^r_{i=0}\int^1_0 (1-z)^{\mu_1} \big(F(1+i,-s,r+1,z)-F(i,-s,r+1,z)\big)dz\|\phi\|^2\\
=&-\frac{1}{s(r+1)}\int^1_0(1-z)^\mu_1\big(F(r+1,-s,r+1,z)-1\big)dz\|\phi\|^2\\
=&-\frac{1}{s(r+1)}\int^1_0\big((1-z)^{\mu_1+s}-(1-z)^{\mu_1}\big)dz\|\phi\|^2\\
=&~\frac{1}{(r+1)(\mu_1+1)(\mu_1+s+1)}\|\phi\|^2,
\end{align*}
where we make change of variable $z=\tanh^2 t$ in the first equality, and apply Lemma \ref{hplem} in the second equality.

The theorem follows from substituting $r=\mu_1-\mu_2$, $s=\alpha-\mu_1-1$.
\end{proof}

\begin{thmC2} Assume that  $\sigma=(\nu-\frac{1}{2},-\mu-\frac{1}{2}, \beta+\frac{1}{2})$. Then we have two subcases:
\\
(C2, II) If $\beta\geq \nu+2$, then $\lambda=(\nu+\frac{1}{2}, -\mu-\frac{1}{2}, \beta-\frac{1}{2})\in\widetilde{\Xi}_{ II}$, 
\[
\int_G(\omega(g)\phi,\phi) \cdot \psi_{\pi}(g)dg=\frac{\|\phi\|^2}{(\nu+\mu+1)(\beta+\mu+1)\beta}.
\]
(C2, III) If $\nu\geq \beta+2$, then $\lambda=\sigma\in\widetilde{\Xi}_{ III}$, 
\[
\int_G(\omega(g)\phi,\phi) \cdot \psi_{\pi}(g)dg=\frac{\|\phi\|^2}{\nu(\nu+\mu+1)(\beta+\mu+1)}.
\]
\end{thmC2}

\begin{proof} We have $r=\mu+\nu$, $s=-\mu-\beta-1$. In this case we use (\ref{fockc2}). Similar to the proof of Theorem C1, considering the exponents of $\eta\bar{\eta}'$ yields the following
iterated integral
\begin{align}\label{c22}
&\sum^r_{i=0}{r\choose i}\int^\infty_0 (\cosh t)^{-\beta-3}\tilde{c}_i(t)D(t)dt\\
&\qquad\times\int_{[0,\pi/2]^2} J_i(\theta,\theta',t) (\sin\theta\sin\theta')^i (\cos\theta\cos\theta')^{r-i}\sin2\theta\sin2\theta'd\theta d\theta' \|\phi\|^2,\nonumber
\end{align}
where
\begin{align*}
J_i(\theta,\theta',t)=&\int_{U(1)^2} (\cosh^{-1} t \cos \theta \cos \theta' + \zeta\bar{\zeta}'\sin \theta \sin \theta')^\mu\\
&\hspace{1cm}\times (\cosh^{-1}t \sin\theta \sin \theta' + \bar{\zeta}\zeta'\cos\theta \cos\theta' )^\nu(\zeta\bar{\zeta}')^{\nu-i}d\zeta d\zeta'\\
=&\sum^\mu_{j=0}(\cosh t)^{2j-\mu-i}{\mu\choose j}{\nu\choose i-j}(\sin\theta\sin\theta')^i(\cos\theta\cos\theta')^{r-i}.
\end{align*}
After integrating $\theta$ and $\theta'$,  (\ref{c22}) reads
\begin{equation}\label{c22'}
\frac{r!}{[(r+1)!]^2}\sum^r_{i=0}\sum^\mu_{j=0}  {\mu\choose j}{\nu\choose i-j}i!(r-i)!\int^\infty_0(\cosh t)^{2j-\mu-i-\beta-3}\tilde{c}_i(t)D(t)dt \|\phi\|^2.
\end{equation}

(C2, II) With $\tilde{c}_i(t)=(\cosh t)^{i+s}$ given by Theorem \ref{hol} (ii), (\ref{c22'}) equals
\begin{align*}
&~\frac{r!}{[(r+1)!]^2}\sum^r_{i=0}\sum^\mu_{j=0}  {\mu\choose j}{\nu\choose i-j}i!(r-i)!\int^\infty_0(\cosh t)^{2j-2\mu-2\beta-4}D(t)dt \|\phi\|^2\\
=&~\frac{r!}{[(r+1)!]^2}\sum^\mu_{j=0}{\mu\choose j}\sum^r_{i=0}{\nu\choose i-j}i!(r-i)!\frac{1}{(\mu-j+\beta)(\mu-j+\beta+1)}\|\phi\|^2.
\end{align*}
Making a change of variable $k=i-j$, by Lemma \ref{comb} for $j=0,\ldots,\mu$ we have
\[
{\mu\choose j}\sum^r_{i=0}{\nu\choose i-j}i!(r-i)!={\mu\choose j}\sum^\nu_{k=0}{\nu\choose k}(k+j)!(\nu-k+\mu-j)!=\frac{(r+1)!}{\mu+1}.
\]
Hence we end up with the value
\begin{align*}
&~\frac{1}{(r+1)(\mu+1)}\sum^\mu_{j=0}\frac{1}{(\mu-j+\beta)(\mu-j+\beta+1)}\|\phi\|^2\\
=&~\frac{1}{(r+1)(\mu+1)}\bigg(\frac{1}{\beta}-\frac{1}{\beta+\mu+1}\bigg)\|\phi\|^2\\
=&~\frac{1}{(r+1)(\beta+\mu+1)\beta}\|\phi\|^2.
\end{align*}

(C2, III) Using the second formula for $\tilde{c}_i(t)$ in Theorem \ref{mid}, (\ref{c22'}) equals
\begin{align*}
&\frac{r!}{[(r+1)!]^2}\sum^r_{i=0}\sum^\mu_{j=0}  {\mu\choose j}{\nu\choose i-j}i!(r-i)!\\
&\quad\times \int^\infty_0(\cosh t)^{2j-2r-4}F(1+r-i,1+r+s,r+2, \tanh^2 t)D(t)dt \|\phi\|^2.
\end{align*}
Making changes of variables $k=i-j$, $z=\tanh^2 t$, above equals $I(\mu,\nu,\beta)\|\phi\|^2$, where
\begin{align*}
I(\mu,\nu,\beta)=&\frac{r!}{[(r+1)!]^2}\sum^\mu_{j=0}\sum^\nu_{k=0}{\mu\choose j}{\nu\choose k} (j+k)!(r-j-k)!\\
&\times\int^1_0 (1-z)^{r-1-j} z F(1+r-j-k, 1+r+s, r+2,z)dz.
\end{align*}
To evaluate $I(\mu,\nu,\beta)$, we recall  Euler's integral representation for hypergeometric functions
\[
\mathrm{B}(a,c-a)F(a,b,c,z)=\int^1_0 x^{a-1}(1-x)^{c-a-1}(1-zx)^{-b}dx,\quad Re(c)>Re(a)>0,
\]
provided $|z|<1$ or $|z|=1$ and both sides converge. Here B$(\cdot,\cdot)$ is the beta function. Applying this formula we obtain
\begin{align*}
I(\mu,\nu,\beta)&=\frac{1}{r+1}\sum^\mu_{j=0}\sum^\nu_{k=0}{\mu \choose j}{\nu \choose k}\int^1_0\int^1_0(1-z)^{r-1-j}z x^{r-j-k}(1-x)^{j+k}(1-zx)^{-1-r-s}dxdz\\
&=\frac{1}{r+1}\int^1_0\int^1_0(1-z)^{r-1}zx^r\left(1+\frac{1-x}{x(1-z)}\right)^\mu \left(1+\frac{1-x}{x}\right)^\nu (1-zx)^{-1-r-s}dxdz\\
&=\frac{1}{r+1}\int^1_0\int^1_0 (1-z)^{\nu-1}z(1-zx)^{\beta+\mu-\nu}dxdz.
\end{align*}
This double integral can be evaluated easily, although there are two different cases according to  whether $\beta+\mu-\nu$ equals $-1$ or not. In both cases one finds that
\[
I(\mu,\nu,\beta)=\frac{1}{\nu(\nu+\mu+1)(\beta+\mu+1)}.
\]
This finishes the calculation in the case (C2, III).
\end{proof}

\begin{thmD1} Assume that $\sigma=(\nu_1+\frac{1}{2},\nu_2+\frac{1}{2}, \beta-\frac{1}{2})$. Then we have three subcases:
\\
(D1, I) If $\nu_2\geq\beta+2$, the $\lambda=(\nu_1+\frac{1}{2},\nu_2-\frac{1}{2},\beta+\frac{1}{2})\in\widetilde{\Xi}_{I}$,
\[
\int_G(\omega(g)\phi,\phi)\cdot \psi_{\pi}(g)dg=\frac{\|\phi\|^2}{(\nu_1-\nu_2+1)\nu_2(\nu_1+1)}.
\]
(D1, II) If $\beta\geq \nu_1+4$, then $\lambda=(\nu_1+\frac{3}{2},\nu_2+\frac{1}{2},\beta-\frac{3}{2})\in\widetilde{\Xi}_{ II}$,
\[
\int_G(\omega(g)\phi,\phi)\cdot \psi_{\pi}(g)dg=\frac{\|\phi\|^2}{(\nu_1-\nu_2+1)(\beta-1)\beta}.
\]
(D1, III) If $\nu_1\geq\beta\geq\nu_2+2$, then $\lambda=\sigma\in\widetilde{\Xi}_{ III}$,
\[
\int_G(\omega(g)\phi,\phi) \cdot \psi_{\pi}(g)dg=\frac{\|\phi\|^2}{(\nu_1-\nu_2+1)(\nu_1+1)\beta}.
\]
\end{thmD1}

\begin{thmD2} Assume that $\sigma=(\nu+\frac{1}{2},-\mu+\frac{1}{2}, -\alpha-\frac{1}{2})$. Then we have two subcases:
\\
(D2, I) If $\alpha\geq\mu+2$, then $\lambda=(\nu+\frac{1}{2},-\mu-\frac{1}{2}, -\alpha+\frac{1}{2})\in \widetilde{\Xi}_{ I}$,
\[
\int_G(\omega(g)\phi,\phi) \cdot \psi_{\pi}(g)dg=\frac{\|\phi\|^2}{(\mu+\nu+1)(\alpha+\nu+1)\alpha}.
\]
\\
(D2, III) If $\mu\geq\alpha+2$, then $\lambda=\sigma\in\widetilde{\Xi}_{ III}$,
\[
\int_G(\omega(g)\phi,\phi) \cdot \psi_{\pi}(g)dg=\frac{\|\phi\|^2}{\mu(\mu+\nu+1)(\alpha+\nu+1)}.
\]
\end{thmD2}


\begin{thebibliography}{CERP}


\bibitem[F-J]{F-J} M. Flensted-Jensen, {\it Discrete series for semisimple symmetric spaces}, Ann. of Math. {\bf 111}  (1980), 253-311.

\bibitem[F]{F} G. B. Follan, {\it Harmonic Analysis in Phase Space}, Princeton University Press, 1989.


\bibitem[H-C]{H-C} Harish-Chandra, {\it Harmonic analysis on real reductive groups, III. The Maass-Selberg relations and the Plancherel formula},
Ann. of Math. {\bf 104} (1976), 117-201.

\bibitem[H1]{H1} M. Harris, {\it Cohomological automorphic forms on unitary groups, I: rationality of the 
theta correspondence}, Proc. Symp. Pure Math, {\bf 66.2} (1999), 103-200.

\bibitem[H2]{H2} M. Harris, {\it Cohomological automorphic forms on unitary groups, II: period relations
and values of $L$-functions}, in Harmonic Analysis, Group Representations, Automorphic Forms
and Invariant Theory, Lect. Notes Series Vol. {\bf 12}, Institute of Math. Sci., National Univ. of Singapore (2007), 89-150.

\bibitem[HKS]{HKS} M. Harris, S. Kudla and W. J. Sweet, {\it Theta dichotomy for unitary groups},
J. Amer. Math. Soc. {\bf 9} (1996), 941-1004.

\bibitem[HLS]{HLS} M. Harris, J.-S. Li and B. Sun, {\it Theta correspondence for close unitary groups}, Arithmetic geometry and automorphic forms, 265-307, Adv. Lect. Math. {\bf 19}, Int. Press, Somerville, MA, 2011.

\bibitem[Ha]{Ha} T. Hayata, {\it Differential equations for principal series Whittaker functions on $SU(2,2)$},  Indag. Math. (N.S.) {\bf 8} (1997), no. 4, 493-528.

\bibitem[HaKO]{HaKO} T. Hayata, H. Koseki and T. Oda, {\it Matrix coefficients of the middle discrete series of $SU(2,2)$}, J. Funct.  Anal. {\bf 185} (2001), no. 1, 297-341.

\bibitem[Ho]{Ho} R. Howe, {\it Transcending classical invariant theory}, J. Amer. Math. Soc. {\bf 2} (1989), no. 3, 535-552.

\bibitem[K]{K} A. W. Knapp, {\it Representation Theory of Semisimple Groups: An Overview based on Examples}, Princeton University Press, Princeton, 1986.

\bibitem[L1]{L1} J.-S. Li, {\it Theta lifting for unitary representations with nonzero cohomology}, Duke. Math. J. {\bf 61} (1990), no. 3, 913-937.

\bibitem[L2]{L2} J.-S. Li, {\it Nonvanishing theorems for the cohomology of certain arithmetic quotients}, J. Reine Angew. Math. {\bf 428} (1992), 177-217.

\bibitem[Lin]{Lin} B. Lin, {\it On the archimedean local zeta integrals and $L$-factors}, HKUST Thesis (2011).

\bibitem[P]{P} A. Paul, {\it Howe correspondence for real unitary groups}, J. Funct. Anal. {\bf 159} (1998), 384-431.

\bibitem[S]{S} W. Schmid, {\it On the realization of the discrete series of a semisimple Lie group}, Rice Univ. Studies {\bf 56} (1970), no. 2, 99-108.

\bibitem[T]{T} K. Taniguchi, {\it Discrete series Whittaker functions of $SU(n,1)$ and $Spin(2n,1)$}, J. Math. Sci. Univ. Tokyo {\bf 3} (1996), 331-377.

\bibitem[Tsu]{Tsu} M. Tsuzuki, {\it Real Shintani functions on $U(n,1)$}, J. Math. Sci. Univ. Tokyo {\bf 8} (2001), 609-688.

\bibitem[WW]{WW} E. T. Whittaker and G. N. Watson, {\it A Course of Modern Analysis}, 4th ed., Cambridge University Press, London, 1927.

\bibitem[Y]{Y} H. Yamashita, {\it Embeddings of discrete series into induced representations of semisimple Lie groups, I, General theory and the case
of $SU(2,2)$}, Japan. J. Math. (N.S.) {\bf 16} (1990), no. 1, 31-95.

\end{thebibliography}
\end{document}